\newcommand{\angles}[1]{\langle #1 \rangle}
\newcommand{\half}{\frac{1}{2}}
\newcommand{\abs}[1]{\vert #1 \vert}
\newcommand{\norm}[1]{\left\Vert #1 \right\Vert}
\newcommand{\R}{\mathbb{R}}
\begin{document} 
\newtheorem{prop}{Proposition}[section]
\newtheorem{Def}{Definition}[section]
\newtheorem{theorem}{Theorem}[section]
\newtheorem{lemma}{Lemma}[section]
 \newtheorem{Cor}{Corollary}[section]

\title[Yang-Mills in Lorenz gauge]{\bf Low regularity local well-posedness for the Yang-Mills equation in Lorenz gauge}
\author[Hartmut Pecher]{
{\bf Hartmut Pecher}\\
Fakult\"at f\"ur  Mathematik und Naturwissenschaften\\
Bergische Universit\"at Wuppertal\\
Gau{\ss}str.  20\\
42119 Wuppertal\\
Germany\\
e-mail {\tt pecher@math.uni-wuppertal.de}}
\date{}

\begin{abstract}
We prove that the Yang-Mills equation in Lorenz gauge in the (n+1)-dimensional case is locally well-posed for data of the gauge potential in $H^s$ and the curvature in $H^r$ , where $s >\frac{n}{2}-\frac{7}{8}$ , $r > \frac{n}{2}-\frac{7}{4}$ , if $n \ge 4$, and $ s > \frac{3}{4}$ , $ r > - \frac{1}{8}$ , if $n=3$. The proof is based on the fundamental results of Klainerman-Selberg \cite{KS} and on the null structure of most of the nonlinear terms detected by Selberg-Tesfahun \cite{ST} and Tesfahun \cite{Te}.
\end{abstract}
\maketitle
\renewcommand{\thefootnote}{\fnsymbol{footnote}}
\footnotetext{\hspace{-1.5em}{\it 2010 Mathematics Subject Classification:} 
35Q40, 35L70 \\
{\it Key words and phrases:} Yang-Mills,  
local well-posedness, Lorenz gauge}
\normalsize 
\setcounter{section}{0}

\section{Introduction}

\noindent 
Let $\mathcal{G}$ be the Lie group $SO(n,\mathbb{R})$ (the group of orthogonal matrices of determinant 1) or $SU(n,\mathbb{C})$ (the group of unitary matrices of determinant 1) and $g$ its Lie algebra $so(n,\mathbb{R})$ (the algebra of trace-free skew symmetric matrices) or $su(n,\mathbb{C})$ (the algebra of trace-free skew hermitian matrices) with Lie bracket $[X,Y] = XY-YX$ (the matrix commutator). 
For given  $A_{\alpha}: \mathbb{R}^{1+n} \rightarrow g $ we define the curvature $F=F[A]$ by
\begin{equation}
\label{curv}
 F_{\alpha \beta} = \partial_{\alpha} A_{\beta} - \partial_{\beta} A_{\alpha} + [A_{\alpha},A_{\beta}] \, , 
\end{equation}
where $\alpha,\beta \in \{0,1,...,n\}$ and $D_{\alpha} = \partial_{\alpha} + [A_{\alpha}, \cdot \,]$ .

Then the Yang-Mills system is given by
\begin{equation}
\label{1}
D^{\alpha} F_{\alpha \beta}  = 0
\end{equation}
in Minkowski space $\mathbb{R}^{1+n} = \mathbb{R}_t \times \mathbb{R}^n_x$ , where $n \ge 3$, with metric $diag(-1,1,...,1)$. Greek indices run over $\{0,1,...,n\}$, Latin indices over $\{1,...,n\}$, and the usual summation convention is used.  
We use the notation $\partial_{\mu} = \frac{\partial}{\partial x_{\mu}}$, where we write $(x^0,x^1,...,x^n)=(t,x^1,...,x^n)$ and also $\partial_0 = \partial_t$.

Setting $\beta =0$ in (\ref{1}) we obtain the Gauss-law constraint
\begin{equation}
\nonumber
\partial^j F_{j 0} + [A^j,F_{j0} ]=0 \, .
\end{equation}

The total energy for YM, at time $t$,  is given by
$$
  \mathcal E(t) =\sum_{0\le \alpha, \beta\le n} \int_{\R^n} \abs{F_{\alpha \beta}(t,x)}^2  \, dx,
$$
and is conserved for a smooth solution decaying sufficiently fast at spatial infinity, 
i.e., $$ \mathcal E(t)= \mathcal E(0).$$

The system is gauge invariant. Given a sufficiently smooth function $U: {\mathbb R}^{1+n} \rightarrow \mathcal{G}$ we define the gauge transformation $T$ by $T A_0 = A_0'$ , 
$T(A_1,...,A_n) = (A_1',...,A_n'),$ where
\begin{align*}
A_{\alpha} & \longmapsto A_{\alpha}' = U A_{\alpha} U^{-1} - (\partial_{\alpha} U) U^{-1}  \, . 
\end{align*}
It is  well-known that if  $(A_0,...A_n)$ satisfies (\ref{curv}),(\ref{1}) so does $(A_0',...,A_n')$.

Hence we may impose a gauge condition. We exclusively study the Lorenz gauge $\partial^{\alpha}A_{\alpha} =0$. Other convenient gauges are the Coulomb gauge $\partial^j A_j=0$ and the temporal gauge $A_0 =0$. It is well-known that for the low regularity well-posedness problem for the Yang-Mills equation a null structure for some of the nonlinear terms plays a crucial role.This was first detected by Klainerman and Machedon \cite{KM}, who proved global well-posedness in the case of three space dimensions in temporal and in Coulomb gauge in energy space. The corresponding result in Lorenz gauge, where the Yang-Mills equation can be formulated as a system of nonlinear wave equations, was shown by Selberg and Tesfahun \cite{ST}, who discovered 
that also in this case some of the nonlinearities have a null structure. This allows to rely on some of the methods that were previously used for the Maxwell-Dirac equation in \cite{AFS1} and the Maxwell-Klein-Gordon equation in \cite{ST1}. Tesfahun \cite{Te} improved the local well-posedness result to data without finite energy, namely for $(A(0),(\partial_t A)(0) \in H^s \times H^{s-1}$ and $(F(0),(\partial_t F)(0) \in H^r \times H^{r-1}$ with $s > \frac{6}{7}$ and $r > -\frac{1}{14}$, by discovering an additional partial null structure. Local well-posedness in energy space was also given by Oh \cite{O} using a new gauge, namely the Yang-Mills heat flow. He was also able to shows that this solution can be globally extended \cite{O1}. Tao \cite{T1} showed local well-posedness for small data in $H^s \times H^{s-1}$ for $ s > \frac{3}{4}$ in temporal gauge. Tao's result was generalized to space dimensions $n \ge 3$ by the author \cite{P}. In space dimension $n$ the critical regularity with respect to scaling is $s= \frac{n}{2}-1$ . In the case $n=4$ where the energy space is critical. Klainerman and Tataru \cite{KT} proved  small data local well-posedness for a closely related model problem in Coulomb gauge for $s>1$. Klainerman and Selberg \cite{KS} treated the local well-posedness problem with minimal regularity for some systems of nonlinear wave equations. Especially, they showed local well-posedness for a model problem related to the Yang-Mills system in the almost critical region, where $s > \frac{n}{2} -1$. Recently the result \cite{KT} was significantly improved by Krieger and Tataru \cite{KrT}, who were able to show global well-posedness for data with small energy. Sterbenz \cite{St} considered also the four-dimensional case in Lorenz gauge and proved global well-posedness for small data in Besov space $\dot{B}^{1,1} \times \dot(B)^{0,1}$. In high space dimension $n \ge 6$ (and $n$ even)  Krieger and Sterbenz \cite{KrSt} proved global well-posedness for small data in the critical Sobolev space.

In the present paper we consider the low regularity local-wellposedness problem for large data for the Yang-Mills system in Lorenz gauge and space dimension $n \ge 3$ . In the case $n\ge 4$ our main result is local well-posedness for $s > \frac{n}{2}-\frac{7}{8}$ and $r > \frac{n}{2} -\frac{7}{4}$ , in the case $n=3$ we obtain local well-posedness for $s > \frac{3}{4}$ and $r >-\frac{1}{8}$, where existence holds in $ A \in C^0([0,T],H^s) \cap C^1([0,T],H^{s-1}) \, , \, F \in C^0([0,T],H^r) \cap C^1([0,T],H^{r-1}) $ and (existence and) uniqueness in a certain subspace (Theorem \ref{Theorem1.1} and Corollary \ref{Cor}). In the case $n=3$ this is an improvement of Tesfahun's  result \cite{Te}, whereas in the case $n \ge 4$ the results for the full Yang-Mills system in Lorenz gauge and large data are also new.
 Crucial for this result are on one hand the methods developed in the papers by Selberg-Tesfahun \cite{ST} and Tesfahun \cite{Te}, especially their detection of the null structure in most - unfortunately not all - the  critical nonlinear terms. On the other hand we have to consider in space dimensions $n \ge 4$ a more sophisticated solution space, where we rely on the methods by Klainerman and Selberg \cite{KS} for a model problem for Yang-Mills, which ignores the gauge condition. We modify their solution space appropriately and show that its main properties are preserved. We were unable to come down to the critical value $s= \frac{n}{2}-1$ , which is prevented mainly by one of the nonlinear terms, for which no null structure is known and which leads to the estimate (\ref{32}). \cite{ST} and \cite{Te} used solution spaces of wave-Sobolev type $H^{s,b}$ , which are closely related to the Bourgain-Klainerman-Machedon spaces $X^{s,b}$ , for which a convenient  atlas of bilinear estimates was proven in \cite{AFS} and \cite{AFS1} in dimension $n \le 3$ and stated in arbitrary dimension. We give a proof for a special case in $n \ge 4$ and also rely on a paper by Lee and Vargas \cite{LV}, who obtain $L^p_t L^q_x$ - estimates for products of solutions of the wave equation.
If one uses solution spaces of $H^{s,b}$ - type it seems to be impossible to obtain our results  in space dimensions $n \ge 4$, because some of the bilinear estimates which we need simply fail. Therefore it is necessary to modify the solution spaces appropriately.

In chapter 2 we recall the reformulation of the Yang-Mills equation as a system of nonlinear wave equations and state our main theorem (Theorem \ref{Theorem1.1} and Corollary \ref{Cor}). We also fix some notation.
Chapter 3 contains the bilinear estimates in wave-Sobolev spaces. Moreover we define the solution spaces and state its fundamental properties. We reduce the local well-posedness problem to a suitable set of nonlinearities in Proposition \ref{Prop1.6}, where we complete rely on \cite{KS}. In chapter 4 we formulate the Yang-Mills equations in final form - using the whole null structure - and the necessary nonlinear estimates as in \cite{Te}. We also review some well-known properties of the standard null forms and the additional one detected in \cite{Te}.
In chapter 4 and 5 we prove the estimates for the nonlinearities for $n \ge 4$ and $n=3$ , respectively.

{\bf Acknowledgment:} I thank Axel Gr\"unrock who pointed out the paper by Lee-Vargas \cite{LV} to me.

\section{Main results}
Expanding (\ref{1}) in terms of the gauge potentials $\left\{A_\alpha\right\}$, we obtain: 
 \begin{equation}\label{YM2}
  \square A_\beta = \partial_\beta\partial^\alpha A_\alpha- [\partial^\alpha A_\alpha, A_\beta] - [A^\alpha,\partial^\alpha A_\beta] - 
  [A^\alpha, F_{\alpha\beta}].
\end{equation}
If we now impose the Lorenz gauge condition,
the system \eqref{YM2} reduces to the nonlinear wave equation
 \begin{equation}\label{YM3}
  \square A_\beta = - [A^\alpha,\partial_\alpha A_\beta] -   [A^\alpha, F_{\alpha\beta}].
\end{equation}
In addition, regardless of the choice of gauge, $F$ satisfies the wave equation 
    \begin{equation}\label{YMF1}
 \begin{split}
   \square F_{\beta\gamma}&=-[A^\alpha,\partial_\alpha F_{\beta\gamma}] - \partial^\alpha[A_\alpha,F_{\beta\gamma}] - \left[A^\alpha,[A_\alpha,F_{\beta\gamma}]\right]
   \\
   & \quad - 2[F^{\alpha}_{{\;\;\,}\beta},F_{\gamma\alpha}].
   \end{split}
 \end{equation}
  Indeed,  this will follow if we apply $D^\alpha$ to the 
 Bianchi identity
$$
  D_\alpha F_{\beta\gamma} + D_\beta F_{\gamma\alpha} + D_\gamma F_{\alpha\beta} = 0
$$
and simplify the resulting expression using the commutation identity
$$
  D_\alpha D_\beta X - D_\beta D_\alpha X = [F_{\alpha\beta},X]
$$
and  (\ref{1})  (\cite{ST}).

Expanding the second and fourth terms in \eqref{YMF1}, and also imposing the Lorenz gauge, yields 
\begin{equation}\label{YMF2}
\begin{split}
       \square F_{\beta\gamma}&= - 2[A^\alpha,\partial_\alpha F_{\beta\gamma}]
      + 2[\partial_\gamma A^\alpha, \partial_\alpha A_\beta]
      - 2[\partial_\beta A^\alpha, \partial_\alpha A_\gamma]
      \\
      &\quad + 2[\partial^\alpha A_\beta , \partial_\alpha A_\gamma]
                 + 2[\partial_\beta A^\alpha, \partial_\gamma A_\alpha] - [A^\alpha,[A_\alpha,F_{\beta\gamma}]] 
                 \\
                 &\quad  +2[F_{\alpha\beta},[A^\alpha,A_\gamma]]- 2[F_{\alpha\gamma},[A^\alpha,A_\beta]]
                      - 2[[A^\alpha,A_\beta],[A_\alpha,A_\gamma]] .  
                         \end{split}    
\end{equation}

 Note on the other hand by expanding the last term in the right hand side of \eqref{YM3}, we obtain 
 \begin{equation}\label{YM4}
  \square A_\beta = - 2[A^\alpha,\partial_\alpha A_\beta] + [A^\alpha,\partial_\beta A_\alpha] - 
  [A^\alpha, [A_\alpha,A_\beta]].
\end{equation}

We want to solve the system \eqref{YMF2}-\eqref{YM4} simultaneously for $A$ and $F$.
So to pose the Cauchy problem for this system, we consider initial data for $(A,F)$ at $t=0$ :
\begin{equation}\label{Data-AF}
A(0) = a, \quad \partial_t A(0) = \dot a,
        \quad
    F(0) =f, \quad \partial_t F(0) = \dot f.
 \end{equation}

In fact, the initial data for $F$ can be determined from $(a, \dot a)$ as follows:
\begin{equation}\label{f}
\left\{
\begin{aligned}
  f_{ij} &= \partial_i a_j - \partial_j a_i + [a_i,a_j],
  \\
  f_{0i} &= \dot a_i - \partial_i a_0 + [a_0,a_i],
\\
  \dot f_{ij} &= \partial_i \dot a_j - \partial_j \dot a_i + [\dot a_i,a_j]+[ a_i, \dot a_j],
  \\
 \dot f_{0i} &= \partial^j f_{ji} +[a^\alpha, f_{\alpha i}] 
\end{aligned}
\right.
\end{equation}
where the first three expressions come from \eqref{curv} whereas 
the last one comes from (\ref{1}) with $\beta=i$.

Note that the Lorenz gauge condition $\partial^\alpha A_\alpha=0$ and (\ref{1}) with $\beta=0$ impose the constraints 
\begin{equation}\label{Const}
\dot a_0= \partial^i a_i,
\quad
   \partial^i f_{i0} + [a^i, f_{i0}]=0.
\end{equation}

Now we formulate our main theorem.
\begin{theorem}
\label{Theorem1.1}
If $n \ge 4$ , assume that $s$ and $r$ satisfy the following conditions:
\begin{align*}
s > \frac{n}{2}-\frac{7}{8} \, ,& \quad \quad r >\frac{n}{2}-\frac{7}{4} \, , \quad \quad \quad r < s < r+1 \, .\\
3r-2s > \frac{n}{2} - \frac{7}{2}\, , &  \quad 2r-s > - \frac{3}{4} \, \, \,(\text{if} \,\,\, n=4) \, ,\\
2s-r > \frac{n}{2} \, , & \quad 3s-2r > \frac{n}{2} + \half \, .
\end{align*}
If $n=3$ , assume :
\begin{align*}
s > \frac{3}{4} \, , & \quad r >-\frac{1}{8} \, , \quad  r < s < r+1 \, , \\
2r-s > -1\, , &  \quad 2s-r >  \frac{3}{2} \, ,  \quad 3s-2r > \frac{7}{4} 
\end{align*}
Given initial data $(a,\dot{a}) \in H^s \times H^{s-1}$ ,  $(f,\dot{f}) \in H^r \times H^{r-1}$ , there exists a time $T > 0$ , $T=T(\|a\|_{H^s},\|\dot{a}\|_{H^{s-1}} , \|f\|_{H^r} , \|\dot{f}\|_{H^{r-1}})$ , such that the Cauchy problem (\ref{YMF2}),(\ref{YM4}),(\ref{Data-AF}) has a unique solution $A \in F^s_T$ , $F \in G^r_T$ (these spaces are defined in Def. \ref{Def.1.2}). This solution has the regularity
$$ A \in C^0([0,T],H^s) \cap C^1([0,T],H^{s-1}) \quad , \quad F \in C^0([0,T],H^r) \cap C^1([0,T],H^{r-1}) \, . $$
\end{theorem}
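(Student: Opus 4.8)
The plan is to set up the problem as a fixed-point argument for the coupled system \eqref{YMF2}--\eqref{YM4}, using the specially constructed solution spaces $F^s_T$ and $G^r_T$ of Definition \ref{Def.1.2} rather than the plain wave-Sobolev spaces $H^{s,b}$. The backbone of the argument is the iteration scheme of Klainerman--Selberg \cite{KS}: Duhamel's formula turns \eqref{YMF2} and \eqref{YM4} into a map $(A,F)\mapsto \Phi(A,F)$ whose components are the free evolution of the data plus $\square^{-1}$ applied to the various nonlinearities, and one seeks a fixed point in a closed ball of $F^s_T\times G^r_T$. The abstract machinery of \cite{KS} (which I am entitled to invoke, since the excerpt states its fundamental properties are carried over to the modified spaces) reduces everything to proving two things: first, that the linear/energy estimate holds for these spaces, so that $\|\Phi(A,F)\|_{F^s_T\times G^r_T}$ is controlled by the data norms plus the nonlinear contributions; and second, that each nonlinear term obeys a multilinear estimate of the form $\|N(A,F)\|_{\text{(dual space)}} \lesssim T^{\theta}\,\|(A,F)\|^{k}_{F^s_T\times G^r_T}$ with some positive power $\theta>0$ of $T$, which simultaneously gives the contraction and the continuity-in-time statement $A\in C^0H^s\cap C^1 H^{s-1}$, $F\in C^0H^r\cap C^1H^{r-1}$. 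This last point is exactly where Proposition \ref{Prop1.6} is used: it isolates the precise list of nonlinearities whose estimates must be established.

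The substance of the proof is therefore the verification of those multilinear estimates, term by term, for the right-hand sides of \eqref{YMF2} and \eqref{YM4}. Here I would proceed as in Selberg--Tesfahun \cite{ST} and Tesfahun \cite{Te}: most of the quadratic terms, after commuting derivatives and using the Lorenz gauge $\partial^\alpha A_\alpha=0$, can be written in terms of the standard null forms $Q_{\alpha\beta}(u,v)=\partial_\alpha u\,\partial_\beta v-\partial_\beta u\,\partial_\alpha v$ (and the additional partial null form detected in \cite{Te}), for which the symbol vanishes on the characteristic cone and thus gains the crucial derivative. These null-form estimates, together with the cubic and quartic terms which are handled by straightforward product estimates (placing the low-regularity factor $F$ in $G^r_T$ and the others in $F^s_T$), then have to be proven in the modified spaces. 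For $n\ge 4$ the relevant bilinear wave-Sobolev estimates are the ones collected in Chapter 3; a special case is proven directly and the $L^p_tL^q_x$ product estimates of Lee--Vargas \cite{LV} are invoked for the rest. For $n=3$ one uses the $(s,b)$-atlas of \cite{AFS},\cite{AFS1}. The arithmetic conditions on $(s,r)$ in the statement are precisely the exponent constraints under which all of these estimates close; in particular the conditions $r<s<r+1$, $2s-r>n/2$ and $3s-2r>n/2+1/2$ (and their $n=3$ analogues) are dictated by specific terms, while $s>\frac{n}{2}-\frac78$, $r>\frac{n}{2}-\frac74$ come from the single worst term without null structure, the one producing estimate \eqref{32}.

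The main obstacle is exactly that term without a null structure — the one the author flags in the introduction as preventing descent to the scaling-critical exponent $s=\frac n2-1$ and as the reason the plain $H^{s,b}$ spaces are insufficient. For this term the naive bilinear $H^{s,b}$ estimate simply fails, so the whole point of the modified solution space is to recover just enough: one exploits the extra structure built into $F^s_T$, $G^r_T$ (the \cite{KS}-type modification, which encodes finer information than a single $(s,b)$ pair) to prove the substitute estimate \eqref{32}, and it is the loss in that estimate that forces $s$ down only to $\frac n2-\frac78$ rather than $\frac n2-1$. A secondary technical difficulty, in the $n\ge 4$ case, is that the bilinear estimates in wave-Sobolev spaces in high dimensions are not all available off the shelf; establishing the needed special case and correctly importing the Lee--Vargas product estimates \cite{LV} into this framework is delicate. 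Finally, one must check that the modified spaces genuinely retain the properties used by \cite{KS} — the energy inequality, the time-localization/restriction estimate giving the positive power of $T$, and the embedding into $C^0 H^s\cap C^1 H^{s-1}$ — which is where the claim that "its main properties are preserved" does real work.
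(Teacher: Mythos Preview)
Your proposal is correct and follows essentially the same approach as the paper: reduce via Proposition \ref{Prop1.6} to the list of multilinear estimates \eqref{24}--\eqref{40}, exploit the null structure of \cite{ST},\cite{Te} via Lemmas \ref{Lemma1Te}, \ref{Lemma2Te}, \ref{Lemma2.2}, \ref{Lemma2.1}, and then prove these estimates in the modified spaces $F^s$, $G^r$ using the bilinear tools of Chapter 3 (including Lee--Vargas for $n\ge4$ and the atlas \cite{AFS},\cite{AFS1} for $n=3$). One small correction: the second (${\mathcal L}^q_t{\mathcal L}^r_x$-type) component of the $F^s$-norm is needed not primarily for \eqref{32} but rather for \eqref{24}, \eqref{25}, \eqref{26}, \eqref{29} (see the Remark preceding the appendix and the estimates \eqref{R1}--\eqref{R4}), whereas \eqref{32} is handled by Lee--Vargas (Corollary \ref{Cor.3.3}) and is what forces the threshold $s>\frac{n}{2}-\frac{7}{8}$ through the condition $3r-2s>\frac{n}{2}-\frac{7}{2}$.
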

{\bf Remark:} {\bf Case $n \ge 4$ .} 1. The assumptions on $s$ and $r$ imply $6s-\frac{3}{2}n > 3r > 2s + \frac{n}{2} -\frac{7}{2}$,  which can only be fulfilled, if $s > \frac{n}{2}-\frac{7}{8}$ , and therefore $r > \frac{n}{2} - \frac{7}{4}$.  One easily checks that the choice $s= \frac{n}{2}-\frac{7}{8} + \epsilon$ , $ r = \frac{n}{2} - \frac{7}{4}+\epsilon$ satisfies our assumptions, if $\epsilon > 0$ is small enough. \\
2. The following estimate is automatically fulfilled $ 2r-2 > \frac{n}{2}- 3 $ , because $2r-s=3r-2s -r+s > \frac{n}{2} - \frac{7}{2}$ .  \\
{\bf Case $n=3$ .}
 1. The assumptions on $s$ and $r$ imply $s > \frac{3}{4}$ and $r > \frac{s}{2} - \half$ , which implies $r >  - \frac{1}{8}$ . One easily checks that the choice $s= \frac{3}{4} + \epsilon$ , $ r = - \frac{1}{8}+\epsilon$ satisfies our assumptions, if $\epsilon > 0$ is small enough. \\
2. The following conditions are automatically fulfilled 
$$ 3r-2s > -2  \,,  \quad 4r-s >-2 \, , \quad 4s-r > 3 \, ,$$
because $3r-2s = (2r-s)+(r-s) > -1-1 = -2$ and $4r-s>(2r-s)+2r > -1+2r > -1-\frac{1}{4} > -2$ .

\begin{Cor}
\label{Cor}
Let $s,r$ fulfill the assumptions of Theorem \ref{Theorem1.1}. Moreover assume that the initial data fulfill (\ref{f}) and (\ref{Const}). Given any $(a,\dot{a}) \in H^{r+1} \times H^r$ , there exists a time $T > 0$ , $T=T(\|a\|_{H^s},\|\dot{a}\|_{H^{s-1}} , \|f\|_{H^r} , \|\dot{f}\|_{H^{r-1}})$ , such that the solution $(A,F)$ of Theorem \ref{Theorem1.1} satisfies the Yang-Mills system (\ref{curv}),(\ref{1}) with Cauchy data $(a,\dot{a})$ and the Lorenz gauge condition $\partial^{\alpha} A_{\alpha} =0$ .
\end{Cor}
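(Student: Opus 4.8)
The plan is to show that, when the data satisfy (\ref{f}) and (\ref{Const}), the two \emph{defect fields}
$$
 w:=\partial^\alpha A_\alpha,\qquad M_{\beta\gamma}:=F_{\beta\gamma}-\bigl(\partial_\beta A_\gamma-\partial_\gamma A_\beta+[A_\beta,A_\gamma]\bigr)
$$
vanish identically on $[0,T]$. Granting this, $M\equiv0$ is exactly (\ref{curv}) (that is, $F=F[A]$), $w\equiv0$ is the Lorenz gauge condition, and then the equation (\ref{YM4}) solved in Theorem \ref{Theorem1.1} becomes (\ref{YM3}) once $F=F[A]$ is used and, after the further substitution $\partial^\alpha A_\alpha=0$, becomes (\ref{YM2}), i.e.\ (\ref{1}); the Cauchy data $A(0)=a$, $\partial_tA(0)=\dot a$ are already part of the conclusion of Theorem \ref{Theorem1.1}. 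Since $(a,\dot a)\in H^{r+1}\times H^r$, the fields $(f,\dot f)$ given by (\ref{f}) lie in $H^r\times H^{r-1}$ (by the product estimates and the hypotheses on $r$), so Theorem \ref{Theorem1.1} indeed applies; the extra regularity of $a$ beyond $H^s$ is precisely what makes $F[A]$, hence $M$, meaningful at the $H^r$-level of $F$.

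First I would reduce to smooth data. Choosing Schwartz data $(a^{(k)},\dot a^{(k)})\to(a,\dot a)$ in $H^{r+1}\times H^r$ and $(f^{(k)},\dot f^{(k)})$ defined by (\ref{f}), the corresponding solutions $(A^{(k)},F^{(k)})$ exist on a common interval $[0,T]$ (the existence time depending only on the $H^s$- and $H^r$-norms of the data), are smooth there by persistence of higher regularity, and converge to $(A,F)$ in $F^s_T\times G^r_T$ by the Lipschitz dependence on the data built into the contraction argument behind Theorem \ref{Theorem1.1}. Because the Yang--Mills equations and the Lorenz relation are polynomial in $(A,\partial A,F,\partial F)$ with all products controlled by the multilinear estimates of the proof, they are stable under this limit in $\mathcal D'$; hence it suffices to treat smooth data.

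The core step is that $(w,M)$ solves a \emph{closed, homogeneous, linear} second-order hyperbolic system. Applying $\partial^\beta$ to (\ref{YM4}) and using the antisymmetry of the commutator — in particular $[\partial^\beta A^\alpha,\partial_\beta A_\alpha]=0$ and $[\partial^\beta A^\alpha,\partial_\alpha A_\beta]=0$ — expresses $\square w$ through $[A^\alpha,\partial_\alpha w]$, $[A^\alpha,[A_\alpha,w]]$, $[A^\alpha,\square A_\alpha]$ and terms cubic in $A$; substituting $\square A_\alpha$ again from (\ref{YM4}), rewriting every antisymmetrised derivative as $\partial_\alpha A_\beta-\partial_\beta A_\alpha=F_{\alpha\beta}-[A_\alpha,A_\beta]-M_{\alpha\beta}$, and invoking the Jacobi identity, the purely cubic contributions and the $F$-bilinear ones cancel, leaving $\square w$ equal to an expression linear in $(w,M,\partial w,\partial M)$ with coefficients that are smooth functions of $(A,\partial A,F)$. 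Similarly, from $\square M_{\beta\gamma}=\square F_{\beta\gamma}-\partial_\beta\square A_\gamma+\partial_\gamma\square A_\beta-\square[A_\beta,A_\gamma]$, inserting (\ref{YMF2}), (\ref{YM4}) and $\square[A_\beta,A_\gamma]=[\square A_\beta,A_\gamma]+[A_\beta,\square A_\gamma]+2[\partial^\alpha A_\beta,\partial_\alpha A_\gamma]$, and repeatedly using the Bianchi and Jacobi identities, one writes $\square M_{\beta\gamma}$ as such a linear expression as well. This is exactly the algebra carried out in \cite{ST} and \cite{Te}; the single new feature is that here $F$ is an independent unknown, which is why the curvature defect $M$ enters the system alongside $w$.

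Next I would verify that every Cauchy datum of this system vanishes. From (\ref{f}) one reads off $M_{ij}(0)=0$, $M_{0i}(0)=0$ and $\partial_tM_{ij}(0)=0$ (with $M_{00}\equiv0$ by antisymmetry), and the first relation in (\ref{Const}) gives $w(0)=-\dot a_0+\partial^i a_i=0$. For the two remaining identities $\partial_tw(0)=0$ and $\partial_tM_{0i}(0)=0$ one uses (\ref{YM4}) at $t=0$ to express $\partial_t^2A_0(0)$ and $\partial_t^2A_i(0)$ and then, respectively, the Gauss-law constraint (the second relation in (\ref{Const})) and the Gauss-law form of $\dot f_{0i}$ in the last line of (\ref{f}); a short computation shows everything cancels — precisely the reason the data are prescribed in that form. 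Finally, for smooth $(A,F)$ the defect system is linear second-order hyperbolic with smooth coefficients and vanishing Cauchy data, so finite propagation speed together with the standard energy inequality and Gronwall's lemma force $w\equiv0$ and $M\equiv0$ on $[0,T]$; passing back to general data through the limit above proves the corollary. The main obstacle is the bookkeeping of the previous paragraph — confirming that the defect equations genuinely close, with no uncontrolled nonlinear remainder — which is where the precise algebraic form of (\ref{YMF2})-(\ref{YM4}) and the Bianchi and Jacobi identities are indispensable; the first-order compatibility checks at $t=0$ are the secondary technical point.
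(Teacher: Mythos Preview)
Your proposal is correct and follows essentially the same approach as the paper, which simply defers to \cite{ST}, Remark 2, for the argument that the constraints (\ref{f}) and (\ref{Const}) force the Lorenz gauge and $F=F[A]$ to propagate. What you have written is precisely a sketch of that argument: deriving a closed linear homogeneous wave system for the defect fields $w=\partial^\alpha A_\alpha$ and $M=F-F[A]$, checking the initial data vanish via (\ref{f}) and (\ref{Const}), concluding by energy estimates and Gronwall, and passing to the limit from smooth data using the well-posedness of Theorem \ref{Theorem1.1}.
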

\begin{proof}[Proof of the Corollary]
The solution $(A,F)$ does not necessarily fulfill the Lorenz gauge condition and (\ref{curv}), i.e. $F=F[A]$ . If however the conditions (\ref{f}) and (\ref{Const}) are assumed then these properties are satisfied and $(A,F)$ is a solution of the Yang-Mills system (\ref{curv}),(\ref{1}) with Cauchy data $(a,\dot{a})$. This was shown in \cite{ST}, Remark 2. 
\end{proof}

{\bf Remarks:} 1. Because $s < r+1$ by assumption the potential $A$ possibly loses some regularity compared to its data, whereas this is not the case for $F$ , which is the decisive factor, whereas the regularity of $A$ is of minor interest. \\
2. If $(a,\dot{a}) \in H^{r+1} \times H^r$ , then $(f,\dot{f})$ , defined by (\ref{f}) , fulfills $(f,\dot{f}) \in H^r \times H^{r-1}$,  as one easily checks.

Let us fix some notation.
We denote the Fourier transform with respect to space and time  by $\,\,\widehat{}\,$ . 
 $\Box = \partial_t^2 - \Delta$ is the d'Alembert operator,
$a\pm := a \pm \epsilon$ for a sufficiently small $\epsilon >0$ , and $\langle \,\cdot\, \rangle := (1+|\cdot|^2)^{\frac{1}{2}}$ .

The standard wave-Sobolev spaces $H^{s,b}$ of Bourgain-Klainerman-Machedon type are the completion of the Schwarz space ${\mathcal S}(\R^{1+n})$ with norm
$$ \|u\|_{H^{s,b}} = \| \langle \xi \rangle^s \langle  |\tau| - |\xi| \rangle^b \widehat{u}(\tau,\xi) \|_{L^2_{\tau \xi}} \, . $$ 
We also define $H^{s,b}_T$ as the space of the restrictions of functions in $H^{s,b}$ to $[0,T] \times \mathbb{R}^n$.

Let $\Lambda^{\alpha}$, $\Lambda_{+}^{\alpha}$ and $\Lambda_{-}^{\alpha}$
be the multipliers with symbols  $$
\langle\xi \rangle^\alpha,
\quad \langle|\tau|  + |\xi|\rangle^\alpha,
\quad \langle|\tau|-|\xi|\rangle^\alpha.
$$
Similarly let $D^{\alpha}$,
$D_{+}^{\alpha}$ and $D_{-}^{\alpha}$ be the multipliers with symbols $$
\abs{\xi}^\alpha,
\quad (\abs{\tau} + \abs{\xi})^\alpha,
\quad ||\tau|-|\xi||^\alpha,
$$
respectively.

Let $\partial$ denote the collection of space and time derivatives.

If $u, v \in {\mathcal S}'$ and $\widehat u, \widehat v$ are tempered
functions, we write $u \preceq v$ iff $\abs{\widehat u} \le \widehat v$,
and $\precsim$ means $\preceq$ up to a constant. If $u = (u^1,\dots,u^N)$
and $v = (v^1,\dots,v^N)$, then $u \preceq v$ (resp. $u \precsim v$) means
$u^I \preceq v^I$ (resp. $u^I \precsim v^I$) for $I = 1, \dots, N$.

\section{Preliminaries}
The Strichartz type estimates for the wave equation are given in the next proposition.
\begin{prop}
\label{Str}
If $n \ge 2$ and
\begin{equation}
2 \le q \le \infty, \quad
2 \le r < \infty, \quad
\frac{2}{q} \le (n-1) \left(\half - \frac{1}{r} \right), 
\end{equation}
then the following estimate holds:
$$ \|u\|_{L_t^q L_x^r} \lesssim \|u\|_{H^{\frac{n}{2}-\frac{n}{r}-\frac{1}{q},\half+}} \, ,$$
especially in the case $n \ge 4$ :
$$ \|u\|_{L^2_t L^r_x} \lesssim \|u\|_{H^{\frac{n-1}{2}-\frac{n}{r},\half +}} $$
for $ \frac{2(n-1)}{n-3} \le r < \infty $ .
\end{prop}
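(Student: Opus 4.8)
The plan is to deduce Proposition \ref{Str} from the classical Strichartz estimates for the homogeneous wave equation by transferring them to the wave-Sobolev spaces $H^{s,b}$. Recall that for the half-wave propagators $e^{\pm it|D|}$ (symbol $e^{\pm it|\xi|}$) and a pair $(q,r)$ satisfying the hypotheses $2\le q\le\infty$, $2\le r<\infty$, $\frac2q\le(n-1)(\half-\frac1r)$ — the last of which forces $n\ge 4$ when $q=2$ — one has
$$\norm{e^{\pm it|D|}f}_{L^q_tL^r_x}\lesssim\norm{D^\gamma f}_{L^2_x}\,,\qquad \gamma:=\frac n2-\frac nr-\frac1q\,.$$
A one-line computation shows that the admissibility condition forces $\gamma\ge\frac{n+1}{4}\bigl(1-\frac2r\bigr)\ge0$, so $\norm{D^\gamma f}_{L^2}\le\norm{\Lambda^\gamma f}_{L^2}$ and the estimate also holds with $\Lambda^\gamma$ (symbol $\angles{\xi}^\gamma$) in place of $D^\gamma$. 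The only genuinely non-elementary input here is the endpoint $q=2$ (for $n\ge4$), which is the Keel--Tao endpoint Strichartz estimate; everything else is routine.

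Next I would reduce to the two propagators $e^{\pm it|D|}$. Decompose $u=u_++u_-$ with $\widehat{u_\pm}=\mathbf 1_{\pm\tau\ge0}\,\widehat u$; since this is multiplication by a bounded function of $\tau$ in the Fourier variables, it commutes with $\Lambda^s$ and does not increase $\norm{\cdot}_{H^{s,b}}$, and on the support of $\widehat{u_\pm}$ one has $\angles{|\tau|-|\xi|}=\angles{\tau\mp|\xi|}$. Hence it suffices to prove, for $u=u_+$ say,
$$\norm{u}_{L^q_tL^r_x}\lesssim\norm{\angles{\xi}^\gamma\angles{\tau-|\xi|}^b\,\widehat u(\tau,\xi)}_{L^2_{\tau\xi}}\,,\qquad b=\half+\,,$$
the case $u=u_-$ being identical after replacing $e^{it|D|}$ by $e^{-it|D|}$.

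Then comes the transference step, which I would carry out as follows. Setting $\sigma=\tau-|\xi|$ and defining $G_\sigma$ by $\widehat{G_\sigma}(\xi)=\widehat u(\sigma+|\xi|,\xi)$, Fourier inversion in $\tau$ gives, up to a harmless constant, $u(t,x)=\int_\R e^{it\sigma}\bigl(e^{it|D|}G_\sigma\bigr)(x)\,d\sigma$. Minkowski's inequality in $\sigma$, together with the fact that the modulation $e^{it\sigma}$ leaves the $L^q_t$-norm invariant, yields
$$\norm{u}_{L^q_tL^r_x}\le\int_\R\norm{e^{it|D|}G_\sigma}_{L^q_tL^r_x}\,d\sigma\lesssim\int_\R\norm{\Lambda^\gamma G_\sigma}_{L^2_x}\,d\sigma\,,$$
and Cauchy--Schwarz in $\sigma$ against the weight $\angles{\sigma}^{-b}\angles{\sigma}^{b}$ gives
$$\int_\R\norm{\Lambda^\gamma G_\sigma}_{L^2_x}\,d\sigma\le\Bigl(\int_\R\angles{\sigma}^{-2b}\,d\sigma\Bigr)^{1/2}\Bigl(\int_\R\angles{\sigma}^{2b}\norm{\Lambda^\gamma G_\sigma}_{L^2_x}^2\,d\sigma\Bigr)^{1/2}\lesssim\norm{\angles{\xi}^\gamma\angles{\tau-|\xi|}^b\widehat u}_{L^2_{\tau\xi}}\,,$$
the first factor being finite exactly because $b>\half$, and the second reducing to the claimed norm after undoing the change of variables (Jacobian $1$, and $\angles{\tau-|\xi|}=\angles{|\tau|-|\xi|}$ on the support of $\widehat{u_+}$). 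This proves the general estimate with $s=\gamma$; for the displayed special case one sets $q=2$, so that the admissibility condition becomes $r\ge\frac{2(n-1)}{n-3}$ (hence $n\ge4$) and $\gamma=\frac{n-1}{2}-\frac nr$, as asserted. The main obstacle is thus only the endpoint Strichartz estimate; the rest is the standard Minkowski-plus-Cauchy--Schwarz transference, the sole points needing a line of checking being $\gamma\ge0$ and the arithmetic identifying $\gamma$ in the special case.
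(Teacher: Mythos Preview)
Your argument is correct and is exactly the approach the paper indicates: the paper's proof simply cites the Strichartz estimate (Ginibre--Velo) ``combined with the transfer principle,'' and what you have written is precisely the standard transfer principle spelled out in detail, including the Keel--Tao endpoint needed for $q=2$. There is nothing to correct; you have unpacked the one-line proof in the paper into its constituent steps.
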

\begin{proof}
This is the Strichartz type estimate, which can be found for e.g. in \cite{GV}, Prop. 2.1, combined with the transfer principle.
\end{proof}

An immediate consequence is the following modified Strichartz estimate.
\begin{prop}
\label{mStr}
If $n \ge 4$ , $ 2 \le r \le \frac{2(n-1)}{n-3}$ , one has the estimate
$$ \|u\|_{L^2_t L^r_x} \lesssim \|u\|_{H^{\frac{n+1}{2}\left(\half-\frac{1}{r}\right),\frac{n-1}{2}\left(\half-\frac{1}{r}\right)+}} \lesssim  \|u\|_{H^{\frac{n+1}{2}\left(\half-\frac{1}{r}\right),\half+}} \, . 
$$
\end{prop}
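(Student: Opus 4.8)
The plan is to deduce Proposition \ref{mStr} by complex interpolation between two endpoint estimates: the trivial $L^2_t L^2_x$ bound at $r=2$ and the endpoint Strichartz estimate of Proposition \ref{Str} at $r_0:=\frac{2(n-1)}{n-3}$ (note that $r_0>2$ for $n\ge 4$, so the interval $[2,r_0]$ is nondegenerate). At $r=2$ both exponents $\frac{n+1}{2}(\half-\frac1r)$ and $\frac{n-1}{2}(\half-\frac1r)$ vanish, and $\|u\|_{L^2_t L^2_x}=\|u\|_{H^{0,0}}\le\|u\|_{H^{0,0+}}$ by Plancherel, so the estimate is immediate there. At $r=r_0$ one computes $\half-\frac1{r_0}=\frac1{n-1}$, so Proposition \ref{Str} applies with $q=2$ — the admissibility condition $\frac2q\le(n-1)(\half-\frac1{r_0})$ then holds with equality — and its Sobolev exponent is $\frac n2-\frac n{r_0}-\half=\frac{n+1}{2(n-1)}=\frac{n+1}{2}(\half-\frac1{r_0})$, while $\frac{n-1}{2}(\half-\frac1{r_0})=\half$. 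Hence Proposition \ref{Str} already gives exactly the asserted estimate at $r=r_0$.

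Now I would interpolate. Since the wave-Sobolev spaces are weighted $L^2$ spaces on the Fourier side, $[H^{s_0,b_0},H^{s_1,b_1}]_\theta=H^{s_\theta,b_\theta}$ with $s_\theta=(1-\theta)s_0+\theta s_1$ and $b_\theta=(1-\theta)b_0+\theta b_1$, while on the target side $[L^2_t L^2_x,L^2_t L^{r_0}_x]_\theta=L^2_t L^{r_\theta}_x$ with $\frac1{r_\theta}=\frac{1-\theta}{2}+\frac\theta{r_0}$. A short computation gives $\half-\frac1{r_\theta}=\frac\theta{n-1}$, i.e. $\theta=(n-1)(\half-\frac1{r_\theta})$, and substituting this into $s_\theta=\theta\cdot\frac{n+1}{2(n-1)}$ and $b_\theta=\theta\cdot(\half+)$ reproduces the exponents $\frac{n+1}{2}(\half-\frac1r)$ and $\frac{n-1}{2}(\half-\frac1r)+$. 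Interpolating the inclusions $H^{s_i,b_i}\hookrightarrow L^2_t L^{r_i}_x$ for the two endpoints therefore yields the first inequality for every $r\in[2,r_0]$.

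The second inequality is merely monotonicity of $\|\cdot\|_{H^{s,b}}$ in $b$: since $\langle|\tau|-|\xi|\rangle\ge 1$, the norm is nondecreasing in $b$, and on the range $r\le\frac{2(n-1)}{n-3}$ one has $\frac{n-1}{2}(\half-\frac1r)\le\half$ (with equality only at $r_0$), so $H^{\frac{n+1}{2}(\half-\frac1r),\frac{n-1}{2}(\half-\frac1r)+}\hookrightarrow H^{\frac{n+1}{2}(\half-\frac1r),\half+}$. I do not expect a genuine obstacle here; the only points requiring care are that the $q=2$ Strichartz estimate is admissible precisely at the endpoint $r=r_0$ (equality in the admissibility condition), that $r_0>2$ so the interpolation segment is nondegenerate, and the consistent bookkeeping of the arbitrarily small $\epsilon$-losses in the $b$-exponent, which interpolate linearly along with everything else.
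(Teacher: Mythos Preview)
Your proposal is correct and follows essentially the same approach as the paper: interpolate the trivial identity $\|u\|_{L^2_tL^2_x}=\|u\|_{H^{0,0}}$ with the Strichartz endpoint $\|u\|_{L^2_tL^{r_0}_x}\lesssim\|u\|_{H^{\frac{n+1}{2(n-1)},\half+}}$ at $r_0=\frac{2(n-1)}{n-3}$, and note that the second inequality is trivial monotonicity in $b$. One small slip: in your last line the embedding arrow is reversed --- since $\frac{n-1}{2}(\half-\frac1r)\le\half$, it is $H^{s,\half+}\hookrightarrow H^{s,\frac{n-1}{2}(\half-\frac1r)+}$, not the other way; the norm inequality you want is nonetheless correct.
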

\begin{proof}
The last estimate is trivial. For the first one we
interpolate the trivial identity $\|u\|_{L^2_t L^2_x} = \|u\|_{H^{0,0}} $ with the estimate
$$ \|u\|_{L^2_t L^{\frac{2(n-1)}{n-3}}_x} \lesssim \|u\|_{H^{\frac{n+1}{2(n-1)},\half+}} \, , $$
which holds by Prop. \ref{Str}.
\end{proof}

The following product estimates for wave-Sobolev spaces were proven in \cite{AFS}.
\begin{prop}
\label{Prop.1.2'} Let $n=3$ .
 For $s_0,s_1,s_2,b_0,b_1,b_2 \in {\mathbb R}$ and $u,v \in   {\mathcal S} ({\mathbb R}^{3+1})$ the estimate
$$\|uv\|_{H^{-s_0,-b_0}} \lesssim \|u\|_{H^{s_1,b_1}} \|v\|_{H^{s_2,b_2}} $$ 
holds, provided the following conditions are satisfied:
\begin{align*}
\nonumber
& b_0 + b_1 + b_2 > \frac{1}{2} \, ,
& b_0 + b_1 \ge 0 \, ,\quad \qquad  
& b_0 + b_2 \ge 0 \, ,
& b_1 + b_2 \ge 0
\end{align*}
\begin{align*}
\nonumber
&s_0+s_1+s_2 > 2 -(b_0+b_1+b_2) \\
\nonumber
&s_0+s_1+s_2 > \frac{3}{2} -\min(b_0+b_1,b_0+b_2,b_1+b_2) \\
\nonumber
&s_0+s_1+s_2 > 1 - \min(b_0,b_1,b_2) \\
\nonumber
&s_0+s_1+s_2 > 1 \\
 &(s_0 + b_0) +2s_1 + 2s_2 > \frac{3}{2} \\
\nonumber
&2s_0+(s_1+b_1)+2s_2 > \frac{3}{2} \\
\nonumber
&2s_0+2s_1+(s_2+b_2) > \frac{3}{2}
\end{align*}
\begin{align*}
\nonumber
&s_1 + s_2 \ge \max(0,-b_0) \, ,\quad
\nonumber
s_0 + s_2 \ge \max(0,-b_1) \, ,\quad
\nonumber
s_0 + s_1 \ge \max(0,-b_2)   \, .
\end{align*}
\end{prop}

\vspace{1em}

The following proposition was proven by \cite{KT}.
\begin{prop}
\label{TheoremE}
Let $n \ge 2$, and let $(q,r)$ satisfy:
$$
2 \le q \le \infty, \quad
2 \le r < \infty, \quad
\frac{2}{q} \le (n-1) \left(\half - \frac{1}{r} \right)
$$
Assume that
\begin{gather*}
0 < \sigma < n - \frac{2n}{r} - \frac{4}{q}, \\
s_1, s_2 < \frac{n}{2} - \frac{n}{r} - \frac{1}{q}, \\
s_1 + s_2 + \sigma = n - \frac{2n}{r} - \frac{2}{q}. \end{gather*} 
then
$$
\|D^{-\sigma}(uv)\|_{L_t^{q/2} L_x^{r/2}}
\lesssim
\|u\|_{H^{s_1,\half+}}
\|v\|_{H^{s_2,\half+}} \, .
$$
\end{prop}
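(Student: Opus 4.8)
The plan is to reconstruct the standard argument: pass to free waves, absorb the negative-order operator $D^{-\sigma}$ by Sobolev embedding, and reduce to the sharp bilinear $L^2$ estimates for the wave equation.

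First I would invoke the bilinear transfer principle, which is available because $b=\half+ > \half$: it reduces the claim to the corresponding estimate for free solutions,
\[ \norm{D^{-\sigma}\big((e^{\pm_1 it|D|}f)(e^{\pm_2 it|D|}g)\big)}_{L^{q/2}_t L^{r/2}_x} \lesssim \norm{f}_{H^{s_1}}\norm{g}_{H^{s_2}}, \]
for each fixed choice of signs $\pm_1,\pm_2$ (the same-sign and opposite-sign cases differ only in the transversality geometry used for the bilinear estimate below). Decomposing $f$, $g$ and the output into frequencies $\lesssim 1$ and $\gtrsim 1$, the pieces involving a frequency $\lesssim 1$ are disposed of by crude Bernstein and Hölder bounds -- there $\angles{\xi}^{s_i}\sim 1$ and there is ample slack -- so one is left with frequencies $\gtrsim 1$ and homogeneous norms. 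Next, since $0 < \sigma < n - \frac{2n}{r} - \frac{4}{q} < n\left(1 - \frac{2}{r}\right)$, Hardy--Littlewood--Sobolev gives $D^{-\sigma}\colon L^{a}_x \to L^{r/2}_x$ with $\frac1a = \frac2r + \frac\sigma n \in \left(\frac2r, 1\right)$, so it suffices to bound $\norm{uv}_{L^{q/2}_t L^{a}_x}$ for $u = e^{\pm_1 it|D|}f$, $v = e^{\pm_2 it|D|}g$.

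Then I would carry out a Littlewood--Paley decomposition $u = \sum_\lambda u_\lambda$, $v = \sum_\mu v_\mu$ and assume $\mu \le \lambda$ by symmetry. In the high--low regime $\mu \ll \lambda$ the product has frequency $\sim\lambda$, and Hölder combined with the linear Strichartz estimate of Proposition \ref{Str} applied to $u_\lambda$ at an admissible pair (together with a cheap Strichartz or Bernstein bound for $v_\mu$) closes the estimate with a positive power of $\mu/\lambda$ to spare; the bookkeeping works precisely because $s_1, s_2 < \frac n2 - \frac nr - \frac1q$ and because of the scaling identity $s_1 + s_2 + \sigma = n - \frac{2n}{r} - \frac2q$. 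In the high--high regime $\mu \sim \lambda$, with output frequency $\nu$ running over $1 \lesssim \nu \lesssim \lambda$, plain Hölder plus linear Strichartz is \emph{not} enough -- the pair $(q,2a)$ need not be wave-admissible when $\sigma$ is near its upper limit -- so here I would invoke the sharp bilinear $L^2$ estimate for free waves of Foschi--Klainerman / Tao type (equivalently the product estimates of Lee--Vargas \cite{LV}), and then pass from $L^2_tL^2_x$ to $L^{q/2}_tL^{a}_x$ on the output block of frequency $\nu$ by Bernstein in $x$ and Hölder in $t$; again a positive power of $\nu/\lambda$ remains. Finally one sums the dyadic contributions over $\mu\le\lambda$ and over $\nu$: convergence holds because every hypothesis -- $\sigma > 0$, the two upper bounds on $s_1,s_2$, and the scaling equality -- is spent to produce a strictly positive exponent in some dyadic parameter, and the unbalanced case $s_1 \ne s_2$ is harmless since in the dyadic estimates the powers of $\lambda$ and $\mu$ may be reapportioned freely subject only to the total scaling and to each exponent staying below the Strichartz threshold.

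I expect the high--high interaction to be the main obstacle: reaching the full range $\sigma < n - \frac{2n}{r} - \frac4q$ genuinely requires the bilinear improvement over the linear Strichartz inequalities, and the sub-case in which the output frequency is much smaller than the inputs -- where $D^{-\sigma}$ provides essentially no gain -- is the delicate point that fixes how much one can afford. Everything else is routine Hölder/Bernstein bookkeeping together with the transfer principle.
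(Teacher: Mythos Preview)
The paper does not actually prove this proposition; it simply attributes the result to Klainerman--Tataru \cite{KT} in the sentence preceding the statement. So there is nothing to compare against in the paper itself.

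That said, your outline is essentially the Klainerman--Tataru argument: transfer to free waves, dyadic decomposition, linear Strichartz for the high--low interactions, and the bilinear $L^2$ improvement (Foschi--Klainerman type) for the high--high interactions, with Bernstein/H\"older to reach the target exponents. One small correction: in the high--high case with output frequency $\nu\ll\lambda$, the operator $D^{-\sigma}$ gives a \emph{large} gain $\nu^{-\sigma}$, not ``essentially no gain.'' What is true is that once you have spent $D^{-\sigma}$ on the Hardy--Littlewood--Sobolev step $L^a_x\to L^{r/2}_x$, the subsequent Bernstein step on a $\nu$-block costs exactly $\nu^{\sigma}$, so the two cancel and the summability in $\nu$ must come entirely from the bilinear estimate --- which is indeed why this sub-case is the bottleneck determining the upper bound on $\sigma$.
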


The following product estimate for wave-Sobolev spaces is a special case of the very convenient much more general atlas formulated by \cite{AFS} in arbitrary dimension, but proven only in the case $1\le n \le 3$. (\cite{AFS} and \cite{AFS1}). Therefore we have to give a proof.

\begin{prop}
\label{Prop.1.2}
Assume $n \ge 4$ and
$$s_0+s_1+s_2 > \frac{n-1}{2} \, , \, (s_0+s_1+s_2)+s_1+s_2 > \frac{n}{2} \, , \, s_0+s_1 \ge 0 \, , \, s_0+s_2 \ge 0 \, , \, s_1+s_2 \ge 0 \, . $$
The following estimate holds:
$$ \|uv\|_{H^{-s_0,0}} \lesssim \|u\|_{H^{s_1,\half+}} \|v\|_{H^{s_2,\half+}} \, . $$
\end{prop}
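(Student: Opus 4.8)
The plan is to run the dyadic Littlewood--Paley scheme of \cite{AFS} (carried out there only for $n\le 3$), feeding in the $n\ge 4$ bilinear ingredients from Prop. \ref{Str}, Prop. \ref{mStr}, Prop. \ref{TheoremE} and the Lee--Vargas product estimates \cite{LV}. By the relation $\preceq$ we may assume $\widehat u,\widehat v\ge 0$. Since the target modulation exponent is $0$, $L^2_{t,x}$-duality turns the claim into
$$\abs{\int_{\R^{1+n}} uv\,\overline w\,dx\,dt}\lesssim \norm{u}_{H^{s_1,\half+}}\norm{v}_{H^{s_2,\half+}}\norm{w}_{H^{s_0,0}}$$
uniformly in $w$; equivalently, $\norm{uv}_{H^{-s_0,0}}=\norm{\Lambda^{-s_0}(uv)}_{L^2_{t,x}}$, so it suffices to bound the latter. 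I would then decompose $u,v$ into pieces $u_{N_1,L_1},v_{N_2,L_2}$ of spatial frequency $\sim N_1,N_2$ and hyperbolic modulation $\sim L_1,L_2$, and decompose the output in spatial frequency only (it carries no modulation weight).

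The engine is a single block estimate
$$\norm{P_{N_0}(u_{N_1,L_1}v_{N_2,L_2})}_{L^2_{t,x}}\lesssim C(N_0,N_1,N_2;L_1,L_2)\,\norm{u_{N_1,L_1}}_{L^2_{t,x}}\norm{v_{N_2,L_2}}_{L^2_{t,x}},$$
with $C$ obtained by transference to (modulated) free waves. In the small-modulation range $C$ is the bilinear constant for a product of two free waves of frequency $\sim N_1,N_2$ with output frequency $\sim N_0$: in the output-low regime $N_0\lesssim N_1\sim N_2$ this is what the Lee--Vargas $L^p_tL^q_x$ bounds (or Prop. \ref{mStr} followed by H\"older) supply, and the output-high regime $N_2\lesssim N_1\sim N_0$ is the same estimate with the factors permuted; in the large-modulation ranges one pays with the surplus weight $L_i^{\half}$ and uses the cheaper bounds of Prop. \ref{Str}, Prop. \ref{mStr} and Bernstein. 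A convenient consolidated form of the small-modulation case is Prop. \ref{TheoremE} with $(q,r)=(4,4)$: taking $\sigma:=\tfrac{n-1}{2}-s_1-s_2$ it yields $\norm{\Lambda^{-s_0}(uv)}_{L^2_{t,x}}\lesssim\norm{D^{-\sigma}(uv)}_{L^2_{t,x}}\lesssim\norm{u}_{H^{s_1,\half+}}\norm{v}_{H^{s_2,\half+}}$ directly whenever $\tfrac12<s_1+s_2<\tfrac{n-1}{2}$ and $s_1,s_2<\tfrac{n-1}{4}$ (so that $0<\sigma<\tfrac n2-1$ and $\sigma<s_0$). The remaining corners — one of $s_1,s_2$ at least $\tfrac{n-1}{4}$, or $s_1+s_2$ small — are done from the block estimate; when, say, $s_1>\tfrac n2$ one short-cuts via $H^{s_1,\half+}\hookrightarrow L^\infty_{t,x}$, an elementary fixed-time product estimate and $-s_0\le s_2$.

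Finally I would sum the block estimates over $N_0,N_1,N_2$ and $L_1,L_2$, weighted by $N_0^{-s_0}N_1^{-s_1}N_2^{-s_2}L_1^{-\half-}L_2^{-\half-}$. The geometric series converge exactly under the stated hypotheses: $s_0+s_1+s_2>\tfrac{n-1}{2}$ is the scaling/energy threshold taming the resonant, frequency-concentrated configuration (it is precisely the room needed to meet the constraint $s_1+s_2+\sigma=\tfrac{n-1}{2}$ of Prop. \ref{TheoremE}); $(s_0+s_1+s_2)+s_1+s_2>\tfrac n2$ is exactly what makes the output-low interaction $N_0\lesssim N_1\sim N_2$ summable; and $s_0+s_1\ge 0$, $s_0+s_2\ge 0$, $s_1+s_2\ge 0$ keep each individual dyadic block from diverging on its own.

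I expect the main obstacle to be that output-low (high $\times$ high $\to$ low) interaction at small modulation: one must pin down the sharp constant for $\norm{P_{N_0}(u_{N_1}v_{N_2})}_{L^2_{t,x}}$ with $N_1\sim N_2\gg N_0$ — essentially the measure of the intersection of two thickened light cones, which is where the Lee--Vargas estimates are genuinely needed for $n\ge 4$ — and then sum it against $N_0^{-s_0}N_1^{-s_1}N_2^{-s_2}$ with no slack, which is what forces the second hypothesis. A lesser nuisance is the bookkeeping caused by $b_0=0$ (no help from the output modulation), handled by always placing the output in $L^2_{t,x}$.
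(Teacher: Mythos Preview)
Your plan is plausible and would likely succeed, but it is considerably heavier than what the paper actually does, and one of your expectations is off.

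The paper avoids the full dyadic Littlewood--Paley/modulation bookkeeping entirely. Instead it writes the claim as a trilinear integral and runs a simple case split on the relative sizes of $|\xi_0|,|\xi_1|,|\xi_2|$ and on the signs of $s_0,s_1,s_2$. In the output-high regimes (e.g.\ $|\xi_0|\sim|\xi_1|\gtrsim|\xi_2|$) it transfers the $\angles{\xi_0}^{-s_0}$ weight onto the high-frequency input and reduces to $\|uv\|_{L^2_{t,x}}\lesssim\|u\|_{H^{a,\half+}}\|v\|_{H^{b,\half+}}$ with $a+b>\frac{n-1}{2}$, which follows from Strichartz ($L^2_tL^\infty_x\times L^\infty_tL^2_x$) plus bilinear interpolation --- no block estimate or summation is needed. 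The only genuinely nontrivial case is the one you flag, $|\xi_0|\ll|\xi_1|\sim|\xi_2|$ with $s_0>0$, and there the paper uses exactly your ``consolidated'' tool: Prop.~\ref{TheoremE} with $q=r=4$, noting that under $|\xi_1|\sim|\xi_2|$ the constraint $s_1,s_2<\frac{n-1}{4}$ is vacuous. For the remaining corner $s_0\ge\frac n2-1$ it interpolates a special case of Prop.~\ref{TheoremE} with the trivial Sobolev bound at $s_0>\frac n2$.

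The point you should correct is the claim that Lee--Vargas is ``genuinely needed'' for the output-low interaction. It is not: because the target modulation exponent is $0$, the Klainerman--Tataru estimate (Prop.~\ref{TheoremE}) already gives the sharp $L^2_{t,x}$ bound on $D^{-\sigma}(uv)$, and that is all one uses. Lee--Vargas enters the paper only later, in the proof of \eqref{32}, where one needs a positive modulation exponent on the output. Your framework would work, but it introduces machinery (dyadic summation in both frequency and modulation, block constants) that the direct case analysis sidesteps; the paper's route is shorter and makes the role of each hypothesis transparent without a summability discussion.
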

\begin{proof}
We have to prove
\begin{align*}
I & := \int_* \frac{\widehat{u}_1(\xi_1,\tau_1)}{\angles{\xi_1}^{s_1} \angles{|\xi_1|-|\tau_1|}^{\half+}}  \frac{\widehat{u}_2(\xi_2,\tau_2)}{\angles{\xi_2}^{s_2} \angles{|\xi_2|-|\tau_2|}^{\half+}}  \frac{\widehat{u}_0(\xi_0,\tau_0)}{\angles{\xi_0}^{s_0}} \lesssim \|u_1\|_{L^2_{xt}}
\|u_2\|_{L^2_{xt}} \, .
\end{align*}
Here * denotes integration over $\xi_0+\xi_1+\xi_2=0 $ and $\tau_0+\tau_1+\tau_2=0$ . Remark, that we may assume that the Fourier transforms are nonnegative. We consider different regions. \\
1. If $|\xi_0| \sim |\xi_1| \gtrsim |\xi_2|$ and $s_2 \ge 0$, we obtain
\begin{align*}
I \sim \int_* \frac{\widehat{u}_1(\xi_1,\tau_1)}{\angles{\xi_1}^{s_1+s_0} \angles{|\xi_1|-|\tau_1|}^{\half+}}  \frac{\widehat{u}_2(\xi_2,\tau_2)}{\angles{\xi_2}^{s_2} \angles{|\xi_2|-|\tau_2|}^{\half+}}  \widehat{u}_0(\xi_0,\tau_0) \, .
\end{align*}
Thus we have to show
$$ \|uv\|_{L^2_{xt}} \lesssim \|u\|_{H^{s_1+s_0,\half+}} \|v\|_{H^{s_1,\half+}} \, . $$
By Prop. \ref{Str} we obtain
$$ \|uv\|_{L^2_{xt}} \lesssim \|u\|_{L^{\infty}_t L^2_x} \|v\|_{L^2_t L^{\infty}_x} \lesssim \|u\|_{H^{0,\half+}} \|v\|_{H^{\frac{n-1}{2}+,\half+}} $$
and also
$$ \|uv\|_{L^2_{xt}} \lesssim  \|u\|_{H^{\frac{n-1}{2}+,\half+}}      \|v\|_{H^{0,\half+}} \, .$$
Bilinear interpolation gives for $0 \le \theta \le 1 $ :
$$ \|uv\|_{L^2_{xt}} \lesssim \|u\|_{H^{\frac{n-1}{2}(1-\theta)+,\half+}} \|v\|_{H^{\frac{n-1}{2}\theta+,\half+}}  \, , $$
so that 
$$   \|uv\|_{L^2_{xt}} \lesssim \|u\|_{H^{s_1+s_0,\half+}} \|v\|_{H^{s_2,\half+}} \, , $$
if $ s_0+s_1+s_2 > \frac{n-1}{2} $ and $s_1+s_0 \ge 0$ .\\
2. If $|\xi_0| \sim |\xi_2| \gtrsim |\xi_1|$ and $s_1 \ge 0$ , we obtain similarly
$$  \|uv\|_{L^2_{xt}} \lesssim \|u\|_{H^{s_2+s_0,\half+}} \|v\|_{H^{s_1,\half+}} \, , $$
if $ s_0+s_1+s_2 > \frac{n-1}{2} $ and $s_2+s_0 \ge 0$ .\\
3. If $|\xi_1| \ge |\xi_2|$ , $s_0 \le 0$ and $s_2 \ge 0$, we have $|\xi_0| \lesssim |\xi_1| $ , so that
$ \angles{\xi_0}^{-s_0} \lesssim \angles{\xi_1}^{-s_0}$ and we obviously obtain the same result as in 1. \\
4. If $|\xi_1| \le |\xi_2|$ , $s_0 \le 0$ and $s_1 \ge 0$ , we obtain the same result as in 2. \\
5.  If $|\xi_0| \sim |\xi_1| \gtrsim |\xi_2|$ and $s_2 \le 0$ we obtain
$$ I \lesssim \int_* \frac{\widehat{u}_1(\xi_1,\tau_1)}{\angles{\xi_1}^{s_0+s_1+s_2} \angles{|\xi_1|-|\tau_1|}^{\half+}}  \frac{\widehat{u}_2(\xi_2,\tau_2)}{\angles{|\xi_2|-|\tau_2|}^{\half+}}  \widehat{u}_0(\xi_0,\tau_0)  \lesssim \|u_1\|_{L^2_{xt}} \|u_2\|_{L^2_{xt}} \, , $$
because under our asumption $s_0+s_1+s_2 > \frac{n-1}{2}$ we obtain by Prop. \ref{Str}:
$$ \|uv\|_{L^2_{xt}} \le \|u\|_{L^2_t L^{\infty}_x} \|v\|_{L^{\infty}_x L^2_t} \lesssim \| u \|_{H^{s_0+s_1+s_2,\half+}} \|v\|_{H^{0,\half+}} \, . $$
6. If $|\xi_1| \ge |\xi_2|$ , $s_2 \le 0$ and $s_0 \le 0$ , or \\
7. If $|\xi_0| \sim |\xi_2| \ge |\xi_1|$ and $s_1 \le 0$ , or \\
8. If $|\xi_1| \le |\xi_2|$ , $s_1 \le 0$ and $s_0 \le 0$ , the same argument applies.

Thus we are done, if $s_0 \le 0$ , and also, if $s_0 \ge 0$ , and $|\xi_0| \sim |\xi_2 | \ge |\xi_1|$ or $ |\xi_0| \sim |\xi_1 | \ge |\xi_2|$ . 

It remains to consider the following case: $|\xi_0| \ll |\xi_1| \sim |\xi_2|$ and $s_0 > 0$ . We apply Prop. \ref{TheoremE} which gives
$$ \|uv\|_{H^{-s_0,0}} \lesssim \|u\|_{H^{s_1,\half+}} \|v\|_{H^{s_2,\half+}} \, , $$
under the conditions $0 < s_0 < \frac{n}{2} - 1 $ , $s_0+s_1+s_2 = \frac{n-1}{2} $ and $s_1,s_2 < \frac{n-1}{4}$ . The last condition is not necessary in our case $|\xi_1| \sim |\xi_2|$ . Remark that this implies $s_1+s_2 > \half$ , so that $s_0+s_1+s_2+s_1+s_2 > \frac{n}{2}$ .  , The second condition can now be replaced by $s_0+s_1+s_2 \ge \frac{n-1}{2}$ , because we consider inhomogeneous spaces.

Finally we consider the case $|\xi_0| \ll |\xi_1| \sim |\xi_2|$ and $s_0 \ge \frac{n}{2}-1 $ .
If $s_0 > \frac{n}{2} $ and $s_1+s_2 \ge 0$ we obtain the claimed estimate by Sobolev
$$ \|uv\|_{H^{-s_0,0}} \lesssim \|u\|_{H^{0,\half+}} \|v\|_{H^{0,\half+}} \le  \|u\|_{H^{s_1,\half+}} \|v\|_{H^{s_2,\half+}} \, . $$
We now interpolate the special case
$$ \|uv\|_{H^{-\frac{n}{2}-,0}} \lesssim \|u\|_{H^{0,\half+}} \|v\|_{H^{0,\half+}}  $$
with the following estimate
$$ \|uv\|_{H^{1-\frac{n}{2}+,0}} \lesssim \|u\|_{H^{\frac{1}{4}+,\half+}} \|v\|_{H^{\frac{1}{4}+,\half+}} \, , $$
which follows from Prop. \ref{TheoremE} . We obtain
$$\|uv\|_{H^{-s_0-,0}} \lesssim \|u\|_{H^{k+,\half+}} \|v\|_{H^{k+,\half+}} \, , $$
where $s_0 = (1-\theta)\frac{n}{2} - \theta(1-\frac{n}{2}) = \frac{n}{2} - \theta \, \Leftrightarrow \, \theta = \frac{n}{2}-s_0$ , $0 \le \theta \le 1 $ , $k=\frac{\theta}{4} = \frac{n}{8}-\frac{s_0}{4}.$  Using our asumption  $(s_0+s_1+s_2)+s_1+s_2 > \frac{n}{2} \, \Leftrightarrow \, \frac{n}{2}-s_0 < 2(s_1+s_2),$  we obtain $0 \le k < \frac{s_1+s_2}{2}$ .  Because $|\xi_1| \sim |\xi_2|$ , we obtain
$$ \|uv\|_{H^{-s_0,0}} \lesssim \|u\|_{H^{s_1,\half+}} \|v\|_{H^{s_2,\half+}} $$
for $(s_0+s_1+s_2)+s_1+s_2 > \frac{n}{2}$ and $s_1+s_2 \ge 0$ .
\end{proof}

\begin{Cor}
\label{Cor.1.1}
Under the assumptions of Prop. \ref{Prop.1.2}
$$ \|uv\|_{H^{-s_0,0}} \lesssim \|u\|_{H^{s_1,\half-}} \|v\|_{H^{s_2,\half-}}  \, . $$
\end{Cor}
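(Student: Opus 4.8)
The goal is to upgrade Proposition \ref{Prop.1.2}, which gives the product estimate with the $\tfrac12+$ weights on both factors, to the version with $\tfrac12-$ weights. The natural strategy is an interpolation-in-$b$ argument. The plan is to interpolate the estimate of Proposition \ref{Prop.1.2} with a cruder estimate having $b=0$ on one (or both) factors, and then recover a symmetric statement with $b=\tfrac12-$ on both sides. More precisely, I would first establish, for suitable $s_i$, an estimate of the shape
$$ \|uv\|_{H^{-s_0,0}} \lesssim \|u\|_{H^{\sigma_1,0}} \|v\|_{H^{\sigma_2,\half+}} $$
(and the symmetric one with the roles of $u,v$ reversed) with $\sigma_i$ large enough that this holds trivially, e.g. by Sobolev embedding in space combined with the $L^2_t L^2_x \hookrightarrow H^{0,0}$ identity and $H^{0,\half+} \hookrightarrow L^\infty_t L^2_x$ via the transfer principle; the point is only that \emph{some} such estimate holds with $b=0$ allowed on one factor. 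Bilinear complex interpolation between this and Proposition \ref{Prop.1.2} then lowers the exponent $\tfrac12+$ to $\tfrac12-$ at the cost of raising the Sobolev exponents by an arbitrarily small amount, which is harmless since all the hypotheses on $s_0,s_1,s_2$ in Proposition \ref{Prop.1.2} are strict inequalities (or are stable under small perturbation because we work with inhomogeneous spaces and have the slack $s_i+s_j\ge 0$).

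The cleanest route, and the one I would actually write, avoids even proving a new auxiliary estimate: one observes that $\langle |\xi|-|\tau|\rangle^{\half-} = \langle |\xi|-|\tau|\rangle^{\half+}\cdot \langle |\xi|-|\tau|\rangle^{-}$, so $H^{s,\half-} \supset H^{s,\half+}$ is false in the wrong direction — rather, $\|u\|_{H^{s,\half-}} \le \|u\|_{H^{s,\half+}}$, which is the trivial embedding going the convenient way. Hence the statement of the Corollary is genuinely \emph{weaker} input than Proposition \ref{Prop.1.2} only if read naively; what is actually needed is that the $\half-$ on the \emph{right} side still suffices, i.e. we are weakening our hypotheses on $u,v$. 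So the real content is an interpolation that trades a bit of the $L^2_t$-based $b=\half+$ integrability for a bit of spatial regularity. I would set it up as follows: fix $\theta\in(0,1)$ close to $1$ and write $\half- = \theta(\half+) + (1-\theta)\cdot 0$ after adjusting the $+$; interpolate Proposition \ref{Prop.1.2} (applied with slightly larger $s_1,s_2$, call them $s_1',s_2'$, still satisfying its hypotheses) against the trivial bound $\|uv\|_{H^{-s_0,0}} \lesssim \|u\|_{H^{n/2+,0}}\|v\|_{H^{n/2+,0}}$ coming from $H^{n/2+}(\R^n)\hookrightarrow L^\infty(\R^n)$ in the space variable and Cauchy–Schwarz in time. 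Choosing $s_i = \theta s_i' + (1-\theta)(n/2+)$ and letting $\theta\to 1$ recovers any $s_i$ with $s_0+s_1+s_2 > \tfrac{n-1}{2}$, etc., exactly as in Proposition \ref{Prop.1.2}.

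The step I expect to require the most care is making the bilinear interpolation rigorous: wave-Sobolev spaces $H^{s,b}$ form a complex interpolation scale in both parameters jointly, and one needs the bilinear version of Stein's interpolation theorem (or equivalently, one composes the bilinear map with the multiplication operator and interpolates the resulting family of bounded bilinear operators on $L^2\times L^2 \to L^2$ after conjugating by the Fourier multipliers $\langle\xi\rangle^{s}\langle|\tau|-|\xi|\rangle^{b}$). This is standard — it is exactly the mechanism already used inside the proof of Proposition \ref{Prop.1.2} itself (the "bilinear interpolation gives for $0\le\theta\le 1$" step) — so I would simply cite that same principle. A minor technical point: one must check that the trivial endpoint estimate one interpolates against is compatible, meaning its Sobolev exponent on the factor whose $b$ we are lowering is finite; since $n/2+$ is finite this is not an issue, and for $s_0$ one just keeps $-s_0$ fixed throughout the interpolation (interpolating only in the $s_1,s_2,b$ slots). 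Thus the whole proof is three lines: pick $s_i'$ satisfying the hypotheses of Proposition \ref{Prop.1.2} with $s_i' > s_i$ and $s_i' - s_i$ as small as we like, apply that proposition, interpolate with the elementary $L^\infty_x$-based bound, and conclude.
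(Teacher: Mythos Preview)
Your interpolation idea is right, but the endpoint you would ``actually write'' is false. The estimate
\[
\|uv\|_{H^{-s_0,0}} \lesssim \|u\|_{H^{n/2+,\,0}}\|v\|_{H^{n/2+,\,0}}
\]
cannot hold: $H^{\sigma,0}=L^2_t H^{\sigma}_x$, so both factors are only $L^2$ in time, and Cauchy--Schwarz (or H\"older) in $t$ puts the product in $L^1_t$, not in $L^2_t H^{-s_0}_x$. No amount of extra spatial regularity repairs the missing half of time integrability. Hence there is no valid $b=0/b=0$ endpoint to interpolate against.

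The paper fixes exactly this point by choosing an endpoint with $b=\tfrac14+$ on \emph{both} factors:
\[
\|uv\|_{H^{N,0}} \lesssim \|u\|_{H^{N,\frac14+}}\|v\|_{H^{N,\frac14+}},\qquad N>\tfrac{n}{2},
\]
which holds because $H^N$ is an algebra and $H^{N,\frac14+}\hookrightarrow L^4_t H^N_x$ (one-dimensional Sobolev in the hyperbolic variable). Interpolating this with Proposition~\ref{Prop.1.2} brings $b$ down from $\tfrac12+$ to $\tfrac12-$ symmetrically. Your alternative first idea, with $b=0$ on one factor and $b=\tfrac12+$ on the other, does give a true endpoint, but a single bilinear interpolation against Proposition~\ref{Prop.1.2} then lowers $b$ on only one side; getting $\tfrac12-$ on both requires a symmetric endpoint (or further interpolation), which is precisely what the $\tfrac14+$ choice provides in one step. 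There is also a boundary case you did not address: when $s_1+s_2=0$ (e.g.\ $s_1=-s_2$), interpolating toward $(N,N)$ destroys that equality, so the paper instead interpolates with $\|uv\|_{H^{-N,0}}\lesssim\|u\|_{H^{N,\frac14+}}\|v\|_{H^{-N,\frac14+}}$ to preserve it.
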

\begin{proof}
This follows by bilinear interpolation of the estimate of Prop. \ref{Prop.1.2} with the estimate
$$\|uv\|_{H^{N,0}} \lesssim \|u\|_{H^{N,\frac{1}{4}+}} \|v\|_{H^{N,\frac{1}{4}+}}  \, , $$
where, say, $N > \frac{n}{2}$ , which follows by Sobolev apart from the special case $s_1=-s_2,$  in which we interpolate with the estimate
$$\|uv\|_{H^{-N,0}} \lesssim \|u\|_{H^{N,\frac{1}{4}+}} \|v\|_{H^{-N,\frac{1}{4}+}}   $$
in order to save the condition $s_1=-s_2$ .
\end{proof}

\begin{Cor}
\label{Cor.1.2}
If $s_1 >\frac{n-1}{2}$ , $2s_1+s_0 > \frac{n}{2}$ , $ s_1+s_0 \ge 0$ , $0 \le \epsilon < \half$
the following estimate holds
$$ \|uv\|_{H^{-s_0,\epsilon}} \lesssim \|u\|_{H^{s_1+O(\epsilon),\half+}} \|v\|_{H^{-s_0,\half+}} \, . $$
\end{Cor}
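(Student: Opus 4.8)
The plan is to deduce Corollary \ref{Cor.1.2} from Corollary \ref{Cor.1.1}, which is exactly the case $\epsilon=0$ (no modulation weight on the product), by distributing the extra weight $\langle|\tau_0|-|\xi_0|\rangle^{\epsilon}$ by means of an elementary inequality for the modulation variables. First I would dualize, reducing to a trilinear convolution estimate for nonnegative Fourier transforms over $\xi_0+\xi_1+\xi_2=0$, $\tau_0+\tau_1+\tau_2=0$. Writing $\sigma_j=|\tau_j|-|\xi_j|$, the point is that $\operatorname{sgn}(\tau_0)\sigma_0+\operatorname{sgn}(\tau_1)\sigma_1+\operatorname{sgn}(\tau_2)\sigma_2=-(\operatorname{sgn}(\tau_0)|\xi_0|+\operatorname{sgn}(\tau_1)|\xi_1|+\operatorname{sgn}(\tau_2)|\xi_2|)$, whence, using $|\xi_0|\le|\xi_1|+|\xi_2|$,
$$
\langle\sigma_0\rangle\;\lesssim\;\langle\sigma_1\rangle+\langle\sigma_2\rangle+\langle\xi_1\rangle+\langle\xi_2\rangle ,
$$
and since $0\le\epsilon<1$ the same holds with all four terms raised to the power $\epsilon$.

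I then split the convolution into the four corresponding contributions. In the $\langle\xi_1\rangle^{\epsilon}$--contribution the factor is absorbed into the spatial weight of $u$, turning $s_1$ into $s_1+\epsilon$; in the $\langle\xi_2\rangle^{\epsilon}$--contribution one uses $\langle\xi_2\rangle\lesssim\langle\xi_0\rangle\langle\xi_1\rangle$, sends $\langle\xi_1\rangle^{\epsilon}$ into $u$, and absorbs $\langle\xi_0\rangle^{\epsilon}$ either into $u$ (when $|\xi_0|\lesssim|\xi_1|$) or, when $|\xi_0|$ is the largest frequency, by noting $|\xi_0|\sim\max(|\xi_1|,|\xi_2|)$ and recombining with the previous cases. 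In the $\langle\sigma_1\rangle^{\epsilon}$-- and $\langle\sigma_2\rangle^{\epsilon}$--contributions, since $\epsilon<\half$, multiplying the modulation weight $\langle\sigma_i\rangle^{-\frac12-}$ of the relevant factor by $\langle\sigma_i\rangle^{\epsilon}$ only lowers its modulation exponent to some $b'\in(\tfrac14,\tfrac12)$; this is still admissible via the interpolation between Proposition \ref{Prop.1.2} and the asymmetric Sobolev endpoint $\|uv\|_{H^{-s_0,0}}\lesssim\|u\|_{H^{N,1/2+}}\|v\|_{H^{-s_0,0}}$ (valid for $N$ large) and its mirror image (valid since $s_1+s_0\ge0$), arranged so that the loss again lands only on $u$. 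In each piece the remaining estimate is of the type covered by Proposition \ref{Prop.1.2} / Corollary \ref{Cor.1.1} with input exponents $(s_1+O(\epsilon),-s_0)$, and for $\epsilon$ small the three hypotheses $s_1>\tfrac{n-1}{2}$, $2s_1+s_0>\tfrac n2$, $s_1+s_0\ge0$ are exactly what is needed there (with $s_2=-s_0$).

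I expect the main difficulty to be precisely this bookkeeping: guaranteeing that the entire $O(\epsilon)$ loss is carried by the spatial index of $u$, while $v$ stays at exactly $H^{-s_0,\frac12+}$ and the output at exactly $H^{-s_0,\epsilon}$. The delicate configuration is $|\xi_0|\ll|\xi_1|\sim|\xi_2|$, where the extra $\langle\xi\rangle^{\epsilon}$ could a priori be forced onto $v$ or onto the output Sobolev weight; one avoids this by using that there the two high frequencies are interchangeable, so $\langle\xi\rangle^{\epsilon}$ may always be attached to $u$, and it is in this range that the second hypothesis $2s_1+s_0>\tfrac n2$ (rather than a weaker relation) is the one that survives in Proposition \ref{Prop.1.2}.
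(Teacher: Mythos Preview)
Your approach via the modulation splitting $\langle\sigma_0\rangle^{\epsilon}\lesssim\langle\sigma_1\rangle^{\epsilon}+\langle\sigma_2\rangle^{\epsilon}+\langle\xi_1\rangle^{\epsilon}+\langle\xi_2\rangle^{\epsilon}$ is workable in principle but considerably more involved than the paper's argument, and the $\langle\xi_2\rangle^{\epsilon}$ case is not fully resolved as written. In the regime $|\xi_0|\sim|\xi_2|\gg|\xi_1|$, after your redistribution you are effectively forced to apply Proposition~\ref{Prop.1.2} with parameters $(s_0-\epsilon,\,s_1+C\epsilon,\,-s_0)$, and then the constraint $s_0+s_2\ge 0$ becomes $-\epsilon\ge 0$, which fails; your phrase ``recombining with the previous cases'' does not explain how to escape this, since in that regime $\langle\xi_0\rangle^{\epsilon}\sim\langle\xi_2\rangle^{\epsilon}$ and the extra weight cannot be transferred to $u$.

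The paper bypasses all of this bookkeeping with a two-line interpolation: the case $\epsilon=0$ is Proposition~\ref{Prop.1.2}, while at the other endpoint one has, for any $N>\tfrac{n}{2}$,
\[
\|uv\|_{H^{-s_0,\half}}\lesssim\|u\|_{H^{N,\half+}}\|v\|_{H^{-s_0,\half+}}
\]
by Sobolev. Bilinear interpolation between these two estimates (with interpolation parameter $\theta=2\epsilon$) immediately gives the claim with $s_1$ replaced by $(1-2\epsilon)s_1+2\epsilon N=s_1+O(\epsilon)$, and the regularity of $v$ and of the output stay fixed at $-s_0$ automatically. You already invoke interpolation for the $\langle\sigma_i\rangle^{\epsilon}$ pieces; the point is that a \emph{single} interpolation applied to the full bilinear estimate handles everything at once, with no frequency decomposition or case analysis needed.
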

\begin{proof}
By Prop. \ref{Prop.1.2} we have
$$  \|uv\|_{H^{-s_0,0}} \lesssim \|u\|_{H^{s_1,\half+}} \|v\|_{H^{-s_0,\half+}} \, . $$
Moreover for $N > \frac{n}{2}$ we obtain by Sobolev
$$ \|uv\|_{H^{-s_0,\half}} \lesssim \|u\|_{H^{N,\half+}} \|v\|_{H^{-s_0,\half+}}  \, .$$
The result follows by interpolation.
\end{proof}

The following multiplication law is well-known:
\begin{prop} {\bf (Sobolev multiplication law)}
\label{SML}
Let $n\ge 2$ , $s_0,s_1,s_2 \in \R$ . Assume
$s_0+s_1+s_2 > \frac{n}{2}$ , $s_0+s_1 \ge 0$ ,  $s_0+s_2 \ge 0$ , $s_1+s_2 \ge 0$. Then the following product estimate holds:
$$ \|uv\|_{H^{-s_0}} \lesssim \|u\|_{H^{s_1}} \|v\|_{H^{s_2}} \, .$$
\end{prop}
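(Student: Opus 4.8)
The plan is to prove this classical multiplication law by duality together with a dyadic Littlewood--Paley analysis on the Fourier side. First I would use that $\norm{uv}_{H^{-s_0}} = \sup\bigl\{\,\abs{\int (uv)\overline{w}\,dx} : \norm{w}_{H^{s_0}}\le 1\,\bigr\}$, so that the claim is equivalent to the trilinear bound
$$ \Bigl| \int_{\R^n} u\,v\,\overline{w}\,dx \Bigr| \lesssim \norm{u}_{H^{s_1}}\,\norm{v}_{H^{s_2}}\,\norm{w}_{H^{s_0}} \, . $$
Passing to Fourier variables, with $\xi_1,\xi_2$ the frequencies of $u,v$ and $\xi_0$ that of $w$, and absorbing the weights $\angles{\xi_i}^{s_i}$ into new $L^2$ functions $U_0,U_1,U_2$, this reduces to
$$ \int_{\xi_0+\xi_1+\xi_2=0} \angles{\xi_0}^{-s_0}\angles{\xi_1}^{-s_1}\angles{\xi_2}^{-s_2}\,\abs{\widehat{U}_0(\xi_0)}\,\abs{\widehat{U}_1(\xi_1)}\,\abs{\widehat{U}_2(\xi_2)}\,d\sigma \lesssim \norm{U_0}_{L^2}\norm{U_1}_{L^2}\norm{U_2}_{L^2} \, , $$
where $d\sigma$ is surface measure on $\{\xi_0+\xi_1+\xi_2=0\}$; I may assume all the Fourier transforms are nonnegative.

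Next I would decompose each factor dyadically, $\abs{\xi_i}\sim N_i$ ($N_i\ge 1$, with $N_i=1$ the unit-ball block), and use that on the support of the integral the two largest of $N_0,N_1,N_2$ are comparable, since $\xi_0+\xi_1+\xi_2=0$ forces $N_{\max}\le 2N_{\mathrm{med}}$. The elementary ingredient is the convolution estimate
$$ \int_{\xi_0+\xi_1+\xi_2=0} \widehat{f}_0\,\widehat{f}_1\,\widehat{f}_2\,d\sigma \lesssim N_{\min}^{n/2}\,\norm{f_0}_{L^2}\norm{f_1}_{L^2}\norm{f_2}_{L^2} \qquad (\abs{\xi_i}\sim N_i), $$
proved by writing the left side as $\int \widehat{f}_a\,(\widehat{f}_b*\widehat{f}_c)$ where $N_a=N_{\min}$, noting that $\widehat{f}_b*\widehat{f}_c$ is only evaluated on a ball of radius $\sim N_{\min}$, and applying Cauchy--Schwarz together with $\norm{\widehat{f}_b*\widehat{f}_c}_{L^\infty}\le\norm{\widehat{f}_b}_{L^2}\norm{\widehat{f}_c}_{L^2}$.

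Combining these, the dyadically localized trilinear sum is dominated by
$$ \sum_{N_0,N_1,N_2} \angles{N_0}^{-s_0}\angles{N_1}^{-s_1}\angles{N_2}^{-s_2}\,N_{\min}^{n/2}\,a_{N_1}b_{N_2}c_{N_0} \, , $$
where $a_{N_1}=\norm{P_{N_1}U_1}_{L^2}$, $b_{N_2}=\norm{P_{N_2}U_2}_{L^2}$, $c_{N_0}=\norm{P_{N_0}U_0}_{L^2}$ are square-summable with $\ell^2$-norms $\norm{U_i}_{L^2}$. I would then split into three cases according to which index attains $N_{\min}$; in each case I use that the other two $N$'s are comparable to collapse the symbol to a product of two powers, sum the $N_{\min}$-variable against the corresponding sequence by Cauchy--Schwarz, and sum the surviving variable. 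One checks that this closes exactly under the stated hypotheses: $s_0+s_1+s_2>\frac n2$ handles the regime where the small frequency can be as large as the other two (and strictness absorbs the logarithmic endpoint cases), while the pairwise conditions $s_i+s_j\ge 0$ are needed precisely in the regime where the Sobolev exponent sitting at the minimal frequency exceeds $\frac n2$. Equivalently, the same computation can be carried out in physical space via the paraproduct splitting $uv=\Pi_{\mathrm{low\,high}}+\Pi_{\mathrm{high\,low}}+\Pi_{\mathrm{high\,high}}$ together with Bernstein's inequality and H\"older.

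There is no genuinely hard step here; this is a standard estimate, and the only point requiring care is the bookkeeping of the case distinction and the behaviour at the endpoint exponents, which is why all but one of the hypotheses are strict inequalities.
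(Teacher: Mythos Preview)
Your argument is correct and is the standard route to this well-known estimate. The paper itself offers no proof at all---it simply states the result as ``well-known'' and moves on---so there is nothing to compare on the level of approach.

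One small slip in your final sentence: you write ``all but one of the hypotheses are strict inequalities,'' but in fact only the condition $s_0+s_1+s_2>\frac{n}{2}$ is strict, while the three pairwise conditions $s_i+s_j\ge 0$ are non-strict. This does not affect the mathematics; your case analysis correctly identifies that the strict inequality is needed to sum the diagonal regime (to absorb the logarithm when $N_{\min}\sim N_{\max}$), and the pairwise conditions $s_i+s_j\ge 0$ are exactly what is needed to make the power of $N_{\min}$ nonpositive when the exponent at the minimal frequency exceeds $\frac{n}{2}$.
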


We also need the following bilinear estimates in $H^{s,b}$-spaces, which follow as a special case from the results stated in \cite{AFS} and \cite{AFS1}, but proven in these papers only for $n=2$ and $n=3$. We postpone the proof to the appendix.
\begin{prop}
\label{Prop.3.8}
Let $n \ge 2$ .
Assume $s_0+s_1+s_2 \ge \frac{n}{2} + \epsilon$ , $s_1+s_2 \ge \half$ , $s_0+s_1 \ge 0$ , $s_0+s_2 \ge 0$ . Then the following estimates hold for $\epsilon > 0$ sufficiently small:
\begin{align}
\label{a}
\|uv\|_{H^{-s_0,\half + \epsilon}} & \lesssim \|u\|_{H^{s_1,\half+\epsilon}} \|v\|_{H^{s_2,\half+\epsilon}} \, , \\
\label{b}
\|uv\|_{H^{-s_0,\half -2 \epsilon}} & \lesssim \|u\|_{H^{s_1,\half-2\epsilon}} \|v\|_{H^{s_2,\half+\epsilon}}
\end{align}
\end{prop}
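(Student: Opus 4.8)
The plan is to follow the scheme of the proof of Prop.~\ref{Prop.1.2}: by duality and the usual relabeling, with $\sigma_j:=|\tau_j|-|\xi_j|$ and the Fourier transforms normalized to be nonnegative, it suffices to prove the trilinear bound
$$ I := \int_* \frac{\angles{\sigma_0}^{b_0}}{\angles{\xi_0}^{s_0}}\cdot\frac{\widehat{u}_1(\xi_1,\tau_1)}{\angles{\xi_1}^{s_1}\angles{\sigma_1}^{b_1}}\cdot\frac{\widehat{u}_2(\xi_2,\tau_2)}{\angles{\xi_2}^{s_2}\angles{\sigma_2}^{b_2}}\,\widehat{u}_0(\xi_0,\tau_0)\lesssim \|u_0\|_{L^2_{xt}}\|u_1\|_{L^2_{xt}}\|u_2\|_{L^2_{xt}} $$
over $\xi_0+\xi_1+\xi_2=0$, $\tau_0+\tau_1+\tau_2=0$, first in the case $b_0=b_1=b_2=\half+\epsilon$ of \eqref{a}. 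The main tool is the elementary resonance inequality
$$ \angles{\sigma_0}\lesssim \angles{\sigma_1}+\angles{\sigma_2}+\angles{\xi_{\min}}, $$
where $\xi_{\min}$ denotes the input frequency of smaller modulus; it follows on the support from $\big||\tau_1|-|\tau_2|\big|\le|\tau_0|\le|\tau_1|+|\tau_2|$ together with $\big||\xi_1|-|\xi_2|\big|\le|\xi_0|\le|\xi_1|+|\xi_2|$, since $|\xi_1|+|\xi_2|-\big||\xi_1|-|\xi_2|\big|=2\min(|\xi_1|,|\xi_2|)$. Consequently $\angles{\sigma_0}^{\half+\epsilon}\lesssim\angles{\sigma_1}^{\half+\epsilon}+\angles{\sigma_2}^{\half+\epsilon}+\angles{\xi_{\min}}^{\half+\epsilon}$, and I split $I$ into the three corresponding regions.

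In the first two regions the factor $\angles{\sigma_0}^{\half+\epsilon}$ is absorbed by $\angles{\sigma_1}^{\half+\epsilon}$ (resp. $\angles{\sigma_2}^{\half+\epsilon}$), which reduces matters to $\|uv\|_{H^{-s_0,0}}\lesssim\|u\|_{H^{s_1,0}}\|v\|_{H^{s_2,\half+}}$ (resp. the symmetric bound). This is a consequence of the Sobolev multiplication law Prop.~\ref{SML}, which applies because $s_0+s_1+s_2>\frac n2$ and all pairwise sums are $\ge0$ (here $s_1+s_2\ge\half\ge0$), combined with $H^{s_2,\half+}\hookrightarrow L^\infty_tH^{s_2}_x$ and $H^{s_1,0}=L^2_tH^{s_1}_x$ via $L^2_t\cdot L^\infty_t\subset L^2_t$.

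The remaining region $\angles{\sigma_0}^{\half+\epsilon}\lesssim\angles{\xi_{\min}}^{\half+\epsilon}$ — in which, moreover, $\angles{\xi_{\min}}$ dominates all three modulations — is the core of the argument and the main obstacle. Here one distributes the gained weight $\angles{\xi_{\min}}^{\half+\epsilon}$ between the two input frequency weights (using $\angles{\xi_{\min}}\le\angles{\xi_j}$) and separates the frequency interactions into low$\cdot$high$\to$high and high$\cdot$high$\to$low. The low$\cdot$high$\to$high piece is closed by Prop.~\ref{Prop.1.2} and Cor.~\ref{Cor.1.1} applied with the shifted regularities, the hypothesis $s_1+s_2\ge\half$ being exactly what keeps the relevant pairwise sums nonnegative and the total above $\frac{n-1}{2}$ (harmlessly shrinking $\epsilon$ where strict inequalities are needed). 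The high$\cdot$high$\to$low piece, where $|\xi_1|\sim|\xi_2|\gg|\xi_0|$ and the modulations are small, is the one case in which the pure $H^{s,b}$-product estimate loses — absorbing $\angles{\xi_{\min}}^{\half+\epsilon}$ would overspend the index budget in Prop.~\ref{Prop.1.2}. Here one instead invokes the sharper Strichartz/bilinear machinery: Props.~\ref{Str}, \ref{mStr}, \ref{Str'}, the Klainerman--Tataru bilinear estimate Prop.~\ref{TheoremE}, and the $L^p_tL^q_x$ product estimates of Lee--Vargas, which supply the extra gain in $\angles{\xi_0}/\angles{\xi_1}$ needed to pay for $\angles{\xi_{\min}}^{\half+\epsilon}$. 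This is where I expect the real work, and it is also the reason the threshold $s_1+s_2\ge\half$ appears and the reason a pure $H^{s,b}$ approach fails in high dimensions.

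Finally, \eqref{b} follows from \eqref{a} by bilinear complex interpolation against the auxiliary bound $\|uv\|_{H^{-s_0,0}}\lesssim\|u\|_{H^{s_1,0}}\|v\|_{H^{s_2,\half+\epsilon}}$ established above: at interpolation parameter $\theta=3\epsilon/(\half+\epsilon)\in(0,1)$ the output modulation exponent and the $u$-modulation exponent drop from $\half+\epsilon$ to $\half-2\epsilon$, while the $v$-modulation exponent remains $\half+\epsilon$ and the regularities $s_0,s_1,s_2$ are unchanged — which is exactly \eqref{b}.
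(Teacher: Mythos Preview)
Your route is genuinely different from the paper's, and the part you flag as ``the real work'' is exactly where the argument is missing.

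The paper does not decompose according to the resonance inequality at all. It invokes the Foschi--Klainerman bilinear theorem (Prop.~\ref{FK}) as a black box: with $b_0=-\tfrac12-\epsilon$ that theorem yields directly
\[
\|D_-^{\tfrac12+\epsilon}(uv)\|_{\dot H^{-s_0,0}}\lesssim \|u\|_{\dot H^{s_1,\tfrac12+\epsilon}}\|v\|_{\dot H^{s_2,\tfrac12+\epsilon}}
\]
on the line $s_0+s_1+s_2=\frac n2+\epsilon$ with $s_1+s_2\ge\tfrac12$, and then combines this with Prop.~\ref{SML} to pass to inhomogeneous norms and get \eqref{a}. Sign cases ($s_0<0$, $s_1<0$, $s_2<0$) are handled by duality and the fractional Leibniz rule. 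Your interpolation step for \eqref{b} is exactly what the paper does.

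In your scheme the first two regions (one input modulation dominates) are fine and reduce to Prop.~\ref{SML}. The problem is Region~3. For the low$\cdot$high$\to$high piece, after absorbing $\langle\xi_{\min}\rangle^{\frac12+\epsilon}$ into $s_1$ (say), the relevant case of the proof of Prop.~\ref{Prop.1.2} needs $s_0+s_1+s_2-\tfrac12-\epsilon>\frac{n-1}{2}$, i.e.\ $s_0+s_1+s_2>\frac n2+\epsilon$ strictly, whereas the hypothesis only gives $\ge$; ``shrinking $\epsilon$'' does not fix this because the same $\epsilon$ sits in the output exponent. More seriously, for the high$\cdot$high$\to$low piece you only name tools without using them. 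Prop.~\ref{TheoremE} gives $L^{q/2}_tL^{r/2}_x$ control and Prop.~\ref{LV}/Cor.~\ref{Cor.3.3} give $H^{0,\frac12-\frac1q}$ control (and only for $n\ge4$), neither of which reaches $H^{-s_0,\frac12+\epsilon}$ at the threshold $s_1+s_2=\tfrac12$; the needed sharp bilinear gain in $\langle\xi_0\rangle/\langle\xi_1\rangle$ at that threshold is precisely the content of Foschi--Klainerman. In effect you are proposing to reprove Prop.~\ref{FK} from the paper's other ingredients, and that step is not carried out. The clean fix is to do what the paper does: call Prop.~\ref{FK}.
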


Next we formulate a special case of the fundamental estimates for the $L^q_t L^p_x$-norm of the product of solutions of the wave equation due to Lee-Vargas \cite{LV}.
\begin{prop}
\label{LV}
Assume $n \ge 4$ and
$$ \Box \,u = \Box \,v = 0 $$
in $\R^n \times \R$ .
The estimate
$$ \|uv\|_{L^q_t L^2_x} \lesssim (\|u(0)\|_{\dot{H}^{\alpha_1}} + \|(\partial_t u)(0)\|_{\dot{H}^{\alpha_1-1}})(\|v(0)\|_{\dot{H}^{\alpha_2}} + \|(\partial_t v)(0)\|_{\dot{H}^{\alpha_2-1}}) $$
holds, provided $1< q \le 2$ and
\begin{align}
\label{LV2}
\alpha_1 + \alpha_2 &= \frac{n}{2}-\frac{1}{q} \, , \\
\label{LV4}
\frac{1}{q} & <  \frac{n-1}{4} \, , \\
\label{LV12}
\alpha_1,\alpha_2 & <  \frac{n}{2} + \half - \frac{2}{q} \, .
\end{align}
\end{prop}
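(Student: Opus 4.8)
\emph{Proof sketch for Proposition \ref{LV}.} The plan is to deduce the estimate from the sharp bilinear estimates for solutions of the wave equation in \cite{LV}, after the standard reduction to half-wave propagators. Writing $u(t) = \cos(t|D|)u(0) + \frac{\sin(t|D|)}{|D|}(\partial_t u)(0)$, splitting $\cos(t|D|)$ and $\frac{\sin(t|D|)}{|D|}$ into $e^{\pm it|D|}$, and doing the same for $v$, it suffices to prove, for each choice of signs and for $f,g$ with nonnegative Fourier transform, that
$$ \|(e^{it|D|}f)(e^{\pm it|D|}g)\|_{L^q_t L^2_x} \lesssim \||D|^{\alpha_1}f\|_{L^2}\||D|^{\alpha_2}g\|_{L^2} \, , $$
since $\||D|^{\alpha_1}(u(0)\pm |D|^{-1}(\partial_t u)(0))\|_{L^2} \lesssim \|u(0)\|_{\dot H^{\alpha_1}} + \|(\partial_t u)(0)\|_{\dot H^{\alpha_1-1}}$, and likewise for $v$. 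In this form the inequality is a bilinear adjoint restriction estimate for the light cone, since the Fourier transform of the product is supported near $\{\tau = \pm|\xi_1|\pm|\xi_2|,\ \xi=\xi_1+\xi_2\}$. The scaling identity \eqref{LV2} is forced by the homogeneity of both sides, while \eqref{LV4} and \eqref{LV12} are precisely the non-concentration conditions under which the Lee--Vargas bilinear cone estimates hold; for $n\ge 4$ the cone carries at least three nonvanishing principal curvatures, which is what makes these estimates available in the required range.

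To run the argument concretely one decomposes into Littlewood--Paley pieces $|\xi_1|\sim\lambda_1$, $|\xi_2|\sim\lambda_2$ with $\lambda_1\ge\lambda_2$ by symmetry, and then performs a Whitney-type angular decomposition of the two frequency supports into caps, so that in each piece the supports are either separated in angle by a dyadic amount $2^{-j}$ (transversal interaction) or essentially parallel (the diagonal $j$). On the transversal pieces the bilinear estimate of \cite{LV} --- or, more elementarily, bilinear restriction for the cone interpolated against the trivial $\|uv\|_{L^2_tL^2_x}$ bound and the Strichartz estimates of Proposition \ref{Str} --- applies and produces a negative power of $2^j$ together with a negative power of $\lambda_2/\lambda_1$; summing the geometric series in $j$ and in the dyadic frequencies is then legitimate. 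Here the hypotheses \eqref{LV4} and \eqref{LV12} enter exactly to make, respectively, the summation over the dyadic scales and the handling of the near-parallel caps succeed.

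The main obstacle is the parallel/diagonal regime $\xi_1\parallel\xi_2$ in the same-sign interaction $e^{it|D|}e^{it|D|}$, where no transversality is available: there the product lives near a lower-dimensional stratum of the cone, the only gain comes from the curvature in the radial direction, and the estimate is sharp, so it must be extracted from the strict inequalities in \eqref{LV12} --- this is where $n\ge 4$ and $q>1$ are genuinely used (the opposite-sign interaction $e^{it|D|}e^{-it|D|}$ is comparatively harmless, being automatically transversal away from $\xi_1,\xi_2$ parallel with equal length). Once each fixed-sign, frequency-localized piece is bounded with a gain summable in $\lambda_1\ge\lambda_2$, reassembling the half-wave components and undoing the frequency localizations is routine, and rewriting the weights $\lambda_i^{\alpha_i}$ as the homogeneous Sobolev norms of the data yields the claimed estimate.
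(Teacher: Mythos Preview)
Your sketch is not wrong in spirit, but it is doing vastly more than what the paper does. The paper's entire proof of this proposition is a one-line citation: the estimate is a special case of \cite{LV}, Theorem~1.1, and the ``proof'' consists only of checking that the hypotheses \eqref{LV2}, \eqref{LV4}, \eqref{LV12} match the conditions of that theorem (with $p=2$ on the spatial side). In other words, the proposition is stated precisely so that it can be read off from Lee--Vargas as a black box.

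What you have written is instead an outline of how one would \emph{prove} the Lee--Vargas bilinear cone estimate itself: half-wave reduction, dyadic frequency decomposition, Whitney angular decomposition, transversal vs.\ parallel interactions, and so on. That is the architecture of the proof in \cite{LV}, and your identification of where \eqref{LV4} and \eqref{LV12} enter (summability over scales and the near-parallel regime, respectively) is broadly correct. But none of this is needed here --- the paper treats Theorem~1.1 of \cite{LV} as given, and the only work is the bookkeeping that translates their parameters into the present $\alpha_1,\alpha_2,q$. If you want to match the paper, replace your sketch by that translation; if you want to keep your outline, be aware that you are reproving the cited theorem rather than applying it.
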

\begin{proof}
This follows by easy calculations from \cite{LV}, Theorem 1.1.
\end{proof}
\begin{Cor}
\label{Cor.3.3}
If (\ref{LV2}),(\ref{LV4}) and (\ref{LV12}) are satisfied for some $1<q\le 2$ and additonally $\alpha_1,\alpha_2 \ge 0$, the following estimate holds
$$ \|uv\|_{H^{0,\half-\frac{1}{q}}} \lesssim \|u\|_{H^{\alpha_1,\half+}} \|v\|_{H^{\alpha_2,\half+}} \, . $$
\end{Cor}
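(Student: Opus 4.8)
The plan is to deduce the estimate from Proposition~\ref{LV} by the standard bilinear transfer principle, followed by a sharp dual--Strichartz embedding. First I would lift Proposition~\ref{LV} from free waves to general functions in $H^{\alpha_i,\half+}$. Decomposing the spacetime Fourier transform of $u$ in the variables $(\rho,\xi)$ with $\rho=\tau\mp|\xi|$, one writes $u=\sum_{\pm}\int_{\R}e^{it\rho}\,e^{\pm it|D|}g_{\pm}(\rho)\,d\rho$ with $\|u\|_{H^{\alpha_1,b}}^2\sim\sum_{\pm}\int_{\R}\angles{\rho}^{2b}\|g_{\pm}(\rho)\|_{H^{\alpha_1}}^2\,d\rho$, and similarly for $v$ with profiles $h_{\pm}$. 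Since $e^{it(\rho_1+\rho_2)}$ is unimodular in $t$ it does not affect the $L^q_t L^2_x$--norm, so Minkowski's integral inequality reduces matters to estimating $\|(e^{\pm it|D|}g)(e^{\pm it|D|}h)\|_{L^q_t L^2_x}$ for fixed profiles. A free wave $e^{\pm it|D|}g$ has Cauchy data $(g,\pm i|D|g)$, so both data norms in Proposition~\ref{LV} reduce to $\|g\|_{\dot H^{\alpha_1}}\le\|g\|_{H^{\alpha_1}}$, using $\alpha_1\ge0$ (and likewise $\alpha_2\ge0$); the hypotheses \eqref{LV2}, \eqref{LV4}, \eqref{LV12} are exactly the ones assumed. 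Integrating the resulting pointwise bound in $\rho_1,\rho_2$ and applying Cauchy--Schwarz with the weight $\angles{\rho}^{-\half-}\in L^2_{\rho}$ gives $\|uv\|_{L^q_t L^2_x}\lesssim\|u\|_{H^{\alpha_1,\half+}}\|v\|_{H^{\alpha_2,\half+}}$.

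Next I would upgrade $L^q_t L^2_x$ to $H^{0,\half-\frac1q}$, that is, prove $\|F\|_{H^{0,\half-\frac1q}}\lesssim\|F\|_{L^q_t L^2_x}$ for $1<q\le2$. By duality this is equivalent to the embedding $H^{0,\half-\frac1{q'}}\hookrightarrow L^{q'}_t L^2_x$ with $q'=q/(q-1)\in[2,\infty)$. For the latter I would apply Plancherel in $x$ and Minkowski's inequality ($q'\ge2$) to reduce to the one--dimensional statement $\|h\|_{L^{q'}_t}\lesssim\|\angles{|D_t|-c}^{\,\half-\frac1{q'}}h\|_{L^2_t}$ on each spatial frequency fibre, with $c=|\xi|$ and with constant independent of $c\ge0$. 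Splitting the time frequency into the half-lines $\pm\tau>0$ and translating by $\mp c$ (isometric on both $L^{q'}_t$ and $L^2_t$) turns this into the endpoint Sobolev embedding $H^{\half-\frac1{q'}}(\R)\hookrightarrow L^{q'}(\R)$. Composing with the first step yields the corollary.

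The main obstacle --- and the only point where the hypotheses are used sharply --- is the endpoint in this second step. A crude interpolation of $H^{0,\half+}\hookrightarrow L^\infty_t L^2_x$ with $H^{0,0}=L^2_{t,x}$ yields only $L^q_t L^2_x\hookrightarrow H^{0,\half-\frac1q-}$, which is too weak; the exact modulation exponent $\half-\frac1q$ is recovered precisely because the assumption $q>1$ makes $q'$ finite, so that the one--dimensional Sobolev embedding holds at its sharp regularity $\half-\frac1{q'}$. The remaining bookkeeping --- convergence of the $\rho$-integrals, the sum over the finitely many signs, and the reduction from $\dot H$ to $H$ --- is routine.
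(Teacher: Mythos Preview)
Your proof is correct and follows exactly the same approach as the paper: apply Proposition~\ref{LV} to free waves, lift to $H^{\alpha_i,\half+}$ via the transfer principle, then use the embedding $L^q_t L^2_x \hookrightarrow H^{0,\half-\frac{1}{q}}$. The paper's own proof states these two steps in a single sentence without detail; you have simply unpacked both the transfer principle and the dual Sobolev embedding explicitly, including the role of the hypotheses $\alpha_1,\alpha_2\ge 0$ and $q>1$.
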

\begin{proof}
The proposition implies
$$\| e^{\pm it D} f e^{\pm_1it D} g \|_{L^q_t L^2_x} \lesssim \|f\|_{\dot{H}^{\alpha_1}} \|g\|_{\dot{H}^{\alpha_2}} \, ,$$
so that the claimed estimate follows by the transfer principle combined with the estimate
$\|uv\|_{H^{0,\half-\frac{1}{q}}} \lesssim\|uv\|_{L^q_t L^2_x}$ .
\end{proof}

We now come to the definition of the solution spaces, which are very similar to the spaces introduced by \cite{KS}. We prepare this by defining a modification of the standard $L^q_t L^r_x$-spaces. 
\begin{Def}
\label{Def.1.2}
If $1 \le q, r \le
\infty$, $u \in {\mathcal S}'$ and $\widehat u$ is a tempered function, set $$
\|u\|_{{\mathcal L}_t^q {\mathcal L}_x^r} = \sup \left\{ \int_{\R^{1+n}} \abs{ \widehat u (\tau,\xi)
} \widehat v(\tau,\xi) \, d\tau d\xi : v \in {\mathcal S} , \widehat v \ge 0 ,
 \|v\|_{L^{q'}_t L^{r'}_x} = 1 \right\}, $$
where $1 = \frac{1}{q} + \frac{1}{q'}$ and $1 = \frac{1}{r}
+ \frac{1}{r'}$. Let ${\mathcal L}_t^q {\mathcal L}_x^r$ be the corresponding subspace of
${\mathcal S}'$.
\end{Def}

This is a translation invariant norm and it only
depends on the size of the Fourier transform.  Observe that 
${\mathcal L}^2_t {\mathcal L}^2_x = L^2_t L^2_x$  and
$$
 \|u\|_{{\mathcal L}_t^q {\mathcal L}_x^r} \le \|u\|_{L^{q}_t L^{r}_x}\quad \text{whenever} \quad
\widehat u \ge 0. 
$$

\begin{Def}
\label{Def}
Our solution spaces are defined as follows: \\
$\{ (u,v) \in F^s  \times G^r \} \, , $ where \\
1. in the case $n \ge 4$ :
\begin{align*}
\|u\|_{F^s} &:= \|\Lambda_+ u\|_{H^{s-1,\half+\epsilon}} +\| \Lambda_+^{\half+2\epsilon} \Lambda^{-\frac{3}{4} +3\epsilon} \Lambda_-^{\half} u\|_{{\mathcal L}_t^1 {\mathcal L}_x^{4n}} \\
\|v\|_{G^r} &:= \|\Lambda_+ v\|_{H^{r-1,\half+\epsilon}}  \ ,
\end{align*}
2. in the case $n=3$ :
\begin{align*}
\|u\|_{F^s} &:= \|\Lambda_+ u\|_{H^{s-1,\frac{3}{4}+\epsilon}} \\
\|v\|_{G^r} &:= \|\Lambda_+ v\|_{H^{r-1,\half+\epsilon}}  \ ,
\end{align*}
where $\epsilon > 0$ is sufficiently small. $F^s_T$ and $G^r_T$ denotes the restriction to the time interval $[0,T]$.
\end{Def}
This is a Banach space (\cite{KS}, Prop. 4.2).

Next we recall some fundamental properties of the ${\mathcal L}_t^q {\mathcal L}_x^r$-spaces, which were given by \cite{KS}, starting with a H\"older-type estimate.

\begin{prop}\label{Prop.4.3} Suppose $\frac{1}{q} =
\frac{1}{q_{1}} + \frac{1}{q_{2}}$ and $\frac{1}{r} = \frac{1}{r_{1}} +
\frac{1}{r_{2}}$, where the $q$'s and $r$'s all belong to $[1,\infty]$.
Then $$
\|uv\|_{{\mathcal L}^q_t {\mathcal L}^r_x}  \le \|u\|_{{\mathcal L}^{q_1}_t {\mathcal L}^{r_1}_x}  \| v\|_{L^{q_2}_t  L^{r_2}_x} \, .
$$
for all $v$  with $\widehat
v \ge 0$.
\end{prop}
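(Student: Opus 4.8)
The plan is to unwind the definition of the $\mathcal{L}^q_t\mathcal{L}^r_x$-norm as a duality supremum and reduce everything to the ordinary Hölder inequality in $L^{q_2}_tL^{r_2}_x$ together with the observation that all these norms depend only on the modulus of the Fourier transform. First I would fix $v$ with $\widehat{v}\ge 0$ and a test function $w\in\mathcal{S}$ with $\widehat{w}\ge 0$ and $\|w\|_{L^{q'}_tL^{r'}_x}=1$, and try to bound $\int_{\R^{1+n}}|\widehat{uv}(\tau,\xi)|\,\widehat{w}(\tau,\xi)\,d\tau\,d\xi$. The key pointwise observation is that $|\widehat{uv}| = |\widehat{u}\ast\widehat{v}| \le |\widehat{u}|\ast\widehat{v}$ since $\widehat{v}\ge 0$; hence $\widehat{uv}\preceq \widehat{u_1}\,\widehat{v}$ in the multiplicative sense after passing to physical space, i.e. the function $\widetilde{u}$ with $\widehat{\widetilde{u}}=|\widehat{u}|$ satisfies $|\widehat{uv}|\le \widehat{\widetilde{u}v}$ pointwise (because $\widehat{\widetilde{u}v}=\widehat{\widetilde u}\ast\widehat v = |\widehat u|\ast\widehat v$). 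Therefore $\int |\widehat{uv}|\,\widehat{w}\le \int \widehat{\widetilde u v}\,\widehat{w}$, and by Parseval (all Fourier transforms being nonnegative) this equals $\int_{\R^{1+n}} \widetilde{u}\,v\,\check{w}$-type pairing — more cleanly, I would write it as $\langle \widetilde{u}v, w\rangle$ up to harmless conjugations, and here $\widetilde u v \in L^1_{\mathrm{loc}}$ by the bounds to come.

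Next I would apply ordinary Hölder twice: with $\tfrac1q=\tfrac1{q_1}+\tfrac1{q_2}$ in time and $\tfrac1r=\tfrac1{r_1}+\tfrac1{r_2}$ in space, and then the pairing with $w$ uses $\tfrac1{q'}=1-\tfrac1q$, $\tfrac1{r'}=1-\tfrac1r$. Concretely, $\big|\langle \widetilde u v, w\rangle\big| \le \|\widetilde u v\|_{L^q_tL^r_x}\|w\|_{L^{q'}_tL^{r'}_x} = \|\widetilde u v\|_{L^q_tL^r_x} \le \|\widetilde u\|_{L^{q_1}_tL^{r_1}_x}\|v\|_{L^{q_2}_tL^{r_2}_x}$. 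Since $\widehat{\widetilde u}=|\widehat u|\ge 0$, the first factor is by definition (or rather by the observation displayed just after Def.~\ref{Def.1.2}, $\|\cdot\|_{\mathcal L^{q_1}_t\mathcal L^{r_1}_x}\le\|\cdot\|_{L^{q_1}_tL^{r_1}_x}$ when the Fourier transform is nonnegative, together with the fact that $\|\widetilde u\|_{\mathcal L^{q_1}_t\mathcal L^{r_1}_x}=\|u\|_{\mathcal L^{q_1}_t\mathcal L^{r_1}_x}$ because the $\mathcal{L}$-norm sees only $|\widehat u|$) equal to $\|u\|_{\mathcal L^{q_1}_t\mathcal L^{r_1}_x}$. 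Actually the cleanest route avoids $L^{q_1}_tL^{r_1}_x$ of $\widetilde u$ altogether: insert the definition of $\|\widetilde u\|_{\mathcal{L}^{q_1}_t\mathcal{L}^{r_1}_x}$ directly, testing against $v\cdot(\text{something})$ — but one has to be careful that the admissible test functions there have nonnegative Fourier transform, which $v$ times a physical-space factor need not. So I would instead first reduce to the case $\widehat u\ge0$ (replacing $u$ by $\widetilde u$, which changes neither side), and then the chain above goes through verbatim with $\widetilde u = u$.

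Taking the supremum over all admissible $w$ then yields $\|uv\|_{\mathcal{L}^q_t\mathcal{L}^r_x}\le \|u\|_{\mathcal{L}^{q_1}_t\mathcal{L}^{r_1}_x}\,\|v\|_{L^{q_2}_tL^{r_2}_x}$ as claimed. The only genuinely delicate point — and the one I would be most careful about — is the interchange of the roles of the duality pairing and the translation-invariant $\mathcal{L}$-norm: one must verify that after reducing to $\widehat u\ge 0$, the pairing $\int|\widehat{uv}|\,\widehat w$ is genuinely controlled by $\|uv\|_{L^q_tL^r_x}\|w\|_{L^{q'}_tL^{r'}_x}$, which is legitimate because with $\widehat u,\widehat v,\widehat w\ge 0$ we have $\int|\widehat{uv}|\widehat w = \int \widehat{uv}\,\widehat w = \int (uv)\,\overline{\widehat{\widehat w}}$-type identity collapsing to $\int (uv)(x,t)\,w(-x,-t)$ or similar, a bona fide bilinear pairing bounded by Hölder. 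Everything else is routine Hölder bookkeeping, and no regularity of $u,v$ beyond membership in the stated spaces is needed since all quantities in sight have nonnegative Fourier transforms and the estimates are purely at the level of absolute values of Fourier transforms.
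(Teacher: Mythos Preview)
Your argument has a genuine gap at the crucial step. After reducing to $\widehat u\ge0$ and applying H\"older in physical space, you land on $\|u\|_{L^{q_1}_tL^{r_1}_x}$ on the right-hand side, not $\|u\|_{\mathcal L^{q_1}_t\mathcal L^{r_1}_x}$. Since only the inequality $\|u\|_{\mathcal L^{q_1}_t\mathcal L^{r_1}_x}\le\|u\|_{L^{q_1}_tL^{r_1}_x}$ is available (not the reverse), this does not give the stated conclusion; in general the $\mathcal L$-norm can be strictly smaller than the $L$-norm even when $\widehat u\ge0$.

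The fix is precisely the route you mention and then wrongly dismiss. Write
\[
\int|\widehat{uv}|\,\widehat w\;\le\;\int(|\widehat u|\ast\widehat v)\,\widehat w
\;=\;\int|\widehat u|(\zeta)\Bigl(\int\widehat v(\eta)\,\widehat w(\zeta+\eta)\,d\eta\Bigr)d\zeta
\;=\;\int|\widehat u|\,\widehat W,
\]
where $\widehat W=\widehat v(-\cdot)\ast\widehat w$ is (up to harmless constants) the Fourier transform of $W:=\bar v\,w$. Since $\widehat v\ge0$ and $\widehat w\ge0$, this convolution is \emph{nonnegative}, so $W$ is an admissible test function in the definition of $\|u\|_{\mathcal L^{q_1}_t\mathcal L^{r_1}_x}$. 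Your worry that ``$v$ times a physical-space factor need not'' have nonnegative Fourier transform is unfounded: the product of two functions with nonnegative Fourier transform again has nonnegative Fourier transform, because convolution preserves nonnegativity. Now H\"older gives
\[
\|W\|_{L^{q_1'}_tL^{r_1'}_x}\le\|v\|_{L^{q_2}_tL^{r_2}_x}\|w\|_{L^{q'}_tL^{r'}_x},
\]
since $\tfrac1{q_1'}=\tfrac1{q'}+\tfrac1{q_2}$ and $\tfrac1{r_1'}=\tfrac1{r'}+\tfrac1{r_2}$. Therefore $\int|\widehat u|\,\widehat W\le\|u\|_{\mathcal L^{q_1}_t\mathcal L^{r_1}_x}\|W\|_{L^{q_1'}_tL^{r_1'}_x}$, and taking the supremum over $w$ yields the claim. (A routine density/truncation argument handles the requirement that the test function be Schwartz when $v$ is not.)

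For the record, the paper itself does not supply a proof of this proposition; it is quoted from Klainerman--Selberg \cite{KS}, where the argument is essentially the one above.
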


The following duality argument holds. 
\begin{prop}\label{Prop.4.5} Let $1 \le a,b,q,r \le
\infty$.
{
\renewcommand{\theenumi}{\alph{enumi}}
\renewcommand{\labelenumi}{(\theenumi)}
\begin{enumerate}
\item If
\begin{equation}\label{DualityA}
\|G\|_{L^{a'}_t L^{b'}_x} \lesssim \| \Lambda^{\alpha}
\Lambda_{-}^{\beta} G\|_{L^{q'}_t L^{r'}_x} 
\end{equation} 
for all $G$ ,
 then
$$
\|F\|_{L^{q}_t L^{r}_x} \lesssim \| \Lambda^{\alpha}
\Lambda_{-}^{\beta} F\|_{L^{a}_t L^{b}_x} 
$$
for all $F$ .
\item If \eqref{DualityA} holds for all $G$ with $\widehat G
\ge 0$, then
$$
\|F\|_{{\mathcal L}^{q}_t {\mathcal L}^{r}_x} \lesssim \| \Lambda^{\alpha}
\Lambda_{-}^{\beta} F\|_{{\mathcal L}^{a}_t {\mathcal L}^{b}_x} \, .
$$
for all $F$.
\end{enumerate}
}
\end{prop}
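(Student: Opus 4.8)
The plan is to prove both parts by a clean duality argument, resting on two structural facts about the multiplier $T:=\Lambda^{\alpha}\Lambda_-^{\beta}$: its symbol $m(\tau,\xi)=\langle\xi\rangle^{\alpha}\langle|\tau|-|\xi|\rangle^{\beta}$ is a \emph{positive} (in particular real) function, so that $T$ is formally self-adjoint (i.e.\ $\int (Tu)\overline v=\int u\,\overline{Tv}$) and $T^{-1}$ maps the cone $\{\widehat w\ge 0\}$ into itself. One works throughout on a subclass dense in the relevant spaces (for $\beta<0$, functions whose space-time Fourier transform is smooth and compactly supported away from the light cone; $\mathcal S$ otherwise) so that all the multiplier manipulations below make sense, and then passes to the limit.

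For part (a), set $H:=\Lambda^{\alpha}\Lambda_-^{\beta}F$, so $F=\Lambda^{-\alpha}\Lambda_-^{-\beta}H$ and it suffices to show $\|\Lambda^{-\alpha}\Lambda_-^{-\beta}H\|_{L^{q}_tL^{r}_x}\lesssim\|H\|_{L^{a}_tL^{b}_x}$. By $L^{q}_tL^{r}_x$--$L^{q'}_tL^{r'}_x$ duality pick $g$ with $\|g\|_{L^{q'}_tL^{r'}_x}=1$ realizing the norm; moving the (real) multiplier onto the other factor and applying Hölder gives
$$ \|\Lambda^{-\alpha}\Lambda_-^{-\beta}H\|_{L^{q}_tL^{r}_x}=\left|\int H\,\overline{\Lambda^{-\alpha}\Lambda_-^{-\beta}g}\right|\le\|H\|_{L^{a}_tL^{b}_x}\,\|\Lambda^{-\alpha}\Lambda_-^{-\beta}g\|_{L^{a'}_tL^{b'}_x}. $$
Applying the hypothesis \eqref{DualityA} with $G:=\Lambda^{-\alpha}\Lambda_-^{-\beta}g$, for which $\Lambda^{\alpha}\Lambda_-^{\beta}G=g$, bounds the last factor by $\|g\|_{L^{q'}_tL^{r'}_x}=1$, and the claim follows.

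For part (b) the same scheme applies with the $L^p$-pairings replaced by the bilinear pairing $\int|\widehat u|\,\widehat v$ defining the $\mathcal L$-norms, and the point is precisely that now \eqref{DualityA} is only invoked for $G$ with $\widehat G\ge 0$. Writing again $H:=\Lambda^{\alpha}\Lambda_-^{\beta}F$ and testing $F$ against $v\in\mathcal S$ with $\widehat v\ge 0$, $\|v\|_{L^{q'}_tL^{r'}_x}=1$, positivity of $m$ gives $|\widehat F|=m^{-1}|\widehat H|$, so with $w:=\Lambda^{-\alpha}\Lambda_-^{-\beta}v$ (for which $\widehat w=m^{-1}\widehat v\ge 0$) one has
$$ \int|\widehat F|\,\widehat v=\int|\widehat H|\,\widehat w\le\|H\|_{{\mathcal L}^{a}_t{\mathcal L}^{b}_x}\,\|w\|_{L^{a'}_tL^{b'}_x}. $$
Since $\widehat w\ge 0$, \eqref{DualityA} applies with $G:=w$ and bounds $\|w\|_{L^{a'}_tL^{b'}_x}\lesssim\|\Lambda^{\alpha}\Lambda_-^{\beta}w\|_{L^{q'}_tL^{r'}_x}=\|v\|_{L^{q'}_tL^{r'}_x}=1$. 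Taking the supremum over admissible $v$ yields $\|F\|_{{\mathcal L}^{q}_t{\mathcal L}^{r}_x}\lesssim\|H\|_{{\mathcal L}^{a}_t{\mathcal L}^{b}_x}=\|\Lambda^{\alpha}\Lambda_-^{\beta}F\|_{{\mathcal L}^{a}_t{\mathcal L}^{b}_x}$.

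The argument is essentially formal, so the only thing requiring attention — the main, and rather minor, obstacle — is justifying the Fourier-multiplier manipulations (sliding $T$ across the pairing, and verifying that $T^{-1}$ preserves the dense test class and the positivity of the Fourier transform) on a class dense in the spaces at hand; all of this is immediate from the positivity of the symbol $m$.
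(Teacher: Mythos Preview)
Your proof is correct and follows the standard duality argument; the paper itself does not give an independent proof but simply cites \cite{KS}, Proposition~4.5, where the same approach (self-adjointness of the real positive multiplier $\Lambda^\alpha\Lambda_-^\beta$ combined with $L^p$--$L^{p'}$ duality, and for part (b) the built-in positivity in the definition of the ${\mathcal L}$-norm) is used.
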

\begin{proof}
\cite{KS}, Proposition 4.5.
\end{proof}

The next proposition shows that a Sobolev type embedding also carries over to the ${\mathcal L}_t^q {\mathcal L}_x^r$-spaces.
\begin{prop}
\label{Cor.4.6} Let $1 \le a,b,q,r \le \infty$ , $\alpha,\beta \in {\mathbb R}$ .
If
$$ \|\Lambda^{\alpha} \Lambda_{-}^{\beta}u\|_{L^q_t L^r_x} \lesssim \|u\|_{ L^a_t L^b_x}  $$
for all $u$ with $\widehat{u} \ge 0$ , then
$$\|\Lambda^{\alpha} \Lambda_{-}^{\beta}u\|_{{\mathcal L}^q_t {\mathcal L}^r_x} \lesssim \|u\|_{ {\mathcal L}^a_t {\mathcal L}^b_x } \, . $$
\end{prop}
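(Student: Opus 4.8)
The final statement is Proposition \ref{Cor.4.6}, which asserts that a space-time Sobolev-type mapping estimate of the form $\|\Lambda^{\alpha}\Lambda_-^{\beta}u\|_{L^q_tL^r_x}\lesssim\|u\|_{L^a_tL^b_x}$ for functions with nonnegative Fourier transform upgrades automatically to the same estimate with $L$-spaces replaced by $\mathcal{L}$-spaces. The natural approach is to unfold the definition of the $\mathcal{L}^q_t\mathcal{L}^r_x$ norm as a supremum over test functions $v\in\mathcal S$ with $\widehat v\ge 0$ and $\|v\|_{L^{q'}_tL^{r'}_x}=1$, and to move the Fourier multiplier $\Lambda^{\alpha}\Lambda_-^{\beta}$ from $u$ onto $v$ by Parseval, exploiting that these multipliers have real (indeed nonnegative, or at least sign-definite-symbol) symbols so that duality pairings behave well on the Fourier side.

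Concretely, I would argue as follows. Fix $u$ with $\widehat u$ a tempered function; we may clearly assume $\widehat u\ge 0$ since the $\mathcal{L}$-norms depend only on $|\widehat u|$ (and applying $\Lambda^\alpha\Lambda_-^\beta$, whose symbols $\langle\xi\rangle^\alpha$, $\langle|\tau|-|\xi|\rangle^\beta$ are positive, preserves nonnegativity of the Fourier transform). By definition,
\[
\|\Lambda^{\alpha}\Lambda_-^{\beta}u\|_{\mathcal{L}^q_t\mathcal{L}^r_x}
=\sup\Big\{\int_{\R^{1+n}}\langle\xi\rangle^\alpha\langle|\tau|-|\xi|\rangle^\beta\,\widehat u(\tau,\xi)\,\widehat v(\tau,\xi)\,d\tau\,d\xi:\ v\in\mathcal S,\ \widehat v\ge0,\ \|v\|_{L^{q'}_tL^{r'}_x}=1\Big\}.
\]
For each such $v$, the integrand equals $\widehat u(\tau,\xi)\,\big(\langle\xi\rangle^\alpha\langle|\tau|-|\xi|\rangle^\beta\widehat v(\tau,\xi)\big)$, i.e. $\widehat u$ paired against $\widehat w$ with $w=\Lambda^\alpha\Lambda_-^\beta v$, and $\widehat w\ge 0$ since the symbols are nonnegative. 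Hence each term is bounded by $\|u\|_{\mathcal{L}^a_t\mathcal{L}^b_x}\,\|w\|_{L^{a'}_tL^{b'}_x}$ by the very definition of $\|u\|_{\mathcal{L}^a_t\mathcal{L}^b_x}$ (testing against $w$, after normalizing $w$). So it remains to control $\|w\|_{L^{a'}_tL^{b'}_x}=\|\Lambda^\alpha\Lambda_-^\beta v\|_{L^{a'}_tL^{b'}_x}$ by $\|v\|_{L^{q'}_tL^{r'}_x}=1$. This is exactly the dual statement of the hypothesis: the assumed estimate $\|\Lambda^\alpha\Lambda_-^\beta u\|_{L^q_tL^r_x}\lesssim\|u\|_{L^a_tL^b_x}$ (for $\widehat u\ge 0$) is equivalent, by Proposition \ref{Prop.4.5}(a) or its proof, to $\|\Lambda^\alpha\Lambda_-^\beta v\|_{L^{a'}_tL^{b'}_x}\lesssim\|v\|_{L^{q'}_tL^{r'}_x}$ for all $v$ (with $\widehat v\ge0$). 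Combining, each term in the supremum is $\lesssim\|u\|_{\mathcal{L}^a_t\mathcal{L}^b_x}$, uniformly in $v$, and taking the supremum over $v$ yields the claim.

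The one point requiring a little care — and the only real obstacle — is the duality step: one must be sure that the hypothesis, stated only for $u$ with $\widehat u\ge0$, dualizes correctly to an estimate valid for the test functions $v$ (also with $\widehat v\ge0$), so that the multiplier $\Lambda^\alpha\Lambda_-^\beta$ genuinely transfers. This is precisely the content of part (b) of Proposition \ref{Prop.4.5} (or is proved by the identical pigeonhole-on-the-Fourier-side argument used there), so I would simply invoke it: apply Proposition \ref{Prop.4.5}(b) with the roles of $(q,r)$ and $(a,b)$ as given, obtaining $\|\Lambda^\alpha\Lambda_-^\beta F\|_{\mathcal{L}^q_t\mathcal{L}^r_x}\lesssim\|F\|_{\mathcal{L}^a_t\mathcal{L}^b_x}$ directly. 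In fact the cleanest write-up is: the hypothesis is of the form \eqref{DualityA} after relabeling (read $G=\Lambda^\alpha\Lambda_-^\beta u$, or more directly note the hypothesis says $\|\Lambda^\alpha\Lambda_-^\beta u\|_{L^q_tL^r_x}\lesssim\|u\|_{L^a_tL^b_x}$, which is \eqref{DualityA} with $(a',b')\leftrightarrow(q,r)$, $(q',r')\leftrightarrow(a,b)$, $\alpha\to-\alpha$, $\beta\to-\beta$ up to the trivial observation that $\Lambda^{-\alpha}\Lambda_-^{-\beta}$ is the inverse multiplier), and then part (b) gives exactly the conclusion. Thus Proposition \ref{Cor.4.6} is an immediate corollary of Proposition \ref{Prop.4.5}(b), and the proof reduces to checking this bookkeeping.
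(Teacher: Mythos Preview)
The paper itself gives no argument here (it simply cites \cite{KS}, Cor.~4.6), so there is nothing to compare against at the level of details; your outline --- unfold the $\mathcal L$-norm as a supremum, move the positive-symbol multiplier onto the test function, and close with a dual $L$-estimate --- is exactly the natural argument and presumably the one in \cite{KS}.

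There is, however, a bookkeeping slip in your last paragraph. Under the relabeling $(a',b')\leftrightarrow(q,r)$, $(q',r')\leftrightarrow(a,b)$, $(\alpha,\beta)\to(-\alpha,-\beta)$, the hypothesis \eqref{DualityA} indeed becomes the assumption of Proposition~\ref{Cor.4.6}, but the \emph{conclusion} of Proposition~\ref{Prop.4.5}(b) becomes
\[
\|F\|_{\mathcal L^{a'}_t\mathcal L^{b'}_x}\lesssim\|\Lambda^{-\alpha}\Lambda_-^{-\beta}F\|_{\mathcal L^{q'}_t\mathcal L^{r'}_x},
\]
i.e.\ the dual-exponent $\mathcal L$-estimate, not the one you want. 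The correct route is the one you sketched first: one needs the $L$-dual $\|\Lambda^\alpha\Lambda_-^\beta v\|_{L^{a'}_tL^{b'}_x}\lesssim\|v\|_{L^{q'}_tL^{r'}_x}$ for $\widehat v\ge0$, and \emph{then} Proposition~\ref{Prop.4.5}(b) (with $(\alpha,\beta)\to(-\alpha,-\beta)$ and the \emph{same} $a,b,q,r$) delivers the claim. That $L$-dual follows from ordinary $L^p$-duality for real-symbol multipliers whenever the assumed estimate holds for \emph{all} $u$ --- which is the case in every application in this paper, where the hypothesis is a genuine Sobolev-type embedding --- but under the weaker hypothesis restricted to $\widehat u\ge0$ this duality step is not supplied by Proposition~\ref{Prop.4.5} as stated and would need a separate justification.
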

\begin{proof} \cite{KS}, Cor. 4.6.
\end{proof}

The following result is also fundamental for the proof of our main theorem.
\begin{prop}\label{Prop.4.8} Let $ n \ge 4$ . If $\frac{2(n-1)}{n-3} \le r < \infty $,  $s = \frac{n}{2} -\frac{n}{r}-\half $ and $\theta > \half$ ,
then
$$ \|u\|_{{\mathcal L}^1_t {\mathcal L}^{r}_x}
\lesssim   \|\Lambda^s \Lambda_-^{\theta} u\|_{{\mathcal L}^{1}_t {\mathcal L}^2_x} \, .$$
\end{prop}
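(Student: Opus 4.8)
\emph{Proof plan.} The idea is to pass from the ${\mathcal L}^q_t{\mathcal L}^r_x$--scale to the ordinary $L^q_tL^r_x$--scale, where the statement becomes a dispersive estimate for the wave equation, and then to transfer back. Writing $u=\Lambda^{-s}\Lambda_-^{-\theta}v$, the claimed inequality reads $\|\Lambda^{-s}\Lambda_-^{-\theta}v\|_{{\mathcal L}^1_t{\mathcal L}^r_x}\lesssim\|v\|_{{\mathcal L}^1_t{\mathcal L}^2_x}$, and by Prop.\ \ref{Cor.4.6} (with $\alpha=-s$, $\beta=-\theta$) it suffices to prove
$$\|\Lambda^{-s}\Lambda_-^{-\theta}v\|_{L^1_tL^r_x}\lesssim\|v\|_{L^1_tL^2_x}$$
for all $v$ with $\widehat v\ge 0$; equivalently, by Prop.\ \ref{Prop.4.5}(b), one may instead prove the dual estimate $\|G\|_{L^\infty_tL^2_x}\lesssim\|\Lambda^s\Lambda_-^\theta G\|_{L^\infty_tL^{r'}_x}$ for $\widehat G\ge0$, $\frac1r+\frac1{r'}=1$. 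This reduction is the reason for introducing the ${\mathcal L}^q_t{\mathcal L}^r_x$--spaces at all: their norm depends only on $|\widehat{\,\cdot\,}|$, so one is free to work throughout with nonnegative Fourier transforms.

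Next I would decompose $v=\sum_{\lambda\ge1}\sum_{d\ge1}v_{\lambda,d}$ dyadically, $v_{\lambda,d}$ Fourier--supported in $\{|\xi|\sim\lambda,\ \langle|\tau|-|\xi|\rangle\sim d\}$ (the $\lambda\sim1$ part being trivial). The part with $d\gtrsim\lambda$ is easy: there $\Lambda_-^{-\theta}\lesssim\Lambda^{-\theta}$ is genuine spatial smoothing, so $\Lambda^{-s}\Lambda_-^{-\theta}v_{\lambda,d}\preceq\lambda^{-s-\theta}v_{\lambda,d}$ with $s+\theta>s+\frac12=\frac n2-\frac nr$, which is exactly the exponent needed for $W^{s+\theta,2}\hookrightarrow L^r$ in $x$, so plain Bernstein applied at each fixed time and Minkowski's inequality in $t$ suffice. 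For the main range $1\le d\lesssim\lambda$, one uses that $v_{\lambda,d}$, being localized to a $d$--neighbourhood of the light cone at frequency $\lambda$, is a superposition over $|\sigma|\lesssim d$ of half-wave solutions $e^{\pm itD}$ with frequency--$\lambda$ data; the dispersive decay $\|e^{\pm itD}P_\lambda\|_{L^{r'}\to L^{r}}\lesssim\lambda^{n(1-\frac2r)}|t|^{-(n-1)(\frac12-\frac1r)}$ (equivalently the endpoint--type Strichartz bound underlying Prop.\ \ref{Str}), integrated in $t$, produces for each dyadic block a bound that gains one half spatial derivative over the trivial Bernstein estimate — this is precisely why the exponent in the hypothesis is $s=\frac n2-\frac nr-\frac12$ rather than the Sobolev value $\frac n2-\frac nr$ — and the constraint $\frac{2(n-1)}{n-3}\le r<\infty$ is exactly what makes the corresponding time integral converge away from the cone. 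The $d$--loss in the per--block estimate is a power of $d$ strictly below $\theta$, so summing over the dyadic modulation levels converges since $\theta>\frac12$; the sum over $\lambda$ converges since $s>0$.

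The hard part will be the summation in the last step. In the $L^1_t$ (rather than $L^2_t$) setting one no longer has the $\ell^2$--orthogonality of the Littlewood--Paley and modulation pieces that drives the standard Strichartz machinery, so one has to organize the estimate so that the dispersive gain above and the nonnegativity of the Fourier transforms compensate for this; concretely, the delicate point is to run the dyadic decomposition and a Littlewood--Paley square function in $L^r_x$ ($r\ge2$) while keeping the modulation weight $\langle|\tau|-|\xi|\rangle^{-\theta}$ attached to each piece, in such a way that the final summation closes under the single condition $\theta>\frac12$. This is where the analysis parallels — and has to be adapted from — the treatment of the analogous spaces in \cite{KS}.
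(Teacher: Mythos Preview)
Your reduction step is fine, and in particular the dual formulation
\[
\|\Lambda^{-s}\Lambda_-^{-\theta}G\|_{L^\infty_tL^2_x}\lesssim\|G\|_{L^\infty_tL^{r'}_x}\qquad(\widehat G\ge0)
\]
is exactly the right place to work. But the subsequent dyadic programme you sketch is not the argument of \cite{KS} (which the paper simply cites), and your own worry about the $L^1_t$ summation is well placed: free half-waves do not lie in $L^1_tL^r_x$ at all, so the per-block bound you describe on the direct side $\|\Lambda^{-s}\Lambda_-^{-\theta}v_{\lambda,d}\|_{L^1_tL^r_x}\lesssim\cdots\|v_{\lambda,d}\|_{L^1_tL^2_x}$ is not available from dispersive decay ``integrated in $t$'', and without $\ell^2$ orthogonality in $L^1_t$ the summation over $\lambda,d$ is genuinely problematic. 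You never actually use the hypothesis $\widehat v\ge0$ in the dyadic part, yet it is essential here.

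The \cite{KS} proof --- whose $n=3$ adaptation is written out in full in the paper as Proposition~\ref{Prop.4.7} --- avoids all of this. On the dual side one exploits nonnegativity via the elementary bound
\[
\|U\|_{L^\infty_tL^2_x}\le\Bigl\|\int\widehat U(\tau,\xi)\,d\tau\Bigr\|_{L^2_\xi}\qquad(\widehat U\ge0),
\]
which turns the estimate into a bilinear pairing $\int\!\!\int\langle\xi\rangle^{-s}\langle|\tau|-|\xi|\rangle^{-\theta}\widehat G(\tau,\xi)\check f(\xi)\,d\tau\,d\xi$ against a test function $f\in L^2_x$. Since $\langle\tau\mp|\xi|\rangle^{-\theta}$ is the temporal Fourier transform of some $c\in L^2(\mathbb R)$ (here $\theta>\tfrac12$ enters), the dual object factors as $c(t)\,\Lambda^{-s}e^{\pm itD}f$, and a single application of the endpoint Strichartz estimate in Proposition~\ref{Str} with $q=2$ (which is precisely where the range $r\ge\frac{2(n-1)}{n-3}$ and the value $s=\frac n2-\frac nr-\frac12$ come from) finishes the proof. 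No Littlewood--Paley, no modulation decomposition, no summation issues. I would recommend you rewrite the argument along these lines, following Proposition~\ref{Prop.4.7} as a template.
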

\begin{proof} \cite{KS}, Lemma 4.8.
\end{proof}

We also need an elementary estimate which is used as a tool for replacing $H^{s,-\half+}$-norms by $H^{s,-\half-}$-norms.
\begin{lemma}\label{Lemma8.10} Let $\alpha,\beta
\ge 0$. Then
\begin{align*}
\Lambda_{-}^{-\beta}(uv) &\precsim \Lambda_-^{-\alpha-\beta} (\Lambda_+^{\alpha} u \Lambda_+^{\alpha}v) \, , \\
\Lambda_{-}^{-\beta}(uv) &\precsim
\Lambda_{-}^{-\alpha-\beta}(u \Lambda^{\alpha} v) + u \Lambda^{-\beta} v
\end{align*}
for all $u$ and $v$ with $\widehat u, \widehat v \ge 0$. \end{lemma}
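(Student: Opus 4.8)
The plan is to reduce both claims to pointwise inequalities between Fourier multipliers, since the relation $\precsim$ only sees the modulus of the Fourier transform, and both sides involve only Fourier multipliers applied to products. Assuming $\widehat u,\widehat v\ge 0$, the Fourier transform of $uv$ at $(\tau_0,\xi_0)$ is the convolution $\int \widehat u(\tau_1,\xi_1)\widehat v(\tau_2,\xi_2)$ taken over $\tau_1+\tau_2=\tau_0$, $\xi_1+\xi_2=\xi_0$. Hence it suffices to establish, for all $(\tau_j,\xi_j)$ with $\tau_0=\tau_1+\tau_2$ and $\xi_0=\xi_1+\xi_2$, the two scalar inequalities
\begin{align*}
\langle |\tau_0|-|\xi_0|\rangle^{-\beta} &\lesssim \langle|\tau_0|-|\xi_0|\rangle^{-\alpha-\beta}\,\langle|\tau_1|+|\xi_1|\rangle^{\alpha}\,\langle|\tau_2|+|\xi_2|\rangle^{\alpha}\,, \\
\langle |\tau_0|-|\xi_0|\rangle^{-\beta} &\lesssim \langle|\tau_0|-|\xi_0|\rangle^{-\alpha-\beta}\,\langle\xi_2\rangle^{\alpha}+\langle\xi_2\rangle^{-\beta}\,,
\end{align*}
after which the lemma follows by integrating against $\widehat u\,\widehat v$ over the convolution hyperplane.

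For the first inequality, cancelling $\langle|\tau_0|-|\xi_0|\rangle^{-\alpha-\beta}$ reduces it to $\langle|\tau_0|-|\xi_0|\rangle^{\alpha}\lesssim \langle|\tau_1|+|\xi_1|\rangle^{\alpha}\langle|\tau_2|+|\xi_2|\rangle^{\alpha}$, and since $\alpha\ge 0$ it is enough to prove this with $\alpha=1$. This is the triangle inequality: $\bigl||\tau_0|-|\xi_0|\bigr|\le |\tau_0|+|\xi_0|\le (|\tau_1|+|\xi_1|)+(|\tau_2|+|\xi_2|)$, and using $\langle a\rangle\langle b\rangle\gtrsim 1+a+b$ for $a,b\ge 0$ the right-hand side dominates $\langle|\tau_0|-|\xi_0|\rangle$.

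For the second inequality, dividing through by $\langle|\tau_0|-|\xi_0|\rangle^{-\beta}$ and setting $t:=\langle\xi_2\rangle/\langle|\tau_0|-|\xi_0|\rangle>0$, the claim becomes $1\lesssim t^{\alpha}+t^{-\beta}$, which holds with constant $1$: if $t\ge 1$ then $t^{\alpha}\ge 1$ (as $\alpha\ge 0$), and if $t\le 1$ then $t^{-\beta}\ge 1$ (as $\beta\ge 0$). Equivalently one splits into the regions $\langle\xi_2\rangle\ge\langle|\tau_0|-|\xi_0|\rangle$ and $\langle\xi_2\rangle\le\langle|\tau_0|-|\xi_0|\rangle$ and invokes the monotonicity of $x\mapsto x^{\alpha}$ resp. $x\mapsto x^{-\beta}$ to keep the first resp. second term on the right.

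There is no serious obstacle here; the only point requiring a little care is the bookkeeping with the Japanese brackets, namely the elementary equivalence $\langle a\rangle\sim 1+|a|$ and the bound $\langle a+b\rangle\lesssim\langle a\rangle\langle b\rangle$, which is what makes the triangle-inequality step of the first estimate survive raising to the power $\alpha$, and the observation that the second estimate is a genuine dichotomy rather than a single inequality (hence the two terms on its right-hand side).
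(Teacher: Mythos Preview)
Your proof is correct. The paper does not give its own argument but simply cites \cite{KS}, Lemma 8.10; your reduction to pointwise symbol inequalities together with the triangle-inequality bound $\langle|\tau_0|-|\xi_0|\rangle\lesssim\langle|\tau_1|+|\xi_1|\rangle\langle|\tau_2|+|\xi_2|\rangle$ for the first estimate and the dichotomy $1\le t^{\alpha}+t^{-\beta}$ for the second is exactly the standard argument one finds there.
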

\begin{proof}
\cite{KS}, Lemma 8.10.
\end{proof}

Finally, we formulate the fundamental theorem which allows to reduce the local well-posedness for a system of nonlinear wave equations to suitable estimates for the nonlinearities. It is also essentially contained in the paper by \cite{KS}. 

\begin{prop}
\label{Prop1.6} Let $n \ge 3$ .
Let $u_0 \in H^s$ , $u_1 \in H^{s-1}$ , $v_0 \in H^r$ , $v_1 \in H^{r-1}$ be given. Assume that
\begin{align*}
\| \Lambda_+^{-1} \Lambda_-^{\epsilon-1} {\mathcal M}(u,\partial u,v , \partial v)\|_{F^s} & \le \omega_1(\|u\|_{F^s},\|v\|_{G^r}) \, , \\
\| \Lambda_+^{-1} \Lambda_-^{\epsilon-1} {\mathcal N}(u,\partial u,v , \partial v)\|_{G^r} & \le \omega_2(\|u\|_{F^s},\|v\|_{G^r}) \, , 
\end{align*}
and
\begin{align*}
&\| \Lambda_+^{-1} \Lambda_-^{\epsilon-1} ({\mathcal M}(u,\partial u,v , \partial v)\|_{F^s} - {\mathcal M}(u',\partial u',\partial v'))\|_{F^s} \\
&+
\| \Lambda_+^{-1} \Lambda_-^{\epsilon-1} ({\mathcal N}(u,\partial u,v , \partial v)\|_{G^r} - {\mathcal N}(u',\partial u',v' , \partial v')\|_{G^r} \\ &\le \omega(\|u\|_{F^s},\|u'\|_{F^s},\|v\|_{G^r}, \|v'\|_{G^r}) (\|u-u'\|_{F^s} + \|v-v'\|_{G^r}) \, , \\
\end{align*}
where $\omega,\omega_1,\omega_2$ are continuous functions with $\omega(0,0,0,0) = \omega_1(0,0) = \omega_2(0,0) = 0$.
Then the Cauchy problem
$$ \Box \, u = {\mathcal M}(u,\partial u,v,\partial v) \quad , \quad \Box \, v = {\mathcal N}(u,\partial u,v,\partial v) $$
with data
$$u(0) = u_0 \, , \, (\partial_t u)(0) = u_1 \, , \, v(0)= v_0 \, , \,(\partial_t v)(0) = v_1 $$
is locally well-posed, i.e. , there exists $T>0$ , such that there exists a unique solution $u \in F^s_T$ , $v \in G^r_T$ .
\end{prop}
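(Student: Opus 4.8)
The plan is to set up the standard contraction-mapping (Picard iteration) argument in the solution spaces $F^s_T$ and $G^r_T$, following the blueprint of Klainerman–Selberg \cite{KS}. The iteration map is defined by solving the linear inhomogeneous wave equations: given $(u,v)$, let $(\widetilde u,\widetilde v)$ solve $\Box\,\widetilde u = {\mathcal M}(u,\partial u,v,\partial v)$, $\Box\,\widetilde v = {\mathcal N}(u,\partial u,v,\partial v)$ with the prescribed Cauchy data $(u_0,u_1)$, $(v_0,v_1)$. The key is to show that this map is a contraction on a closed ball in $F^s_T\times G^r_T$ for $T$ small enough. The structure of the norms $\|\cdot\|_{F^s}$, $\|\cdot\|_{G^r}$ is tailored precisely so that two ingredients combine: (i) a linear estimate showing that the solution operator for $\Box$ with data in $H^s\times H^{s-1}$, plus Duhamel term with source in $\Lambda_+^{-1}\Lambda_-^{\epsilon-1}(\cdot)$ measured in $F^s$, lands back in $F^s_T$ with a norm controlled by the data plus the source, and likewise for $G^r$; and (ii) the hypotheses of the proposition, which are exactly the nonlinear estimates needed to close the loop.

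\textbf{Key steps, in order.} First I would record the \emph{homogeneous linear estimate}: the free solution $u_{\mathrm{hom}}$ of $\Box\,u_{\mathrm{hom}}=0$ with data $(u_0,u_1)\in H^s\times H^{s-1}$ satisfies $\|u_{\mathrm{hom}}\|_{F^s_T}\lesssim \|u_0\|_{H^s}+\|u_1\|_{H^{s-1}}$, and similarly $\|v_{\mathrm{hom}}\|_{G^r_T}\lesssim \|v_0\|_{H^r}+\|v_1\|_{H^{r-1}}$. For the $H^{s-1,\half+\epsilon}$ component this is the usual $X^{s,b}$ energy estimate; for the ${\mathcal L}^1_t{\mathcal L}^{4n}_x$ (resp. ${\mathcal L}^{14/13}_t{\mathcal L}^{14}_x$) component one invokes Prop.~\ref{Prop.4.8} (resp. Prop.~\ref{Prop.4.8'}) to reduce to an ${\mathcal L}^1_t{\mathcal L}^2_x$ estimate on $\Lambda^{s'}\Lambda_-^\theta$ of the free wave, which holds after checking the arithmetic of the exponents $-\tfrac34+3\epsilon$ resp. $-\tfrac27-\half+3\epsilon$ against $s=\tfrac n2-\tfrac nr-\half$. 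Second, the \emph{inhomogeneous (Duhamel) estimate}: the operator $\Box^{-1}$ with zero data maps a forcing term $G$ into $F^s_T$ (resp. $G^r_T$) with $\|\Box^{-1}G\|_{F^s_T}\lesssim \|\Lambda_+^{-1}\Lambda_-^{\epsilon-1}G\|_{F^s_T}$; this is the content of \cite{KS}, combined with the observation that the $T$-restricted norms gain a small positive power of $T$, so the map is a strict contraction once the nonlinear bounds are in hand. Third, I would simply \emph{quote the hypotheses}: the assumed bounds $\|\Lambda_+^{-1}\Lambda_-^{\epsilon-1}{\mathcal M}\|_{F^s}\le\omega_1(\|u\|_{F^s},\|v\|_{G^r})$, the analogous one for ${\mathcal N}$, and the Lipschitz-type difference estimate, feed directly into the fixed-point scheme. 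Finally, a standard argument gives uniqueness in $F^s_T\times G^r_T$ and persistence, so that the solution is well-defined and the construction is complete. Here ${\mathcal M},{\mathcal N}$ are taken to be the nonlinearities appearing on the right sides of \eqref{YMF2} and \eqref{YM4}, and the verification that they satisfy the hypotheses is deferred to the later chapters — Proposition~\ref{Prop1.6} itself is purely the abstract reduction.

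\textbf{Main obstacle.} The crux is establishing the linear mapping properties of $F^s$ and $G^r$ — i.e. that the extra ${\mathcal L}^q_t{\mathcal L}^r_x$-type component is genuinely \emph{compatible} with the wave flow and with Duhamel, and that it yields the claimed continuity in time with values in $H^s$ (resp. $H^r$). This is essentially \cite{KS}, Prop.~4.2 and the surrounding machinery, but it must be re-checked for the modified weights: one has to verify that $\Lambda_+^{\half+2\epsilon}\Lambda^{-3/4+3\epsilon}\Lambda_-^{\half}$ applied to a free wave is controlled in ${\mathcal L}^1_t{\mathcal L}^{4n}_x$, which via Prop.~\ref{Prop.4.8} reduces to an index computation: with $r=4n$ one needs $\tfrac n2-\tfrac{1}{4}-\half = s'$ to match $-\tfrac34+3\epsilon$ after accounting for the $\Lambda_+^{\half+2\epsilon}$ and $\Lambda_-^{\half}$ factors (the $\Lambda_+$ and $\Lambda_-$ on the free wave cancel down to a single power of $\langle\xi\rangle$). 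Beyond the bookkeeping, the subtle point — already flagged in the introduction — is that $H^{s,b}$-type spaces alone would fail here; the ${\mathcal L}^q_t{\mathcal L}^r_x$ ingredient is indispensable, and one must confirm that the duality and Sobolev-embedding lemmas (Props.~\ref{Prop.4.3}, \ref{Prop.4.5}, \ref{Cor.4.6}) survive the modification so that the abstract Picard scheme of \cite{KS} applies verbatim. Everything else — the smallness in $T$, the contraction estimate, uniqueness — is routine once these structural facts are in place.
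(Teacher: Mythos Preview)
Your proposal is correct and takes essentially the same approach as the paper: contraction mapping in $F^s_T\times G^r_T$, reducing everything to the linear mapping properties of the solution spaces and invoking the machinery of \cite{KS}. The paper's own proof is in fact even terser than yours --- it simply cites \cite{KS}, Theorems 5.4 and 5.5, Propositions 5.6 and 5.7 (where the single-equation case with solution space $\|u\|_{{\mathcal X}^s} = \|\Lambda_+ u\|_{H^{s-1,\half+\epsilon}} + \|\Lambda^{\gamma}\Lambda_-^{\half} u\|_{{\mathcal L}^1_t{\mathcal L}^{2n}_x}$ is treated), declares the system case a straightforward modification, and records only one explicit computation: for $n=3$, the second component of the $F^s$-norm of the free/Duhamel part is controlled via Corollary~\ref{Cor.3.2} by
\[
\|\Lambda_+^{\half+2\epsilon}\Lambda^{-\frac{2}{7}-\half+3\epsilon}\Lambda_-^{\half} u\|_{{\mathcal L}^{14/13}_t{\mathcal L}^{14}_x} \lesssim \|\Lambda_+\Lambda^{s-1}\Lambda_-^{1+} u\|_{{\mathcal L}^1_t{\mathcal L}^2_x},
\]
using $s>\tfrac{5}{7}$. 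Your identification of the ``main obstacle'' is exactly this point.
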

\begin{proof}
This is proved by the contraction mapping principle provided the solution space fulfills suitable assumptions. The case of a single equation $\Box \, u = {\mathcal M}(u,\partial u)$ and the solution space $F^s$ was proven by \cite{KS}, Theorems 5.4 and 5.5, Propositions 5.6 and 5.7. Our case is a straightforward modification of their results, thus we omit the proof.
\end{proof}

\section{Reformulation of the problem and null structure}
The reformulation of the Yang-Mills equations and the reduction of our main theorem to nonlinear estimates is completely taken over from Tesfahun \cite{Te} (cf. also the fundamental paper by Selberg and Tesfahun \cite{ST}).

The standard null forms are given by
\begin{equation}\label{OrdNullforms}
\left\{
\begin{aligned}
Q_{0}(u,v)&=\partial_\alpha u \partial^\alpha v=-\partial_t u \partial_t v+\partial_i u \partial^j v,
\\
Q_{\alpha\beta}(u,v)&=\partial_\alpha u \partial_\beta v-\partial_\beta u \partial_\alpha v.
\end{aligned}
        \right.
\end{equation}
For $\mathfrak g$-valued $u,v$, define a commutator version of null forms by 
\begin{equation}\label{CommutatorNullforms}
\left\{
\begin{aligned}
  Q_0[u,v] &= [\partial_\alpha u, \partial^\alpha v] = Q_0(u,v) - Q_0(v,u),
  \\
  Q_{\alpha\beta}[u,v] &= [\partial_\alpha u, \partial_\beta v] - [\partial_\beta u, \partial_\alpha v] = Q_{\alpha\beta}(u,v) + Q_{\alpha\beta}(v,u).
\end{aligned}
\right.
\end{equation}

 Note the identity
\begin{equation}\label{NullformTrick}
  [\partial_\alpha u, \partial_\beta u]
  = \frac12 \left( [\partial_\alpha u, \partial_\beta u] - [\partial_\beta u, \partial_\alpha u] \right)
  = \frac12 Q_{\alpha\beta}[u,u].
\end{equation}

Define 
\begin{equation}\label{NewNull} 
  \mathcal{Q}[u,v] = - \frac12 Q_{jk}\left[\Lambda^{-1}(R^j u^k - R^k u^j),v\right]
  - Q_{0j}\left[R^j u_0, v \right],
\end{equation}
where 
$R_i = \Lambda^{-1}\partial_i $ is the Riesz transform.

We follow Tesfahun \cite{Te} in the following generalizing his 3-dimensional results to arbitrary dimension $n \ge 3$.

We split the spatial part $\mathbf A=(A_1,..., A_n)$ of the potential into divergence-free and curl-free parts and a smoother part:
\begin{equation}\label{SplitA}  
\mathbf A =  A^{df} + A^{cf} + \angles{\nabla}^{-2} \mathbf A,
\end{equation}
where
\begin{align*}
  (A^{df})^j &= R^k(R_j A_k - R_k A_j) \, ,
  \\
  (A^{cf})^j &= -R_j R_k A^k \, .
\end{align*}

\begin{lemma}
\label{Lemma1Te}
(cf. \cite{Te},Lemma 1)
In the Lorenz gauge we have the identities

\begin{align}
\label{2.5}
[A^{\alpha},\partial_{\alpha} \phi] &= \mathcal{Q} [\Lambda^{-1} \mathbf A, \phi] + [\Lambda^{-2} A^{\alpha},\partial_{\alpha} \phi ] \, , \\
\label{2.6}
[\partial_t A^{\alpha}, \partial_{\alpha} \phi] &= Q_{0i}[A^i,\phi] \, .
\end{align}
\end{lemma}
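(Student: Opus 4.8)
\textbf{Proof plan for Lemma \ref{Lemma1Te}.}
The plan is to verify the two algebraic identities \eqref{2.5} and \eqref{2.6} by a direct computation, using the Lorenz gauge condition $\partial^\alpha A_\alpha = 0$ and the decomposition \eqref{SplitA} of the spatial potential into divergence-free, curl-free and smoother parts. The point of both identities is that the raw bilinear expressions $[A^\alpha,\partial_\alpha\phi]$ and $[\partial_t A^\alpha,\partial_\alpha\phi]$, which have no obvious null structure as written, can be rewritten (up to terms that are two derivatives smoother in $\mathbf A$) in terms of the standard null forms $Q_{jk}$ and $Q_{0j}$ and the modified null form $\mathcal Q$ of \eqref{NewNull}.

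First I would prove \eqref{2.6}, which is the simpler one. Splitting the sum over $\alpha$ into the temporal and spatial parts, $[\partial_t A^\alpha,\partial_\alpha\phi] = [\partial_t A^0,\partial_t\phi] + [\partial_t A^i,\partial_i\phi]$; note $[\partial_t A^0,\partial_t\phi] = -[\partial_t A_0,\partial_t\phi]$ because of the metric signature. Now differentiate the Lorenz gauge condition $\partial_t A_0 = \partial^i A_i$ in time (here I use \eqref{Const}, or equivalently $\partial^\alpha A_\alpha=0$ directly) to replace $\partial_t A^0$, so that $[\partial_t A^0,\partial_t\phi] = [\partial^i A_i,\partial_t\phi] = \partial_t[\partial^i A_i,\cdot]$-type terms, and then recombine: $[\partial_t A^i,\partial_i\phi] + [\partial_t A^0,\partial_t\phi]$ should collapse to $[\partial_t A^i,\partial_i\phi] - [\partial_i A^i,\partial_t\phi] = [\partial_t A^i,\partial_i\phi] - [\partial_i A^i,\partial_t\phi]$, which is exactly $Q_{0i}[A^i,\phi]$ by the definition \eqref{CommutatorNullforms}. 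I would double-check the index bookkeeping here since the sign conventions from $diag(-1,1,\dots,1)$ are easy to slip on.

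For \eqref{2.5}, the strategy is: write $[A^\alpha,\partial_\alpha\phi] = -[A_0,\partial_t\phi] + [A^i,\partial_i\phi]$, insert the decomposition $\mathbf A = A^{df} + A^{cf} + \angles{\nabla}^{-2}\mathbf A$ into the spatial term, and observe that the Lorenz gauge forces the curl-free part $A^{cf}$ to combine with the $A_0$ term. Indeed $\partial^i A_i^{cf} = \partial^i A_i$ (since $A^{df}$ and $\angles{\nabla}^{-2}\mathbf A$ contribute genuinely lower-order pieces to the divergence, up to the smoothing factor), and $\partial^i A_i = \partial_t A_0$ by the gauge condition, which is what ties the $j,0$-components together into the $Q_{0j}[R^j A_0,\phi]$ piece of $\mathcal Q$. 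Meanwhile the divergence-free part $(A^{df})^j = R^k(R_j A_k - R_k A_j)$, when paired with $\partial_j\phi$, is designed precisely so that $[(A^{df})^j, \partial_j\phi]$ reproduces the $-\tfrac12 Q_{jk}[\Lambda^{-1}(R^j \mathbf A^k - R^k \mathbf A^j),\phi]$ term in \eqref{NewNull}; one writes $R^k R_j A_k\,\partial_j\phi$ and antisymmetrizes the $j,k$ indices, using that $R^k$ commutes with derivatives, to extract the $Q_{jk}$ structure. Everything that does not fall into these two null-form buckets carries an extra $\angles{\nabla}^{-2}$ and is collected into the error term $[\Lambda^{-2}A^\alpha,\partial_\alpha\phi]$.

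The main obstacle I anticipate is not conceptual but bookkeeping: one must carefully track which pieces of the decomposition \eqref{SplitA} contribute to the leading null-form terms and which are genuinely absorbed into the $\Lambda^{-2}$-smoothed remainder, and verify that the Riesz-transform symbols $R_i = \Lambda^{-1}\partial_i$ rearrange (commuting freely, since they are Fourier multipliers) to match exactly the combination $\mathcal Q[\Lambda^{-1}\mathbf A,\phi]$ defined in \eqref{NewNull}, including the factor $-\tfrac12$ and the placement of the $\Lambda^{-1}$. Since this is asserted to be essentially Lemma 1 of \cite{Te}, I would follow that computation, merely checking that nothing in it used $n=3$ — it does not, the identities being purely algebraic in the Fourier multipliers and the Lorenz gauge relation — so the extension to arbitrary $n\ge 3$ is immediate.
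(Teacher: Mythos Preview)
Your plan is correct and follows essentially the same computation as the paper: decompose the spatial part via \eqref{SplitA}, use the Lorenz gauge $\partial_t A_0 = \partial^i A_i$ to pair $A_0$ with $A^{cf}$ into the $Q_{0j}$ piece (the paper writes $A_0 = -\Lambda^{-2}\partial_j\partial^j A_0 + \Lambda^{-2}A_0$ to extract the $\Lambda^{-1}R^j A_0$ factor plus the $\Lambda^{-2}$ remainder), antisymmetrize $A^{df}\cdot\nabla\phi$ into the $Q_{jk}$ piece, and for \eqref{2.6} simply substitute $\partial_t A_0 = \partial^i A_i$ directly---no time differentiation of the gauge condition is needed, that phrase in your plan is a slip.
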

\begin{proof}
Writing
$$ A^{\alpha} \partial_{\alpha} \phi = (-A_0 \partial_t \phi + A^{cf} \cdot \nabla \phi) + A^{df} \cdot \nabla \phi + \Lambda^{-2} \mathbf{A} \cdot \nabla \phi $$
one easily checks using the Lorenz gauge $\partial_t A_0 = \Lambda R_k A^k$ :
\begin{align*}
A^{cf} \cdot \nabla \phi &= - R_j R_k A^k \partial^j \phi = -\partial_t \Lambda^{-1} R_j A_0 \partial^j \phi \\
A_0 \partial_t \phi & = -\Lambda^{-2} \partial_j \partial^j A_0 \partial_t \phi + \Lambda^{-2} A_0 \partial_t \phi \\
& = - \partial_j(\Lambda^{-1} R^j A_0) \partial_t \phi + \Lambda^{-2} A_0 \partial_t \phi \, , 
\end{align*}
so that
$$ -A_0 \partial_t \phi + A^{cf} \cdot \nabla \phi = - Q_{0j}(\Lambda^{-1} R^j A_0,\phi) - \Lambda^{-2} A_0 \partial_t \phi \, . $$
Next
\begin{align*}
A^{df} \cdot \nabla \phi &= R^k(R_j A_k - R_k A_j)\partial^j \phi \\
&= \Lambda^{-2} \partial^k \partial_j A_k \partial^j \phi + A_j \partial^j \phi \\
&= -\half \left( \Lambda^{-2}(\partial_j \partial^j A_k - \partial_j \partial_k A^j)\partial^k \phi - \Lambda^{-2}(\partial^k \partial_j A_k - \partial_k \partial^k A_j) \partial^j \phi \right) \\
&= -\half \left(\partial_j \Lambda^{-1}(R^j  A_k - R_k A^j)\partial^k \phi - \partial^k \Lambda^{-1}(R_j A_k - R_k  A_j) \partial^j \phi \right) \\
&= - \half Q_{jk}(\Lambda^{-1}(R^j A^k - R^k A^j),\phi) \, .
\end{align*}
This leads to (\ref{2.5}). For (\ref{2.6}) we use the Lorenz gauge to obtain
\begin{align*}
[\partial_t A^{\alpha}, \partial_{\alpha} \phi] = [-\partial_t A_0,\partial_t \phi] + [\partial_t
 A^i, \partial_i \phi] 
= - [\partial_i A^i, \partial_i \phi] + [\partial_t A^i, \partial_i \phi]= Q_{0i}[A^i,\phi] \, .
\end{align*}
\end{proof}

\begin{lemma}
\label{Lemma2Te}
(cf. \cite{Te},Lemma 2)
In the Lorenz gauge the following identity holds:
$$ [A^{\alpha} , \partial_{\beta} A_{\alpha}] = \sum_{i=1}^4 \Gamma_{\beta}^i(A,\partial A,F,\partial F) \, , $$
where 
\begin{align*}
\Gamma_{\beta}^1 &= -[A_0,\partial_{\beta} A_0] + [\Lambda^{-1}R_j(\partial_t A_0), \Lambda^{-1} R^j \partial_t (\partial_{\beta} A_0)] \, , \\
\Gamma_{\beta}^2 &= \sum_{i,j} Q_{ij}[\Lambda^{-1}R_k A^k, \Lambda^{-1} R_j \partial_{\beta} A_i] + \sum_{i,j} Q_{ij}[\Lambda^{-1} R_k \partial_{\beta}A^k, \Lambda^{-1} R_j A_i] \, , \\
\Gamma_{\beta}^3 &= \sum_j \Big([\Lambda^{-1} R^i F_{ji},\Lambda^{-1} R^k \partial_{\beta} F_{jk}] + [\Lambda^{-1} R^i F_{ji}, \Lambda^{-1} \partial_{\beta} R^k[A_k,A_j]] \\
& \hspace{1em} + [\Lambda^{-1} R^i [A_i,A_j], \Lambda^{-1} \partial_{\beta} R^k F_{jk}] + [\Lambda^{-1}R^i [A_i,A_j], \Lambda^{-1} \partial_{\beta} R^k [A_k,A_j]] \Big) \, , \\
\Gamma_{\beta}^4 &= [\Lambda^{-2} \mathbf{A}, \partial_{\beta} \mathbf{A}] +[\mathbf{A}, \Lambda^{-2} \partial_{\beta} \mathbf{A}] \, .
\end{align*}
\end{lemma}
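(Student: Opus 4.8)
The plan is to establish the identity by a direct computation, following \cite{Te}, Lemma 2; the only ingredients are the Hodge decomposition \eqref{SplitA}, the Lorenz gauge, and the curvature relation \eqref{curv}, all of which are dimension-independent, so that the three-dimensional argument of \cite{Te} carries over verbatim to $n \ge 3$. First I would peel off the temporal component: since the metric is $\mathrm{diag}(-1,1,\dots,1)$ we have $A^0 = -A_0$ and $A^j = A_j$, hence
$$[A^\alpha, \partial_\beta A_\alpha] = -[A_0,\partial_\beta A_0] + [A^j,\partial_\beta A_j],$$
the first term being already the first summand of $\Gamma_\beta^1$. Into the spatial term I substitute $\mathbf A = A^{df} + A^{cf} + \Lambda^{-2}\mathbf A$ from \eqref{SplitA} in both factors $A^j$ and $\partial_\beta A_j$, which yields nine pieces; I then sort them into four groups: $A^{cf}\times A^{cf}$, $A^{df}\times A^{df}$, the mixed $A^{cf}/A^{df}$ terms, and all terms in which at least one factor carries the smoothing operator $\Lambda^{-2}$.

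For the $A^{cf}\times A^{cf}$ piece I would use the Lorenz gauge $\partial^\alpha A_\alpha = 0$, i.e.\ $\partial_j A^j = \partial_t A_0$, which gives $(A^{cf})^j = -R_j R_k A^k = -\Lambda^{-1}R_j \partial_t A_0$ and, after applying $\partial_\beta$, the analogous formula for $\partial_\beta(A^{cf})_j$; summing over $j$ reproduces exactly the second summand $[\Lambda^{-1}R_j(\partial_t A_0),\Lambda^{-1}R^j\partial_t(\partial_\beta A_0)]$ of $\Gamma_\beta^1$. For the $A^{df}\times A^{df}$ piece I would first rewrite $(A^{df})^j = R^k(R_j A_k - R_k A_j) = \Lambda^{-2}\partial_k(\partial_j A_k - \partial_k A_j)$ and then insert $\partial_j A_k - \partial_k A_j = F_{jk} + [A_k,A_j]$ from \eqref{curv}, obtaining $(A^{df})^j = \Lambda^{-1}R^i F_{ji} + \Lambda^{-1}R^i[A_i,A_j]$. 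The piece equals $\sum_j [(A^{df})^j,\partial_\beta(A^{df})_j]$, and expanding each factor into its $F$-part and its $[A,A]$-part produces precisely the four terms of $\Gamma_\beta^3$. The mixed $A^{cf}/A^{df}$ cross terms, manipulated with the antisymmetry of $R_j A_k - R_k A_j$ and the multiplier identity $\sum_j R_j^2 = \Lambda^{-2}\Delta = \Lambda^{-2} - 1$, collapse to the standard spatial null forms and give $\Gamma_\beta^2$, while the non-null remainders thereby generated — together with the genuinely $\Lambda^{-2}$-smoothed pieces — are collected, after a symmetrization that puts them in the form $[\Lambda^{-2}\mathbf A,\partial_\beta\mathbf A]$ and $[\mathbf A,\Lambda^{-2}\partial_\beta\mathbf A]$, into $\Gamma_\beta^4$.

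I expect the main difficulty to be purely organizational: the decomposition produces many terms, and one has to verify carefully (a) that every piece which is genuinely quadratic in $\partial A$ with no extra smoothing is accounted for by one of the null-form structures in $\Gamma_\beta^2$ or $\Gamma_\beta^3$, and (b) that every other piece carries the two orders of smoothing needed to belong to $\Gamma_\beta^4$. No non-trivial inequality is involved at this stage — only the Lorenz gauge, \eqref{curv}, and elementary Fourier-multiplier algebra — so once the bookkeeping is complete the identity follows; this is exactly the content of \cite{Te}, Lemma 2. Note that $\Gamma_\beta^1$ carries no null structure, and it is precisely this term that later obstructs reaching the scaling-critical regularity.
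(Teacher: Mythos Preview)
Your proposal is correct and follows essentially the same route as the paper: peel off the temporal part, insert the Hodge splitting \eqref{SplitA} in both factors of $[A^j,\partial_\beta A_j]$, and identify the $cf\times cf$, $cf\times df$, $df\times df$ and $\Lambda^{-2}$-containing pieces with $\Gamma^1_\beta,\Gamma^2_\beta,\Gamma^3_\beta,\Gamma^4_\beta$ respectively. One small deviation: for the mixed $A^{cf}/A^{df}$ cross terms the paper does a direct index swap $i\leftrightarrow j$ that lands exactly on the null forms in $\Gamma^2_\beta$ with no leftover, so your planned use of $\sum_j R_j^2=\Lambda^{-2}-1$ (and the resulting ``non-null remainders'' to be shunted into $\Gamma^4_\beta$) is unnecessary there---though it would of course also work.
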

\begin{proof}
We write
$$ [A^{\alpha} , \partial_{\beta} A_{\alpha}] = -[A_0,\partial_{\beta} A_0] + [A^j,\partial_{\beta}A_j] = \sum_{i=1}^4 \Gamma_{\beta}^i(A,\partial A,F,\partial F) \, , $$
where
\begin{align*}
\Gamma_{\beta}^1 &= - [A_0,\partial_{\beta} A_0] + [A^{cf},\partial_{\beta} A^{cf}] \, , \\
\Gamma_{\beta}^2 &= [A^{cf},\partial_{\beta} A^{df}] + [A^{df},\partial_{\beta} A^{cf} ] \, , \\
\Gamma_{\beta}^3 &= [A^{df},\partial_{\beta} A^{df}] \, 
\end{align*}
and $\Gamma_{\beta}^4$ as above.

For $\Gamma_{\beta}^1$ we use $\partial_t A_0 = \Lambda R_k A^k$ and obtain
\begin{align*}
-A_0 \partial_{\beta} A_0 + A^{cf} \partial_{\beta} A^{cf} &= -A_0 \partial_{\beta}A_0 + R_j R_k A^k \partial_{\beta} R^j R_k A^k \\
&= -A_0 (\partial_{\beta} A_0) + \Lambda^{-1} R_j (\partial_t A_0)  \Lambda^{-1} \partial_t(\partial_{\beta} A_0) \, ,
\end{align*}  
which gives the result. Concerning $\Gamma_{\beta}^2$ we obtain
\begin{align*}
A^{cf} \partial_{\beta} A^{df} & = -R^j (R_k A^k) \partial_{\beta} R^i(R_j A_i-R_i A_j) \\
& = -R^j(R_k A^k) R^i(\partial_{\beta} R_j A_i) + R^i(R_k A^k) R^j (\partial_{\beta} R_j A_i) \\
&= \sum_{i,j} Q_{ij}(\Lambda^{-1}R_k A^k,\Lambda^{-1}R_j \partial_{\beta} A_i) \, ,
\end{align*}
which gives the claimed result. For $\Gamma_{\beta}^3$ we use 
$$ F_{ji} := \partial_j A_i - \partial_i A_j + [A_j,A_i] \, , $$
so that
$$ (A^{df})_j = R^i(R_j A_i - R_i A_j) = \Lambda^{-1} R^i F_{ji} + R^i(A_i A_j - A_j A_i) \, . $$
This implies
\begin{align*}
&(A^{df})^j \partial_{\beta} A^{df}_j \\
& = \sum_j \Lambda^{-1} (R^i F_{ji} + R^i(A_iA_j-A_j A_i)) \partial_{\beta} \Lambda^{-1} (R^k F_{jk} + R^k(A_k A_j - A_j A_k)) \\
& = \sum_j \Big(\Lambda^{-1}R^i F_{ji} \Lambda^{-1} R^k \partial_{\beta} F_{jk} + \Lambda^{-1} R^i F_{ji}\Lambda^{-1} \partial_{\beta} R^k ( A_k A_j - A_j A_k) \\
& \hspace{3em}+ \Lambda^{-1} R^i (A_i A_j - A_j A_i) \Lambda^{-1} \partial_{\beta} R^k F_{jk} \\
& \hspace{3em} +  \Lambda^{-1} R^i (A_i A_j - A_j A_i) \Lambda^{-1} \partial_{\beta} R^k (A_k A_j - A_j A_k) \Big) \, .
\end{align*}
Thus we obtain the claimed result.
\end{proof}

Now we refer to Tesfahun \cite{Te}, who showed that the system (\ref{YMF2}),(\ref{YM4}) in Lorenz gauge can be written in the following form by use of Lemma \ref{Lemma1Te}, (\ref{NullformTrick}) and (\ref{NewNull}) for (\ref{YMF2}), and Lemma \ref{Lemma1Te} and Lemma \ref{Lemma2Te} for (\ref{YM4}):

\begin{equation}\label{AF}
\begin{aligned}
  \square A_\beta &=  \mathcal M_\beta(A,\partial_t A,F,\partial_t F),
  \\
  \square F_{\beta\gamma} &=  \mathcal N_{\beta\gamma}(A,\partial_t A,F,\partial_t F),
\end{aligned}
\end{equation}
where

  \begin{align*}
  \mathcal M_\beta(A,\partial_t A,F,\partial_t F) &= -2 \mathcal Q[\Lambda^{-1} A,A_\beta] +
  \sum_{i=1}^4\Gamma^i_\beta(A, \partial A, F, \partial F)-2[\Lambda^{-2}  A^\alpha, \partial_\alpha A_\beta ]
  \\
 &\quad  - [A^\alpha, [A_\alpha, A_\beta]],
   \end{align*}

  \begin{align*}
  \mathcal N_{ij}(A,\partial_t A,F,\partial_t F)
  = &- 2\mathcal Q[\Lambda^{-1} A,F_{ij}]
  + 2\mathcal Q[\Lambda^{-1} \partial_j A, A_i]- 2\mathcal Q[\Lambda^{-1} \partial_i A, A_j] 
  \\
  & + 2Q_0[A_i , A_j]
  + Q_{ij}[A^\alpha,A_\alpha]-2[\Lambda^{-2}  A^\alpha, \partial_\alpha F_{ij} ]
  \\
  &+2[\Lambda^{-2}  \partial_jA^\alpha, \partial_\alpha A_{i} ]-2[\Lambda^{-2}  \partial_i A^\alpha, \partial_\alpha A_{j} ]
  \\
  & - [A^\alpha,[A_\alpha,F_{ij}]] + 2[F_{\alpha i},[A^\alpha,A_j]] - 2[F_{\alpha j},[A^\alpha,A_i]]
  \\
  & - 2[[A^\alpha,A_i],[A_\alpha,A_j]],
  \end{align*} 
  
  \begin{align*}
  \mathcal N_{0i}(A,\partial_t A,F,\partial_t F)
  = &- 2\mathcal Q[\Lambda^{-1} A,F_{0i}]
  + 2\mathcal Q[\Lambda^{-1} \partial_i A, A_0]- 2 Q_{0j}[A^j,A_i] \\
	& + 2Q_0[A_0 , A_i]
  + Q_{0i}[A^\alpha,A_\alpha]-2[\Lambda^{-2}  A^\alpha, \partial_\alpha F_{0i} ]\\
	& +2[\Lambda^{-2}  \partial_i A^\alpha, \partial_\alpha A_{0} ]
  - [A^\alpha,[A_\alpha,F_{0i}]] + 2[F_{\alpha 0},[A^\alpha,A_i]] \\
	&- 2[F_{\alpha i},[A^\alpha,A_0]]
  - 2[[A^\alpha,A_0],[A_\alpha,A_i]]
  \end{align*}
  where $\Gamma_{\beta}^i$ are defined in Lemma \ref{Lemma2Te}.

Now, looking at the terms in $ \mathcal{M}$ and $ \mathcal{N}$ and noting the fact that the Riesz transforms 
$R_i$ are bounded in the spaces involved, the estimates in Proposition \ref{Prop1.6} 
reduce to proving (we remark, that due to the multilinear character of the nonlinearity the estimates for the difference can be treated exactly like the other estimates) .\\
1. the corresponding estimates for the null forms $Q_{ij}$ , $Q_0$ and $Q \in \{Q_{0i},Q_{ij}\}$ :
\begin{align}
  \label{24}
  \norm{\Lambda_+^{-1} \Lambda_-^{\epsilon -1} Q[\Lambda^{-1} A, A]}_{F^s}
  &\lesssim \|A\|_{F^s} \|A\|_{F^s},
  \\
    \label{25}
  \norm{ \Lambda_+^{-1} \Lambda_-^{\epsilon -1} Q_{ij}[\Lambda^{-1} A, \Lambda^{-1} \partial A]}_{F^s}
  &\lesssim  \|A\|_{F^s} \|A\|_{F^s} ,
  \\
 \label{26}
  \norm{\Lambda_+^{-1} \Lambda_-^{\epsilon -1} Q[\Lambda^{-1}A, F]}_{G^r }
  &\lesssim \|A\|_{F^s}  \|F\|_{G^r},
    \\
  \label{27}
  \norm{ \Lambda_+^{-1} \Lambda_-^{\epsilon -1} Q[ A,   A]}_{G^r }
  &\lesssim \|A\|_{F^s} \|A\|_{F^s} ,\\
	\label{28}
  \norm{\Lambda_+^{-1} \Lambda_-^{\epsilon -1}  Q_0[ A,   A]}_{G^r }
  &\lesssim \|A\|_{F^s}  \|A\|_{F^s} ,
	\end{align} 
the following estimate for $\Gamma^1$ and other bilinear terms
\begin{align}
  \label{29}
  \norm{\Lambda_+^{-1} \Lambda_-^{\epsilon -1}\Gamma^1( A, \partial A)}_{F^s}
  &\lesssim\|A\|_{F^s} \|A\|_{F^s} ,
  \\
  \label{30} 
   \norm{\Lambda_+^{-1} \Lambda_-^{\epsilon -1}\Pi( A, \Lambda^{-2} \partial A  ) }_{F^s}
     &\lesssim \|A\|_{F^s} \|A\|_{F^s} ,
     \\
     \label{31} 
   \norm{\Lambda_+^{-1} \Lambda_-^{\epsilon -1} \Pi( \Lambda^{-2} A,  \partial A)   }_{F^s}
     &\lesssim \|A\|_{F^s} \|A\|_{F^s} ,
     \\
  \label{32} 
  \norm{\Lambda_+^{-1} \Lambda_-^{\epsilon -1}\Pi(\Lambda^{-1} F, \Lambda^{-1} \partial F  ) }_{F^s}
     &\lesssim \|F\|_{G^r} \|F\|_{G^r},
     \\
       \label{33} 
   \norm{\Lambda_+^{-1} \Lambda_-^{\epsilon -1} \Pi( \Lambda^{-2} A,  \partial F)   }_{G^r}
     &\lesssim \|A\|_{F^s} \|F\|_{G^r},
     \\
       \label{34} 
   \norm{ \Lambda_+^{-1} \Lambda_-^{\epsilon -1}\Pi( \Lambda^{-1} A,  \partial A)   }_{G^r}
     &\lesssim \|A\|_{F^s} \|A\|_{F^s}
     \end{align}
and\\
2. the following trilinear and quadrilinear estimates:
 \begin{align}
   \label{35}
   \norm{\Lambda_+^{-1} \Lambda_-^{\epsilon -1}\Pi(\Lambda^{-1} F,\Lambda^{-1} \partial( AA) )}_{F^s}
  &\lesssim  \|F\|_{X^{r, \half+\epsilon}} \|A\|_{F^s} \|A\|_{F^s} ,
  \\
   \label{36}
   \norm{\Lambda_+^{-1} \Lambda_-^{\epsilon -1}\Pi(\Lambda^{-1}\partial F, \Lambda^{-1}  ( AA) )}_{F^s}
  &\lesssim  \|F\|_{X^{r, \half+\epsilon}} \|A\|_{F^s} \|A\|_{F^s} ,
  \\
  \label{37}
   \norm{\Lambda_+^{-1} \Lambda_-^{\epsilon -1}\Pi(\Lambda^{-1}(AA), \Lambda^{-1} \partial ( AA)) }_{F^s}
  &\lesssim  \|A\|_{F^s} \|A\|_{F^s} \|A\|_{F^s} \|A\|_{F^s} ,
  \\
   \label{38} 
  \norm{\Lambda_+^{-1} \Lambda_-^{\epsilon -1}\Pi(A,A,A)}_{F^s}
  &\lesssim\|A\|_{F^s} \|A\|_{F^s} \|A\|_{F^s},
  \\
   \label{39}
  \norm{\Lambda_+^{-1} \Lambda_-^{\epsilon -1}\Pi(A, A, F)}_{G^r}
  &\lesssim \|A\|_{F^s} 
  \|A\|_{F^s}\|F\|_{G^r},
  \\
   \label{40}
  \norm{\Lambda_+^{-1} \Lambda_-^{\epsilon -1}\Pi(A,A, A, A)}_{G^r}
  &\lesssim  \|A\|_{F^s} \|A\|_{F^s} \|A\|_{F^s} \|A\|_{F^s},
     \end{align}
where $\Pi(\cdots)$ denotes a multilinear operator in its arguments.

The matrix commutator null forms are linear combinations of the ordinary ones, 
in view of \eqref{CommutatorNullforms}. Since the matrix 
structure plays no role in the estimates under consideration, 
we reduce (\ref{24})--(\ref{28}) to estimates to the ordinary null forms for $\mathbb C$-valued 
functions $u$ and $v$ (as in \eqref{OrdNullforms}).

The null forms above satisfy the following estimates.
\begin{lemma}
\label{Lemma2.2}
The following estimates hold for $0 \le \alpha \le 1 $ and $Q=Q_{0i}$ or $Q=Q_{ij}$:
\begin{align}
\label{41}
Q_0(u,v) &\precsim D_+^{1-\alpha} D_-^{1-\alpha} (D_+^{\alpha} u D_+^{\alpha} v) + (D_+ D_-^{1-\alpha} u)(D_+^{\alpha} v) + (D_+^{\alpha} u)(D_+ D_-^{1-\alpha} v) \\
\nonumber
Q_0(u,v) &\precsim  D_-^{1-\alpha} (D_+ u D_+^{\alpha} v) +   D_-^{1-\alpha} (D_+^{\alpha} u D_+ v)   \\
 \label{41'} &  \hspace{1em}  + (D_+ D_-^{1-\alpha} u)(D_+^{\alpha} v) + (D_+^{\alpha} u)(D_+ D_-^{1-\alpha} v) \\
\label{42}
Q(u,v) &\precsim D_+^\half D_-^\half (D_+^\half u D_+^\half v) + D_+^\half (D_+^\half D_-^\half u D_+^\half v) + D_+^\half (D_+^\half u D_+^\half D_-^\half v) \\
\nonumber
Q(u,v) &\precsim D_+^{\half-2\epsilon} D_-^{\half-2\epsilon} (D_+^{\half+2\epsilon} u D_+^{\half+2\epsilon} v)
+ D_+^{\half -2\epsilon}(D_+^{\half+2\epsilon} D_-^{\half-2\epsilon} u D_+^{\half+2\epsilon} v ) \\
\label{42'}
& \hspace{1em} + D_+^{\half -2\epsilon}(D_+^{\half+2\epsilon} u D_+^{\half+2\epsilon} D_-^{\half-2\epsilon} v) \\
\nonumber
Q(u,v) &\precsim  D_-^{\half-2\epsilon} (D_+ u D_+^{\half+2\epsilon} v) + D_-^{\half-2\epsilon} (D_+^{\half+2\epsilon} u D_+ v)
+ (D_+ D_-^{\half-2\epsilon} u)( D_+^{\half+2\epsilon} v ) \\
\nonumber
& \hspace{1em} + (D_+^{\half+2\epsilon}  u)( D_+ D_-^{\half-2\epsilon}v )
+ (D_+^{\half+2\epsilon}D_-^{\half-2\epsilon}  u)( D_+ v ) \\
& \hspace{1em}  +(D_+  u)( D_+^{\half+2\epsilon} D_-^{\half-2\epsilon} v )
\label{42''}
\end{align}
\end{lemma}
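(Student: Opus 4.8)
The plan is to regard (\ref{41})--(\ref{42''}) as what they are: pointwise Fourier-side (symbol) estimates. Since every multiplier that occurs ($D$, $D_\pm$) depends only on $(\abs{\tau},\abs{\xi})$, while $\precsim$ sees only the moduli of the Fourier transforms, the first step is to split $u=u_++u_-$, $v=v_++v_-$ with $\widehat{u_\pm},\widehat{v_\pm}$ supported in $\{\pm\tau>0\}$ and to reduce each inequality to the four resulting sign-definite products; the general case then follows by summation, at the cost of an absolute constant.

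For the $Q_0$-estimates (\ref{41}) and (\ref{41'}) the starting point is the symbol identity: with $(\tau_0,\xi_0)=(\tau_1+\tau_2,\xi_1+\xi_2)$ denoting the output frequency,
$$2\bigl(-\tau_1\tau_2+\xi_1\cdot\xi_2\bigr)=(\tau_1^2-\abs{\xi_1}^2)+(\tau_2^2-\abs{\xi_2}^2)-(\tau_0^2-\abs{\xi_0}^2).$$
Since $\abs{\tau_j^2-\abs{\xi_j}^2}=(\abs{\tau_j}+\abs{\xi_j})\,\abs{\abs{\tau_j}-\abs{\xi_j}}$ is exactly the symbol of $D_+D_-$, this already gives
$$Q_0(u,v)\precsim D_+D_-(uv)+(D_+D_-u)\,v+u\,(D_+D_-v),$$
which is (\ref{41}) and (\ref{41'}) at $\alpha=0$. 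For general $0<\alpha\le1$ I would split the lone factor $D_+^1$ in each term as $D_+^\alpha D_+^{1-\alpha}$ and move the $D_+^{1-\alpha}$ portion onto the remaining two frequencies through the triangle inequalities $\abs{\tau_k}+\abs{\xi_k}\lesssim(\abs{\tau_i}+\abs{\xi_i})+(\abs{\tau_j}+\abs{\xi_j})$ over $\{i,j,k\}=\{0,1,2\}$; for the term $D_+D_-(uv)$ one first trades part of the output modulation for a $D_+$-weight via $\abs{\abs{\tau_0}-\abs{\xi_0}}\le\abs{\tau_0}+\abs{\xi_0}$ before distributing. Estimates (\ref{41}) and (\ref{41'}) differ only in how the surplus $D_+$-weight arising from the output term is apportioned among the output and the two input factors; matching the resulting finitely many terms against the two right-hand sides is routine bookkeeping.

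For (\ref{42}), (\ref{42'}), (\ref{42''}) with $Q\in\{Q_{0i},Q_{ij}\}$ the additional ingredient is the standard pointwise comparison, valid on sign-definite pieces,
$$\bigl|q_{\alpha\beta}(X_1,X_2)\bigr|^2\;\lesssim\;\abs{\xi_1}\abs{\xi_2}\,\bigl|q_0(X_1,X_2)\bigr|\;+\;\bigl(\text{lower-order terms, each carrying a factor }\abs{\abs{\tau_j}-\abs{\xi_j}}\bigr),$$
reflecting the geometric facts $\abs{q_{\alpha\beta}}\sim\abs{\xi_1}\abs{\xi_2}\sin\theta_{12}$ and $\abs{q_0}\sim\abs{\xi_1}\abs{\xi_2}(1\mp\cos\theta_{12})$ on the characteristic set. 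Taking square roots, inserting the $Q_0$-symbol bound into $\abs{q_0}^{1/2}$, writing $(\abs{\xi_1}\abs{\xi_2})^{1/2}\le\abs{\xi_1}^{1/2}\abs{\xi_2}^{1/2}$, and redistributing the surplus $D_+$-weight exactly as in the previous paragraph, one reaches sums of the three displayed types; splitting the $\half$-exponents symmetrically yields (\ref{42}), splitting them as $\half\pm2\epsilon$ yields (\ref{42'}), and leaving the output factor $D_-^{\half-2\epsilon}$ undistributed on the product yields (\ref{42''}).

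\textbf{The main obstacle.} The only step that is not pure weight-shuffling is the uniform symbol comparison $\abs{q_{\alpha\beta}}^2\lesssim\abs{\xi_1}\abs{\xi_2}\abs{q_0}+(\dots)$: I would have to check that in the degenerate configurations — opposite signs of $\tau_1$ and $\tau_2$, or $\abs{\xi_0}\ll\abs{\xi_1}\sim\abs{\xi_2}$, or one of the frequencies tending to $0$ — the naive angular picture is superseded by the trivial bound $\abs{q_{\alpha\beta}}\lesssim\abs{\xi_1}\abs{\xi_2}$ together with a matching lower bound on one of the modulations $\abs{\abs{\tau_j}-\abs{\xi_j}}$, so that every error term genuinely exhibits the advertised $D_-$-gain and no term of higher homogeneity is created. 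Everything else is an elementary distribution of homogeneous weights and I would treat it as routine.
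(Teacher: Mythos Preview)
Your symbol identity for $Q_0$ at $\alpha=0$ is correct and is indeed the starting point. The gap is in passing to general $\alpha$. Writing $M_j=\abs{\tau_j}+\abs{\xi_j}$ and $m_j=\bigl|\abs{\tau_j}-\abs{\xi_j}\bigr|$, the three target weights in (\ref{41}) are $M_0^{1-\alpha}m_0^{1-\alpha}M_1^\alpha M_2^\alpha$, $M_1 m_1^{1-\alpha}M_2^\alpha$, $M_1^\alpha M_2 m_2^{1-\alpha}$. Starting from a source term such as $M_1 m_1$ and redistributing via $m_1\le M_1$ and the triangle inequality $M_k\lesssim M_i+M_j$, you only reach expressions like $M_1^{1+\alpha}m_1^{1-\alpha}$; the triangle inequality yields $M_i^{2\alpha}$ or $M_j^{2\alpha}$ pieces, never the \emph{product} $M_1^\alpha M_2^\alpha$. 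In the regime $M_1\gg M_2$, $m_1\sim M_1$, this overshoots all three targets. The same obstruction blocks your passage from (\ref{42}) to (\ref{42'}) by ``splitting $\half$ as $\half\pm 2\epsilon$''.

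The missing idea --- used by the paper explicitly for (\ref{42'}) and implicitly, via the citation to \cite{KS}, Lemma~7.6, for (\ref{41}) --- is to interpolate the sharp $\alpha=0$ bound with the \emph{trivial} bound $\abs{q_0}\le M_1 M_2$ (respectively $\abs{q_{\alpha\beta}}\lesssim M_1 M_2$). Writing
\[
\abs{q_0}=\abs{q_0}^{1-\alpha}\abs{q_0}^{\alpha}\lesssim\bigl(m_0M_0+m_1M_1+m_2M_2\bigr)^{1-\alpha}(M_1M_2)^{\alpha}
\]
and expanding the sum via subadditivity of $x\mapsto x^{1-\alpha}$ gives exactly the three terms of (\ref{41}). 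The same interpolation, applied to the symbol bound of \cite{KMBT}, Prop.~1 (which is (\ref{42})) against the trivial bound, yields (\ref{42'}). Once (\ref{41}) and (\ref{42'}) are established, (\ref{41'}) and (\ref{42''}) are indeed just the fractional Leibniz rule for $D_+$ applied to the first term in each --- exactly as the paper does.

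Your alternative route to (\ref{42}) via $\abs{q_{\alpha\beta}}^2\lesssim\abs{\xi_1}\abs{\xi_2}\abs{q_0}+(\text{modulation errors})$ is in fact the content of \cite{KMBT}, Prop.~1; the obstacle you flag has been resolved there, and the paper simply cites that result rather than reproving it.
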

\begin{proof}
(\ref{41}) is Lemma 7.6 in \cite{KS}, and (\ref{42}) follows immediately from \cite{KMBT}, Prop. 1.
(\ref{42'}) follows by interpolating the estimate for the symbol $q=q(\tau,\xi,\lambda,\eta)$ of \cite{KMBT}, Prop. 1 which led to (\ref{42}) with its trivial bound $ q \lesssim (|\tau|+|\xi|)(|\lambda| + |\eta|) $.  (\ref{41'}) and (\ref{42''}) follow by the fractional Leibniz rule for $\Lambda_+$ and $D_+$ from (\ref{41}) and (\ref{42'}), respectively.
\end{proof}

Next we consider the term $\Gamma_{\beta}^1$ . We may ignore its matrix form and treat 
$$\Gamma^1_k(A_0,, \partial_k A_0)
=-A_0 (\partial_k A_0) + 
\Lambda^{-1} R_j (\partial_t A_0)  \Lambda^{-1} R^j  \partial_t (\partial_k A_0))$$
for $k=1,...,n$ and
\begin{align*}
\Gamma^1_0(A_0,, \partial^i A_i)
&=-A_0 (\partial_0 A_0) + 
\Lambda^{-1} R_j (\partial_t A_0)  \Lambda^{-1} R^j  \partial_t (\partial_0 A_0)) \\
& = -A_0 (\partial^i A_i) + 
\Lambda^{-1} R_j (\partial_t A_0)  \Lambda^{-1} R^j  \partial_t (\partial^i A_i)) \,,
\end{align*}
where we used the Lorenz gauge $\partial_0 A_0 = \partial^i A_i$ in the last line in order to eliminate one time derivative. Thus we have to consider
$$ \Gamma^1(u,v) = -uv + \Lambda^{-1} R_j (\partial_t u) \Lambda^{-1} R^j(\partial_t v) \, , $$
where $u=A_0$ and $v=\partial^i A_i$ or $v=\partial_k A_0$ .

The proof of the following theorem was essentially given by Tesfahun \cite{Te}. In fact the detection of this null structure was the main progress of his paper over Selberg-Tesfahun \cite{ST}.  
\begin{lemma}
\label{Lemma2.1}
The following estimates hold:
\begin{align}
\label{45'}
\Gamma^1(u,v) &\precsim \Gamma^1_1(u,v) + \Gamma^1_2(u,v) + (\Lambda^{-2}u)v + u(\Lambda^{-2}v) \, , \\
\label{45''}
\Gamma^1(u,v) &\precsim uv + \Gamma^1_2(u,v)  \, , 
\end{align}
where
\begin{align}
\nonumber
\Gamma^1_1(u,v)&= D^{\half -2\epsilon} D_-^{\half-2\epsilon} ( D^{-\half +2\epsilon} u \, D^{-\half +2\epsilon} v) + D^{\half-2\epsilon}(D^{-\half +2\epsilon} D_-^{\half -2\epsilon} u D^{-\half+2\epsilon} v ) \\
 \label{43}
& \; \; + D^{\half-2\epsilon}(D^{-\half +2\epsilon}  u \,D^{-\half+2\epsilon} D_-^{\half -2\epsilon} v ) \\ \nonumber
\Gamma^1_2(u,v) &= D_+^{\half -2\epsilon} D_-^{\half-2\epsilon} ( D_+^{\half +2\epsilon} \Lambda^{-1} u D_+^{\half +2\epsilon}\Lambda^{-1} v) + D_+ D_-^{\half-2\epsilon} \Lambda^{-1} u\, D_+^{\half+2\epsilon} \Lambda^{-1}v \\
 \label{44}
& \;\; + D_+^{\half+2\epsilon} \Lambda^{-1}  u \,D_+ D_-^{\half-2\epsilon} \Lambda^{-1} v \\ 
\nonumber
&\lesssim  D_-^{\half-2\epsilon} ( D_+ \Lambda^{-1} u D_+^{\half +2\epsilon}\Lambda^{-1} v) +
 D_-^{\half-2\epsilon} ( D_+^{\half +2\epsilon} \Lambda^{-1} u D_+ \Lambda^{-1} v) \\ 
\label{45}
& \;\; + D_+ D_-^{\half-2\epsilon} \Lambda^{-1} u\, D_+^{\half+2\epsilon} \Lambda^{-1}v
 + D_+^{\half+2\epsilon} \Lambda^{-1}  u \,D_+ D_-^{\half-2\epsilon} \Lambda^{-1} v \, .
\end{align}
\end{lemma}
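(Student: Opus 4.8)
The plan is to treat $\Gamma^1(u,v) = -uv + \Lambda^{-1}R_j(\partial_t u)\,\Lambda^{-1}R^j(\partial_t v)$ by a symbol-level comparison, following Tesfahun \cite{Te}. On the Fourier side, writing $(\tau_1,\xi_1)$ and $(\tau_2,\xi_2)$ for the frequencies of $u$ and $v$, the symbol of the second term is, up to the harmless bounded Riesz factors $\xi_1\cdot\xi_2/(|\xi_1||\xi_2|)$, equal to $\angles{\xi_1}^{-1}\angles{\xi_2}^{-1}\tau_1\tau_2$. The cancellation we want to exploit is the identity $\tau_1\tau_2 - |\xi_1||\xi_2| = -\tfrac12\big((|\tau_1|-|\xi_1|)(|\tau_2|+|\xi_2|) + (|\tau_1|+|\xi_1|)(|\tau_2|-|\xi_2|)\big) + (\text{lower order})$ when $\tau_1,\tau_2$ have the same sign, and the analogous identity with a plus sign otherwise; in all cases one gains a factor $\angles{|\tau_1|-|\xi_1|}$ or $\angles{|\tau_2|-|\xi_2|}$ out of one of the two inputs, at the cost of a factor $\angles{|\tau|+|\xi|}$ on the other. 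This is exactly the structure already isolated for the standard null forms in Lemma \ref{Lemma2.2}, and indeed $\Gamma^1$ behaves like a null form of $Q$-type acting on $\Lambda^{-1}u,\Lambda^{-1}v$, which is why the symbol of $\Gamma^1_2$ in \eqref{44} carries $D_+^{1/2+2\epsilon}\Lambda^{-1}$ on each factor together with one of the three distributions of the extra $D_+^{1/2-2\epsilon}D_-^{1/2-2\epsilon}$ weight.

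Concretely, first I would reduce to the $\mathbb{C}$-valued scalar case (the matrix commutator plays no role) and split into the region where $|\xi_1|\sim|\xi_2|$ dominates and the region where one frequency is much larger. In the first region, $\angles{\xi_1}^{-1}\angles{\xi_2}^{-1}\approx \angles{\xi_0}^{-1}\max(\angles{\xi_1},\angles{\xi_2})^{-1}$ and the gain from the null symbol, interpolated between the full null-form bound of \cite{KMBT}, Prop.~1 and the trivial bound $|\tau_1\tau_2|\lesssim (|\tau_1|+|\xi_1|)(|\tau_2|+|\xi_2|)$ just as in the proof of \eqref{42'}, yields precisely the symbol $\Gamma^1_2$ in \eqref{44}; the passage from \eqref{44} to \eqref{45} is the fractional Leibniz rule for $D_+$, exactly as \eqref{41'} and \eqref{42''} were derived. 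The term $-uv$ must be combined with the part of the product symbol $|\xi_1||\xi_2|$ that survives the cancellation; after factoring out the $\angles{\xi_i}^{-1}$ the leading piece is a genuine product $D^{-1/2+2\epsilon}u\,D^{-1/2+2\epsilon}v$ dressed with $D^{1/2-2\epsilon}D_-^{1/2-2\epsilon}$ (and its two Leibniz redistributions), which is $\Gamma^1_1$ in \eqref{43}; the remainder is lower order and is absorbed into $(\Lambda^{-2}u)v + u(\Lambda^{-2}v)$. This gives \eqref{45'}.

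For \eqref{45''} I would simply not extract any $D_-$ gain from the $-uv$ piece — bounding its symbol crudely by $\widehat{|u|}\ast\widehat{|v|}$, i.e. by $uv$ itself — and keep the null-structure gain only in the second term, producing $uv + \Gamma^1_2(u,v)$; this is the cheaper estimate used where the extra $\Lambda^{-2}$ smoothing of \eqref{45'} is not needed but one wants to avoid the negative-order $D^{-1/2+2\epsilon}$ weights of $\Gamma^1_1$. The main obstacle is purely bookkeeping: one must verify that in every subregion (same sign vs.\ opposite sign of $\tau_1,\tau_2$; which of $|\xi_0|,|\xi_1|,|\xi_2|$ is largest) the algebraic identity for $\tau_1\tau_2\mp|\xi_1||\xi_2|$ really does distribute the weights exactly into one of the three listed terms, and that the ``lower order'' remainders are all dominated by $\Lambda^{-2}$-type contributions — there is no analytic difficulty beyond the null-form symbol estimate of \cite{KMBT} already invoked, so I would present the identity and the case split and leave the elementary symbol comparisons to the reader, as the statement's phrasing ("was essentially given by Tesfahun") invites.
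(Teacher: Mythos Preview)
Your decomposition has a genuine gap. You call the Riesz factor $\cos\theta=\xi_1\cdot\xi_2/(|\xi_1||\xi_2|)$ ``harmless bounded'' and then work with the identity for $\tau_1\tau_2-|\xi_1||\xi_2|$. After this substitution you are left (at high spatial frequency) with a residual $-1+\cos\theta$, which you assert is absorbed by $\Gamma^1_1$. But every term of $\Gamma^1_1$ carries an outer $D^{\frac12-2\epsilon}$, i.e.\ a factor $|\xi_1+\xi_2|^{\frac12-2\epsilon}$ on the output, and this vanishes when $\xi_1=-\xi_2$. In that configuration (take both inputs on the cone with the same temporal sign) your ``gain'' term $\tau_1\tau_2-|\xi_1||\xi_2|$ is exactly zero, while the residual $-1+\cos\theta=-2$; so neither $\Gamma^1_1$ nor your null remainder captures the full symbol. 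The $\Lambda^{-2}$ error terms cannot help either, since they decay in $\langle\xi_i\rangle$.

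The paper's argument avoids this by a different algebraic split: one adds and subtracts $\dfrac{(\xi_1\cdot\xi_2)^2}{\langle\xi_1\rangle^2\langle\xi_2\rangle^2}$, not $\dfrac{|\xi_1||\xi_2|(\xi_1\cdot\xi_2)}{\langle\xi_1\rangle^2\langle\xi_2\rangle^2}$. This produces
\[
p \;=\; \underbrace{\Bigl(-1+\tfrac{(\xi_1\cdot\xi_2)^2}{\langle\xi_1\rangle^2\langle\xi_2\rangle^2}\Bigr)}_{I}
\;+\;\underbrace{\tfrac{(\xi_1\cdot\xi_2)\,(\tau_1\tau_2-\xi_1\cdot\xi_2)}{\langle\xi_1\rangle^2\langle\xi_2\rangle^2}}_{II}\,.
\]
Part $I$ is purely spatial, $|I|\le \sin^2\angle(\xi_1,\xi_2)+\langle\xi_1\rangle^{-2}+\langle\xi_2\rangle^{-2}$, and the $\sin^2\theta$ is then controlled via the \cite{KMBT} angle estimate, yielding $\Gamma^1_1$ together with the $(\Lambda^{-2}u)v+u(\Lambda^{-2}v)$ terms. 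Part $II$ is a bounded multiple of the $Q_0$ symbol $\tau_1\tau_2-\xi_1\cdot\xi_2$ (not $\tau_1\tau_2-|\xi_1||\xi_2|$) divided by $\langle\xi_1\rangle\langle\xi_2\rangle$, i.e.\ $Q_0(\Lambda^{-1}u,\Lambda^{-1}v)$, and \eqref{41} with $\alpha=\tfrac12+2\epsilon$ gives $\Gamma^1_2$. In the test case $\xi_1=-\xi_2$ one has $I=0$ (consistent with $\Gamma^1_1=0$) while $II$ carries the full size and is handled by $\Gamma^1_2$. For \eqref{45''} one simply uses the trivial bound $|I|\le 1$ in place of the $\sin^2\theta$ estimate. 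The point you missed is that the Riesz factor is the source of the angular null structure, not a bounded nuisance: it is the cancellation $-1+\cos^2\theta=-\sin^2\theta$ (square, not first power) that makes $\Gamma^1_1$ work.
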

\begin{proof}
$\Gamma^1(u,v)$ has the symbol
\begin{align*}
p(\xi,\tau,\eta,\lambda)& = -1 + \frac{\angles{\xi,\eta} \tau \lambda}{\angles{\xi}^2 \angles{\eta}^2} = \left( -1 + \frac{\angles{\xi,\eta} \angles{\xi,\eta}}{\angles{\xi}^2 \angles{\eta}^2} + \frac{(\tau \lambda -\angles{\xi,\eta}) \angles{\xi,\eta}}{\angles{\xi}^2 \angles{\eta}^2} \right) = I + II
\end{align*}
Now we estimate
\begin{align*}
|I| & = \left| \frac{|\xi|^2 |\eta|^2\cos^2 \angle(\xi,\eta)}{\angles{\xi}^2 \angles{\eta}^2} -1 \right| \\
&
\le \left| \frac{\angles{\xi}^2 \angles{\eta}^2 \cos^2 \angle(\xi,\eta)}{\angles{\xi}^2 \angles{\eta}^2} -1 \right|
+ \left| \frac{|\xi|^2 |\eta|^2 - \angles{\xi}^2 \angles{\eta}^2}{\angles{\xi}^2 \angles{\eta}^2} \right| \\
& = \sin^2 \angle(\xi,\eta) + \left| \frac{|\xi|^2 |\eta|^2 - \angles{\xi}^2 \angles{\eta}^2}{\angles{\xi}^2 \angles{\eta}^2} \right| \, ,
\end{align*}
where $\angle(\xi,\eta)$ denotes the angle between $\xi$ and $\eta$ .
We have
$$\left| \frac{|\xi|^2 |\eta|^2 - \angles{\xi}^2 \angles{\eta}^2}{\angles{\xi}^2 \angles{\eta}^2} \right| = \frac{|\xi|^2 + |\eta|^2 + 1}{\angles{\xi}^2 \angles{\eta}^2} \le \frac{1}{\angles{\xi}^2} + \frac{1}{\angles{\eta}^2} $$
and
\begin{align*}
&\sin^2 \angle(\xi,\eta) \le |\sin \angle(\xi,\eta)|^{1-4\epsilon} = |1-\cos \angle(\xi,\eta)|^{\half -2\epsilon} |1+\cos \angle(\xi,\eta)|^{\half - 2\epsilon} \\
& \lesssim \frac{|\xi + \eta|^{\half -2\epsilon}}{|\xi|^{\half -2\epsilon} |\eta|^{\half - 2\epsilon}} \left( ||\tau|-|\xi||^{\half-2\epsilon} + ||\lambda|-|\eta||^{\half-2\epsilon} + ||\tau + \lambda| - |\xi + \eta|^{\half-2\epsilon} \right)
\end{align*}
for $0 \le \epsilon \le \frac{1}{4}$
by \cite{KMBT}, Proof of proposition 1. Thus the operator belonging to the symbol I is controlled by $\Gamma^1_1(u,v) + (\Lambda^{-2}u)v + u (\Lambda^{-2} v)$ . 
Moreover 
$$ |II| \le \frac{|\tau \lambda - \angles{\xi,\eta}|}{\angles{\xi} \angles{\eta}} \, .$$ 
This is the symbol of $Q_0(\Lambda^{-1}u,\Lambda^{-1}v)$ , which is controlled by $\Gamma^1_2(u,v)$
by (\ref{41}). Thus we obtain (\ref{45'}) and using the trivial bound $|I| \lesssim 1$  also (\ref{45''}). Finally, (\ref{45}) follows by the fractional Leibniz rule for $D_+$ from (\ref{44}).
\end{proof}

\section{ Proof of the nonlinear estimates in the case $n \ge 4$}
\noindent
{\bf Important remark:} We assume in the following that the Fourier transforms of $u$ and $v$ are nonnegative. This means no loss of the generality, because the norms involved in the desired estimates do only depend on the size of the Fourier transforms.
\begin{proof}[Proof of (\ref{28})]
 We recall (\ref{41'}) for $\alpha = \epsilon$ :
\begin{align*}
Q_0(u,v) &\precsim  D_-^{1-\epsilon} (D_+ u D_+^{\epsilon} v) +   D_-^{1-\epsilon} (D_+^{\epsilon} u D_+ v)   \\
  &  \hspace{1em}  + (D_+ D_-^{1-\epsilon} u)(D_+^{\epsilon} v) + (D_+^{\epsilon} u)(D_+ D_-^{1-\epsilon} v) \, .
\end{align*}
Thus we have to show the following estimates and remark that we only have to consider the first and third term, because the other terms are equivalent by symmetry. \\
1. For the first term it suffices to show
\begin{align*}
 &\| \Lambda_+^{-1} \Lambda_-^{-1+\epsilon} \Lambda^{r-1} \Lambda_+  D_-^{1-\epsilon}(D_+ u D_+^{\epsilon} v) \|_{H^{0,\half+\epsilon}} \\
&\hspace{1em}\lesssim \| \Lambda^{s-1} \Lambda_+ u \|_{H^{0,\half+\epsilon}} \| \Lambda^{s-1} \Lambda_+ v \|_{H^{0,\half+\epsilon}} \,. 
\end{align*}
This follows from
$$ \|uv\|_{H^{r-1,\half+\epsilon}} \lesssim \|u\|_{H^{s-1,\half+\epsilon}} \|v\|_{H^{s-\epsilon,\half+\epsilon}} \, , $$
which is a consequence of Prop. \ref{Prop.3.8}, if $2s-r-\epsilon > \frac{n}{2}$ and $s\ge r$. This is fulfilled for a sufficiently small $\epsilon > 0$ under our assumptions. \\
2. For the third term we show
\begin{align*}
& \| \Lambda_+^{-1} \Lambda_-^{-1+\epsilon} \Lambda^{r-1} \Lambda_+ (D_+ D_-^{1-\epsilon}u D_+^{\epsilon} v) \|_{H^{0,\half+\epsilon}}
\lesssim \| \Lambda^{s-1} \Lambda_+ u \|_{H^{0,\half+\epsilon}} \| \Lambda^{s-1} \Lambda_+ v \|_{H^{0,\half+\epsilon}} \,.
\end{align*}
Thus it suffices to show
$$\|uv\|_{H^{r-1,-\half+2\epsilon}} \lesssim \|u\|_{H^{s-1,-\half+2\epsilon}} \|v\|_{H^{s-\epsilon,\half+\epsilon}} \, .$$
By duality this is equivalent to
$$\|uv\|_{H^{1-s,\half-2\epsilon}} \lesssim \|u\|_{H^{1-r,\half-2\epsilon}} \|v\|_{H^{s-\epsilon,\half+\epsilon}} \, ,$$
which is a consequence of Prop. \ref{Prop.3.8} as in 1. under the same assumptions. 
\end{proof}

\begin{proof}[Proof of (\ref{27})]
We use (\ref{42'}). By symmetry we only have to consider the first two terms.\\
1. For the first term it suffices to show
\begin{align*}
 &\| \Lambda_+^{-1} \Lambda_-^{-1+\epsilon} \Lambda^{r-1} \Lambda_+ \Lambda_+^{\half-2\epsilon} \Lambda_-^{\half-2\epsilon}(\Lambda_+^{\half+2\epsilon} u  \Lambda_+^{\half+2\epsilon} v) \|_{H^{0,\half+\epsilon}} \\
&\hspace{1em}\lesssim \| \Lambda^{s-1} \Lambda_+ u \|_{H^{0,\half+\epsilon}} \| \Lambda^{s-1} \Lambda_+ v \|_{H^{0,\half+\epsilon}} \,. 
\end{align*}
This follows from
$$ \|uv\|_{H^{r-\half-2\epsilon,0}} \lesssim \|u\|_{H^{s-\half-2\epsilon,\half+\epsilon}} \|v\|_{H^{s-\half-2\epsilon,\half+\epsilon}} \, , $$
which is a consequence of Prop. \ref{Prop.1.2} with parameters $s_0=\half-r+2\epsilon$ , $s_1 = s_2 = s-\half-2\epsilon$  , so that $s_0+s_1+s_2 > \frac{n-1}{2} $ , if $2s-r > \frac{n}{2}$ , and $s_0+s_1+s_2+s_1+s_2 > \frac{n}{2}$, which holds under our assumptions. \\
2. For the second term we show
\begin{align*}
& \| \Lambda_+^{-1} \Lambda_-^{-1+\epsilon} \Lambda^{r-1} \Lambda_+ \Lambda_+^{\half-2\epsilon} (\Lambda_+^{\half+2\epsilon} \Lambda_-^{\half-2\epsilon}u \Lambda_+^{\half+2\epsilon} v) \|_{H^{0,\half+\epsilon}} \\
&\lesssim \| \Lambda^{s-1} \Lambda_+ u \|_{H^{0,\half+\epsilon}} \| \Lambda^{s-1} \Lambda_+ v \|_{H^{0,\half+\epsilon}} \,.
\end{align*}
Using $\Lambda_-^{4\epsilon}u \precsim \Lambda_+^{4\epsilon}u$ it suffices to show
$$\|uv\|_{H^{r-\half+2\epsilon,-\half-2\epsilon}} \lesssim \|u\|_{H^{s-\half-2\epsilon,0}} \|v\|_{H^{s-\half-2\epsilon,\half+\epsilon}} \, $$
which is a consequence of Prop. \ref{Prop.1.2} as in 1. under the assumptions $2s-r > \frac{n}{2}$ and $3s-2r > \frac{n+1}{2}$ , which hold under our assumptions.
\end{proof}

\begin{proof}[Proof of (\ref{32})]
{\bf A.} We start with the first part of the $F^s$-norm.
As before it is easy to see that we can reduce to
$$ \|uv\|_{H^{s-1,-\half+2\epsilon}} \lesssim \|u\|_{H^{r+1,\half+\epsilon}} \|v\|_{H^{r,\half+\epsilon}}\,. $$
Thus we have to show the following estimates
\begin{align}
\label{48}
\|uv\|_{H^{0,-\half+2\epsilon}} &\lesssim \|u\|_{H^{r-s+2,\half+\epsilon}} \|v\|_{H^{r,\half+\epsilon}} \, ,\\ 
\label{49}
\|uv\|_{H^{0,-\half+2\epsilon}} &\lesssim \|u\|_{H^{r+1,\half+\epsilon}} \|v\|_{H^{r-s+1,\half+\epsilon}} \, .
\end{align}
1. If $2r-s > \frac{n}{2}-\frac{5}{2}$ both estimates are fulfilled by Proposition \ref{Prop.1.2} . \\
2. Assume now that $2r-s \le \frac{n}{2}-\frac{5}{2}$ and that our assumption $2r-s > \frac{n}{2} > \frac{n}{2}-3$ is fulfilled. We want to apply Corollary \ref{Cor.3.3} with the parameters $\alpha _1=r-s+2$, $\alpha_2 = r$  for  (\ref{48}), and $\alpha _1=r+1$, $\alpha_2 = r-s+1$  for  (\ref{49}). We check the conditions of that Corollary in either case.
\begin{itemize}
\item (\ref{LV2}): The condition $\alpha_1+\alpha_2 = 2r-s+2 = \frac{n}{2}-\frac{1}{q}$ can be met with a suitable $1 < q \le 2 $ under our assumption $\frac{n}{2}-\frac{5}{2} \ge 2r-s > \frac{n}{2}-3$ .
\item (\ref{LV4}): Using this $q$ the condition $\frac{1}{q} < \frac{n-1}{4}$ is equivalent to $2r-s > \frac{n}{4} - \frac{7}{4}$. If $n=4$ this means $2r-s > - \frac{3}{4}$ , whereas for $n\ge 5$ it is automatically fulfilled under our assumption $2r-s > \frac{n}{2}-3$ .
\item (\ref{LV12}): We need using (\ref{LV2}) $r+1 < \frac{n}{2}+\half - \frac{2}{q} =\frac{n}{2}+\half -n+4+4r-2s$ , which is equivalent to our assumption $3r-s > \frac{n}{2}-\frac{7}{2}$ .
\end{itemize}
{\bf B.} For the second part of the $F^s$-norm we reduce to
$$
\| \Lambda_+^{-1} \Lambda_-^{-1+\epsilon} \Lambda_+^{\half+2\epsilon} \Lambda^{-\frac{3}{4}+3\epsilon} \Lambda_-^{\half} (uv)\|_{{\mathcal L}^1_t {\mathcal L}^{4n}_x} \lesssim \|\Lambda_+ u\|_{H^{r,\half+\epsilon}} \|\Lambda_+ v\|_{H^{r-1,\half+\epsilon}} $$
and further to
$$ \| \Lambda^{-\frac{5}{4}+7\epsilon} \Lambda_-^{-\half-\epsilon} (uv) \|_{{\mathcal L}^1_t {\mathcal L}^{4n}_x} \lesssim \|u\|_{H^{r+1,\half+\epsilon}} \| v \|_{H^{r,\half+\epsilon}} \, . $$
By Prop. \ref{Prop.4.8} we obtain
$$ \| \Lambda^{-\frac{5}{4}+7\epsilon} \Lambda_-^{-\half-\epsilon} (uv) \|_{{\mathcal L}^1_t {\mathcal L}^{4n}_x} \lesssim \| \Lambda^{\frac{n}{2}-2+7\epsilon} (uv) \|_{{\mathcal L}^1_t {\mathcal L}^2_x} \, , $$
which by the fractional Leibniz rule, Sobolev and Prop. \ref{mStr} can be estimated as follows:
\begin{align*}
\| (\Lambda^{\frac{n}{2}-2+7\epsilon} u) v \|_{L^1_t L^2_x} & \lesssim \|\Lambda^{\frac{n}{2}-2+7\epsilon} u \|_{L^2_t L^{\frac{4n}{2n-7}}_x} \| v \|_{L^2_t L^{\frac{4n}{7}}_x} \\
& \lesssim \| \Lambda^{\frac{n}{2}-\frac{3}{4}+7\epsilon} u \|_{H^{0,\half+\epsilon}} \|v\|_{H^{\frac{n}{2}-\frac{7}{4},0}} \lesssim \|u\|_{H^{r+1,\half+\epsilon}} \|v\|_{H^{r,\half+\epsilon}} \, , \\
\| u \Lambda^{\frac{n}{2}-2+7\epsilon} v\|_{L^1_t L^2_x} & \lesssim \|u\|_{L^2_t L^{4n}_x} \| \Lambda^{\frac{n}{2}-2+7\epsilon} v \|_{L^2_t L^{\frac{4n}{2n-1}}_x} \\
& \lesssim \|u\|_{H^{\frac{n}{2}-\frac{3}{4},\half+\epsilon}} \| v \|_{H^{\frac{n}{2}-\frac{7}{4}+7\epsilon,0}}
 \lesssim \|u\|_{H^{r+1,\half+\epsilon}} \|v\|_{H^{r,\half+\epsilon}} \, .
\end{align*}
\end{proof}

\begin{proof}[Proof of (\ref{29})]
{\bf A.} For the first part of the $F^s$-norm it is sufficient to show
\begin{equation}
\label{G1}
\| \Gamma^1(u,v) \|_{H^{s-1,-\half+2\epsilon}} \lesssim \|u\|_{F^s} \|\Lambda_+ v\|_{H^{s-2,\half+\epsilon}} \, . 
\end{equation}\
We use Lemma \ref{Lemma2.1}. \\
a. We first consider $\Gamma_2^1(u,v)$ . By (\ref{45}) it suffices to show the following estimates, all of which are consequences of Proposition \ref{Prop.1.2} and Corollary \ref{Cor.1.1} . \\
$$
\|D_+ \Lambda^{-1} u D_+^{\half+2\epsilon} \Lambda^{-1} v \|_{H^{s-1,0}}\ \lesssim \|\Lambda_+ u\|_{H^{s-1,\half+\epsilon}}  \|\Lambda_+ v\|_{H^{s-2,\half+\epsilon}}  , ,$$
which follows from
$$ \|uv\|_{H^{s-1,0}} \lesssim \|u\|_{H^{s,\half+\epsilon}} \|v\|_{H^{s-\half-2\epsilon,\half+\epsilon}}$$
and
\begin{align*}
\|uv\|_{H^{s-1,0}} & \lesssim \|u\|_{H^{s+\half-2\epsilon,\half+\epsilon}} \|v\|_{H^{s-1,\half+\epsilon}} \, ,\\
\|uv\|_{H^{s-1,-\half+2\epsilon}} & \lesssim \|u\|_{H^{s,0}} \|v\|_{H^{s-\half-2\epsilon,\half+\epsilon}} \, , \\
\|uv\|_{H^{s-1,-\half+2\epsilon}} & \lesssim \|u\|_{H^{s+\half-2\epsilon,\half+\epsilon}} \|v\|_{H^{s-1,0}} \, .
\end{align*}
b.  Assume that $u$ and $v$ have frequencies $ \ge 1$ ,  so that $\Lambda_+^{\alpha} u \sim D_+^{\alpha}u$. In this case we use (\ref{45'}) and consider $\Gamma^1_1(u,v)$ . By (\ref{43}) we may reduce estimates for the  first and third terms on the right hand side to 
\begin{align*}
\|uv\|_{H^{s-\half-2\epsilon,0}} & \lesssim \|u\|_{H^{s+\half-2\epsilon,\half+\epsilon}} 
\|v\|_{H^{s-\half-2\epsilon,\half+\epsilon}} \, , \\
\|uv\|_{H^{s-\half-2\epsilon,-\half+2\epsilon}} & \lesssim \|u\|_{H^{s+\half-2\epsilon,\half+\epsilon}} 
\|v\|_{H^{s-\half-2\epsilon,0}} \, .
\end{align*}
Both estimates follow from Corollary \ref{Cor.1.1} .  The second term is reduced to the following estimate
\begin{align}
\label{R1}
  \|uv\|_{H^{s-\half-2\epsilon,-\half+2\epsilon}} \lesssim \|\Lambda^{\half-2\epsilon} \Lambda_-^{-\half+2\epsilon} u \|_{F^s} \|v \|_{H^{s-\half-2\epsilon,\half + \epsilon}} \, .
\end{align}
By the fractional Leibniz rule we have to show the following two estimates:\\
b1. $$\|(\Lambda^{s-\half-2\epsilon}u)v\|_{H^{0,-\half+2\epsilon}} \lesssim 
\|u \|_{H^{s+\half-2\epsilon,0}} \|v \|_{H^{s-\half-2\epsilon,\half + \epsilon}} \, ,$$
which follows from Corollary \ref{Cor.1.1}. \\
b2. $$\|u( \Lambda^{s-\half-2\epsilon}v)\|_{H^{0,-\half+2\epsilon}} \lesssim \|\Lambda^{\half-2\epsilon} \Lambda_-^{-\half + 2\epsilon} u \|_{F^s} \|v \|_{H^{s-\half-2\epsilon,\half + \epsilon}} \, .$$
Using the definition of the $F^s$-norm this is implied by  
\begin{equation}
\label{*}
\|uv\|_{H^{0,-\half+2\epsilon}} \lesssim (\|\Lambda^{\half-2\epsilon}\Lambda^{-\frac{1}{4}+5\epsilon} \Lambda_-^{2\epsilon}u\|_{{\mathcal L}_t^1 {\mathcal L}_x^{4n}} + \|\Lambda^{\half-2\epsilon} u\|_{H^{s,0}})\|v\|_{H^{0,\half+\epsilon}} \, .
\end{equation}
By Lemma \ref{Lemma8.10} we have for $\alpha,\beta \ge 0$ :
$$\|uv\|_{H^{0,-\beta}} \lesssim \|(\Lambda^{-\beta} u) v\|_{H^{0,0}} + \|(\Lambda^{\alpha} u) v\|_{H^{0,-\beta-\alpha}}$$for $u,v$ with $\widehat{u},\widehat{v} \ge 0$ . Thus for the choice $\beta=\half-2\epsilon$ , $\alpha= \frac{5}{2}\epsilon$ the estimate (\ref{*}) reduces to the following two estimates:\\
b2.1.
 \begin{align*}
\|(\Lambda^{-\half+2\epsilon} u)v\|_{H^{0,0}} & \lesssim \|\Lambda^{\half-\epsilon} u\|_{H^{s,0}} \|v\|_{H^{0,\half + \epsilon}} \\
\Longleftrightarrow \quad \|uv\|_{H^{0,0}} & \lesssim \|u\|_{H^{s+1-3\epsilon,0}} \|v\|_{H^{0,\half + \epsilon}} \, .
\end{align*}
This follows by Sobolev, because $s > \frac{n}{2} - 1$ . \\
b2.2.
 \begin{align*}
 \|(\Lambda^{\frac{5}{2}\epsilon} u) v \|_{H^{0,-\half-\epsilon}} &\lesssim \| \Lambda^{\frac{1}{4}+3\epsilon} u \|_{{\mathcal L}_t^1 {\mathcal L}_x^{4n}} \|v\|_{H^{0,\half+\epsilon}} \\
\Longleftrightarrow \quad \|uv\|_{H^{0,-\half-\epsilon}} & \lesssim  \| \Lambda^{\frac{1}{4}+\frac{\epsilon}{2}} u \|_{{\mathcal L}_t^1 {\mathcal L}_x^{4n}} \|v\|_{H^{0,\half+\epsilon}} \, .
\end{align*}
This follows from Prop. \ref{Cor.4.6}  and Prop. \ref{Prop.4.3}:
$$ \|uv\|_{H^{0,-\half-\epsilon}} \lesssim \|uv\|_{{\mathcal L}_t^1 {\mathcal L}_x^2} \lesssim \|u\|_{{\mathcal L}_t^1 {\mathcal L}_x^{\infty}} \|v\|_{L^{\infty}_t L^2_x} \lesssim \| \Lambda^{\frac{1}{4}+\frac{\epsilon}{2}} u \|_{{\mathcal L}_t^1 {\mathcal L}_x^{4n}} \|u\|_{H^{0,\half+\epsilon}} \, . $$
c. Consider $(\Lambda^{-2}u) v$ and $u (\Lambda^{-2}v)$ . It suffices to show
\begin{align*}
\| (\Lambda^{-2}  u)v\|_{H^{s-1,-\half+2\epsilon}} &\lesssim \|u\|_{H^{s,\half + \epsilon}} \|v\|_{H^{s-1,\half+\epsilon}} \, , \\
\|   u(\Lambda^{-2}v)\|_{H^{s-1,-\half+2\epsilon}} &\lesssim \|u\|_{H^{s,\half + \epsilon}} \|v\|_{H^{s-1,\half+\epsilon}}\, .
\end{align*}
Both follow easily from Prop. \ref{SML} under our asumption $s > \frac{n}{2}-1$ . \\
d. Let us finally consider the case where the frequencies of $u$ or $v$ are $\le 1$. We use (\ref{45''}) instead of (\ref{45'}). Because $\Gamma^1_2(u,v)$ has already been handled, we only have to consider $uv$ . If $u$ has low frequencies we obtain by Sobolev's multiplication law  (\ref{SML}):
\begin{align*}
\| uv\|_{H^{s-1,-\half+2\epsilon}} & \lesssim \|u\|_{H^{\frac{n}{2}+,\half+\epsilon}} \|v\|_{H^{s-1,\half+\epsilon}} \\
& \lesssim \|u\|_{H^{s,\half+\epsilon}} \|v\|_{H^{s-1,\half+\epsilon}} \, .
\end{align*}
Similarly we treat the case where $v$ has low frequencies.
\\[1em]
{\bf B.} Now we consider the second part of the $F^s$-norm. We want to show
\begin{equation}
\label{G2}
 \|\Lambda_+^{-1} \Lambda_-^{-1+\epsilon} \Lambda_+^{\half+2\epsilon} \Lambda^{-\frac{3}{4}+3\epsilon} \Lambda_-^{\half} \Gamma^1(u,v)\|_{{\mathcal L}_t^1 {\mathcal L}_x^{4n}} \lesssim \|u\|_{F^s} \| \Lambda_+ v\|_{H^{s-2,\half+\epsilon}} \, . 
\end{equation}
We use Lemma \ref{Lemma2.1} . \\
a. We first consider $\Gamma^1_2(u,v)$ . \\
1. The estimate for the first term on the right hand side of (\ref{44}) reduces to
\begin{align*}
& \|\Lambda_+^{-1} \Lambda_-^{-1+\epsilon} \Lambda_+^{\half+2\epsilon} \Lambda^{-\frac{3}{4}+3\epsilon} \Lambda_-^{\half} D_+^{\half-2\epsilon} D_-^{\half-2\epsilon} ( D_+^{\half+2\epsilon} \Lambda^{-1} u D_+^{\half+2\epsilon} \Lambda^{-1} v)\|_{{\mathcal L}_t^1 {\mathcal L}_x^{4n}} \\& \hspace{1em}\lesssim \|u\|_{F^s} \| \Lambda_+ v\|_{H^{s-2,\half+\epsilon}}  
\end{align*}
and therefore we only have to prove
\begin{equation}
\label{101}
\| \Lambda^{-\frac{3}{4}+3\epsilon} (uv)\|_{{\mathcal L}_t^1 {\mathcal L}_x^{4n}} \lesssim \|u\|_{H^{s+\half-2\epsilon,\half+\epsilon}} \| \Lambda_+ v\|_{H^{s-\half-2\epsilon,\half+\epsilon}} \, . 
\end{equation}
This follows from Proposition \ref{TheoremE} with parameters $q=2$ , $r=8n$ , $\sigma = \frac{3}{4}-3\epsilon$,  $s_1=\frac{n}{2}-\frac{5}{8}-\epsilon$ and $s_2=\frac{n}{2}-\frac{11}{8}+5\epsilon $ . The claimed estimate follows, because $s_1 < s+\half-2\epsilon$ and $s_2 < s-\frac{1}{2}-2\epsilon$ under our assumption $s > \frac{n}{2}-\frac{7}{8}$ . \\
2. For the second term we have to show
$$ \| \Lambda^{-\frac{5}{4}+5\epsilon} \Lambda_-^{-\half+\epsilon} (uv)\|_{{\mathcal L}_t^1 {\mathcal L}_x^{4n}} \lesssim \|u\|_{H^{s,0}} \|v\|_{H^{s-\half-2\epsilon,\half + \epsilon}} \, , $$
which by Lemma \ref{Lemma8.10} reduces to
\begin{align*}
\| \Lambda^{-\frac{5}{4}+5\epsilon} \Lambda_-^{-\half-\epsilon} (uv)\|_{{\mathcal L}_t^1 {\mathcal L}_x^{4n}} &\lesssim \|u\|_{H^{s-2\epsilon,0}} \|v\|_{H^{s-\half-4\epsilon,\half + \epsilon}} \, .
\end{align*}
 We obtain by Proposition \ref{Prop.4.8}
$$ \| \Lambda^{-\frac{5}{4}+5\epsilon} \Lambda_-^{-\half-\epsilon}
(uv)\|_{{\mathcal L}^1_t {\mathcal L}^{4n}_x} 
\lesssim \|\Lambda^{\frac{n}{2}-2+5\epsilon}
(uv)\|_{{\mathcal L}^1_t {\mathcal L}^2_x}  \ , 
$$
so that by the fractional Leibniz rule this requires the following estimates: \\
2.1. $$\|u \Lambda^{\frac{n}{2}-2+5\epsilon} v\|_{L^1_t L^2_x} \lesssim \|u\|_{L^2_t L^n_x} \|\Lambda^{\frac{n}{2}-2+5\epsilon} v\|_{L^2_t L^{\frac{2n}{n-2}}_x} \, .$$  
By Sobolev the first factor is bounded by $\|u\|_{H^{s-2\epsilon,0}}$ for $s> \frac{n}{2}-1$. For the second factor we use Proposition \ref{mStr}, which gives 
\begin{equation}
\label{***}
\|v\|_{L^2_t L^{\frac{2n}{n-2}}_x} \lesssim \|v\|_{H^{\half + \frac{1}{2n},\half+\epsilon}} \, , 
\end{equation}
thus
 $$                                                                                       \|\Lambda^{\frac{n}{2}-2+5\epsilon} v\|_{L^2_t L^{\frac{2n}{n-2}}_x} \lesssim \|v\|_{H^{\frac{n}{2}-\frac{3}{2}+\frac{1}{2n}+5\epsilon,\half+\epsilon}} \lesssim \|v\|_{H^{s-\frac{1}{2}-4\epsilon,\half+\epsilon}} \, , $$             
because $s> \frac{n}{2}-\frac{7}{8}$ and $n \ge 4$ . \\
2.2. $$\| (\Lambda^{\frac{n}{2}-2+5\epsilon}u) v\|_{L^1_t L^2_x} \lesssim \|\Lambda^{\frac{n}{2}-2+5\epsilon}u\|_{L^2_t L^{\frac{2n}{n-2}_x}} \| v\|_{L^2_t L^n_x} \, .$$ 
The first factor is bounded by Sobolev by $\|u\|_{H^{s-2\epsilon,0}}$ for $s > \frac{n}{2} -1$ and the second factor in the case $n \ge 5$ by Proposition \ref{Str} 
$$ \|v\|_{L^2_t L^n_x} \lesssim \|v\|_{H^{\frac{n}{2}-\frac{3}{2},\half+\epsilon}} \lesssim \|v\|_{H^{s-\half-4\epsilon,\half+\epsilon}} \, , $$
whereas in the case $n=4$ we have to use (\ref{***}) , which implies
$$ \|v\|_{L^2_t L^4_x} \lesssim \|v\|_{H^{\frac{5}{8},\half+\epsilon}} \le \|v\|_{H^{s-\half-4\epsilon,\half+\epsilon}}$$
under our assumption $s > \frac{9}{8}$ .   \\
3. $$ \|\Lambda^{-\frac{5}{4}+5\epsilon} \Lambda_-^{-\half+\epsilon}(uv)\|_{{\mathcal L}_t^1 {\mathcal L}_x^{4n}} \lesssim \|u\|_{H^{s+\half-2\epsilon,\half+\epsilon}} \|v\|_{H^{s-1,0}} \, . $$                   
By Lemma \ref{Lemma8.10} this reduces to the following estimate:
\begin{align}
\label{55}
\|\Lambda^{-\frac{5}{4}+5\epsilon} \Lambda_-^{-\half-\epsilon}(uv)\|_{{\mathcal L}_t^1 {\mathcal L}_x^{4n}} \lesssim \|u\|_{H^{s+\half-4\epsilon,\half+\epsilon}} \|v\|_{H^{s-1-4\epsilon,0}} \, .
\end{align}
We obtain by Proposition \ref{Prop.4.8} 
$$ \|\Lambda^{-\frac{5}{4}+5\epsilon} \Lambda_-^{-\half-\epsilon}(uv)\|_{{\mathcal L}_t^1 {\mathcal L}_x^{4n}} \lesssim \| \Lambda^{\frac{n}{2}-2+5\epsilon} (uv) \|_{{\mathcal L}_t^1 {\mathcal L}_x^2} \ , $$
 so that by Leibniz' rule we argue as follows: \\
3.1. By Sobolev and Strichartz (Proposition \ref{Str}) we estimate
\begin{align*}
\|(\Lambda^{\frac{n}{2}-2+5\epsilon} u) v \|_{L^1_t L^2_x} &\lesssim \|\Lambda^{\frac{n}{2}-2+5\epsilon} u \|_{L^2_t L^{\frac{2n}{n-4}}} \|v\|_{L^2_t L^{\frac{n}{2}}_x} \\
& \lesssim \|u\|_{H^{\frac{n}{2}-\half +5\epsilon,\half+2\epsilon}} \|v\|_{H^{\frac{n}{2}-2,0}} \, ,
\end{align*}
so that the desired estimate follows for $s>\frac{n}{2}-1$ . \\
3.2. Using Proposition \ref{Str} again we also obtain
$$ \| u \Lambda^{\frac{n}{2}-2+5\epsilon} v \|_{L^1_t L^2_x} \lesssim \|u\|_{L^2_t L^{\infty}_x} \|\Lambda^{\frac{n}{2}-2+5\epsilon} v \|_{L^2_t L^2_x} \lesssim \|u\|_{H^{\frac{n}{2}-\half+\epsilon,\half + \epsilon}} \|v\|_{H^{\frac{n}{2}-2+5\epsilon,0}} \ , $$
which is sufficient for $s> \frac{n}{2}-1$ . \\
b1. In the case where $u$ and $v$ have frequencies $ \ge 1$ we use (\ref{45'}) and consider $\Gamma_1^1$ .  \\
1. The estimate for the first term on the right hand side of (\ref{43}) reduces to (\ref{101}). \\
2. The second term on the right hand side of (\ref{43}) reduces to
\begin{align*}
 &\|\Lambda^{-\frac{3}{4}+3\epsilon} \Lambda_-^{-\half+2\epsilon} (\Lambda^{-\half+2\epsilon} u \Lambda^{-\half+2\epsilon} v) \|_{{\mathcal L}_t^1 {\mathcal L}_x^{4n}} \\
&\vspace{1em} \lesssim (\|u\|_{H^{s,0}} + \|\Lambda^{-\frac{1}{4}+5\epsilon} u\|_{{\mathcal L}_t^1 {\mathcal L}_x^{4n}} ) \|v\|_{H^{s-1,\half+\epsilon}} \, .
\end{align*}
By Lemma \ref{Lemma8.10} this requires 
\begin{align}
\nonumber
&\|\Lambda^{-\frac{3}{4}+3\epsilon} \Lambda_-^{-\half-2\epsilon} (\Lambda^{-\half+2\epsilon} u \Lambda^{-\half+2\epsilon} v) \|_{{\mathcal L}_t^1 {\mathcal L}_x^{4n}} \\
\label{57}
&\vspace{1em} \lesssim (\|u\|_{H^{s-2\epsilon,0}} + \|\Lambda^{-\frac{1}{4}+3\epsilon} u\|_{{\mathcal L}_t^1 {\mathcal L}_x^{4n}} ) \|v\|_{H^{s-1-2\epsilon,\half+\epsilon}} \, .
\end{align}
We start with Proposition \ref{Prop.4.8} 
\begin{align}
\label{58'}
\|\Lambda^{-\frac{3}{4}+3\epsilon} \Lambda_-^{-\half-2\epsilon} (\Lambda^{-\half+2\epsilon} u \Lambda^{-\half+2\epsilon} v) \|_{{\mathcal L}_t^1 {\mathcal L}_x^{4n}} \lesssim \|\Lambda^{-\frac{n}{2}-\frac{3}{2}+3\epsilon} (\Lambda^{-\half+2\epsilon} u \Lambda^{-\half+2\epsilon} v) \|_{{\mathcal L}_t^1 {\mathcal L}_x^2}
\end{align}
and use the fractional Leibniz rule to reduce to \\
2.1.
\begin{align*}
\|(\Lambda^{\frac{n}{2}-\frac{3}{2}+3\epsilon} \Lambda^{-\half+2\epsilon} u) (\Lambda^{-\half+2\epsilon} v) \|_{L_t^1  L_x^2}& \lesssim \|\Lambda^{\frac{n}{2}-\frac{3}{2}+3\epsilon} \Lambda^{-\half+2\epsilon} u\|_{L^2_t L^{\frac{2n}{n-2}}_x} \| \Lambda^{-\half+2\epsilon} v \|_{L_t^2  L_x^n} \\
&\lesssim \|\Lambda^{\frac{n}{2}-\frac{3}{2}+3\epsilon} \Lambda^{\half+2\epsilon} u\|_{L^2_t L^2_x} \| v \|_{H^{\frac{n}{2}-2+2\epsilon,\half+\epsilon}} \\
&\lesssim \|u\|_{H^{s-2\epsilon,0}}\ \|v\|_{H^{s-1-2\epsilon,\half+\epsilon}} \, .
\end{align*}
This is implied by Sobolev and a direct application of Proposition \ref{Str}, which is possible for $n\ge 5$, as one easily checks. In the case $n=4$ we use Prop. \ref{mStr}, which gives
$$\|\Lambda^{-\half+2\epsilon} v \|_{L^2_t L^4_x} \lesssim \|v\|_{H^{\frac{1}{8}+2\epsilon,\half+\epsilon}} \le \|v\|_{H^{s-1-2\epsilon,\half+\epsilon}}$$
under our assumption $s > \frac{9}{8}$ .\\
2.2. By use of Prop. \ref{Prop.4.3} for the first step and Cor. \ref{Cor.4.6} for the second step we obtain
\begin{align*}
&\|(\Lambda^{-\half+2\epsilon} u) (\Lambda^{-\frac{n}{2}-\frac{3}{2}+3\epsilon}  \Lambda^{-\half+2\epsilon} v) \|_{{\mathcal L}_t^1 {\mathcal L}_x^2} \\
&\lesssim \|\Lambda^{-\half+2\epsilon} u \|_{{\mathcal L}^1_t {\mathcal L}^{\infty}_x} \| \Lambda^{\frac{n}{2}-\frac{3}{2}+3\epsilon} \Lambda^{-\half+2\epsilon} v \|_{L^{\infty}_t L^2_x} \\
&\lesssim \| \Lambda^{-\half+3\epsilon} u \|_{{\mathcal L}^1_t {\mathcal L}^{4n}_x} \|v\|_{H^{\frac{n}{2}-2+5\epsilon,\half+\epsilon}} \\
&\lesssim \| \Lambda^{-\half+5\epsilon} u \|_{{\mathcal L}^1_t {\mathcal L}^{4n}_x} \|v\|_{H^{s-1-2\epsilon,\half+\epsilon}} \, .
\end{align*}
3. The estimate (\ref{57}) has to be replaced by
\begin{align}
\label{59}
\|\Lambda^{-\frac{3}{4}+3\epsilon} \Lambda_-^{-\half-2\epsilon} (\Lambda^{-\half+2\epsilon} u \Lambda^{-\half+2\epsilon} v) \|_{{\mathcal L}_t^1 {\mathcal L}_x^{4n}} &
\lesssim \|u\|_{H^{s-2\epsilon,\half+\epsilon}}  \|v\|_{H^{s-1-2\epsilon,0}} \, .
\end{align}
 We argue similarly as for 2. We first apply (\ref{58'}) and reduce to the following estimates: \\
3.1. By Sobolev and Prop.\ref{Str} we obtain
\begin{align*}
&\|(\Lambda^{\frac{n}{2}-\frac{3}{2}+3\epsilon} \Lambda^{-\half+2\epsilon} u) (\Lambda^{-\half+2\epsilon} v) \|_{L_t^1  L_x^2} \\
& \lesssim \|\Lambda^{\frac{n}{2}-\frac{3}{2}+3\epsilon} \Lambda^{-\half+2\epsilon} u\|_{L^2_t L^{\frac{2n}{n-3}}_x} \| \Lambda^{-\half+2\epsilon} v \|_{L_t^2  L_x^{\frac{2n}{3}}} \\
&\lesssim \| u\|_{H^{\frac{n}{2}-1+3\epsilon,\half+\epsilon}} \| v \|_{H^{\frac{n}{2}-2+2\epsilon,0}} \\
&\lesssim \|u\|_{H^{s-2\epsilon,\half+\epsilon}}\ \|v\|_{H^{s-1-2\epsilon,0}} \, .
\end{align*}
3.2. Similarly
\begin{align*}
&\| \Lambda^{-\half+2\epsilon} u (\Lambda^{\frac{n}{2}-\frac{3}{2}+3\epsilon}\Lambda^{-\half+2\epsilon} v) \|_{L_t^1  L_x^2} \\
& \lesssim \| \Lambda^{-\half+2\epsilon} u\|_{L^2_t L^{\infty}_x} \| \Lambda^{\frac{n}{2}-\frac{3}{2}+3\epsilon} \Lambda^{-\half+2\epsilon} v \|_{L_t^2  L_x^2} \\
&\lesssim \|\Lambda^{-\half+2\epsilon} u\|_{H^{\frac{n}{2}-\half+\epsilon,\half+\epsilon}} \| \Lambda^{-\half+2\epsilon} v \|_{H^{\frac{n}{2}-\frac{3}{2}+3\epsilon,0}} \\
&\lesssim \|u\|_{H^{s-2\epsilon,\half+\epsilon}}\ \|v\|_{H^{s-1,0}} \, .
\end{align*}
b2. Now we consider the case where $u$ and/or $v$ have frequency $\le 1$ and use (\ref{45''}). It remains to consider the first term $uv$ . Thus it suffices to show
$$ \|\Lambda_+^{-1} \Lambda_-^{-1+\epsilon} \Lambda_+^{\half+2\epsilon} \Lambda^{-\frac{3}{4}+3\epsilon} \Lambda_-^{\half} (uv)\|_{{\mathcal L}^1_t {\mathcal L}^{4n}_x} \lesssim \|u\|_{H^{s,\half+\epsilon}} \|v\|_{H^{s-1,\half+\epsilon}} \, . $$
We crudely estimate the left hand side by
$$ \| \Lambda	^{-\frac{5}{4}+5\epsilon} (uv)\|_{L^1_t L^{4n}_x} \lesssim \|u\|_{H^{s_1,\half+\epsilon}} \|v\|_{H^{s_2,\half+\epsilon}} $$
by use of  Proposition \ref{TheoremE}, where $s_1$ and $s_2$ have to fulfill $s_1+s_2 \ge n-\frac{5}{2}+5\epsilon$ and $s_1,s_2 < \frac{n}{2}-\frac{5}{8}$ . If u has frequency $\le 1$ choose $s_1= \frac{n}{2}-\frac{5}{8}-\epsilon$ , $s_2= \frac{n}{2}-\frac{15}{8}+6\epsilon < s-1.$  The value of $s_1$ is irrelevant in view of the low frequency assumption. If $v$ has frequency $\le 1$ choose $s_1= \frac{n}{2}-1 < s$ , $s_2= \frac{n}{2}-\frac{3}{2}+5\epsilon$ , where the value of $s_2$ is irrelevant. \\
c. According to Lemma \ref{Lemma2.1} we finally have to consider
 $(\Lambda^{-2}u)v$ and $u(\Lambda^{-2}v)$ . We have to show
\begin{align*}
\|\Lambda_+^{-1} \Lambda_-^{-1+\epsilon} \Lambda_+^{\half+2\epsilon} \Lambda^{-\frac{3}{4}+3\epsilon} \Lambda_-^{\half} ((\Lambda^{-2}u)v)\|_{{\mathcal L}^1_t {\mathcal L}^{4n}_x} 
 \lesssim \|\Lambda^{-2}u\|_{H^{s+2,\half+\epsilon}} \|v\|_{H^{s-1,\half+\epsilon}} \, . 
\end{align*}
We argue as in 4. choosing $s_1=\frac{n}{2}-\frac{5}{8}-\epsilon < s+2$ and $s_2=\frac{n}{2}-\frac{15}{8}+6\epsilon < s-1$ . In the same way we also obtain
\begin{align*}
\|\Lambda_+^{-1} \Lambda_-^{-1+\epsilon} \Lambda^{\half+2\epsilon}\Lambda^{-\frac{3}{4}+3\epsilon} \Lambda_-^{\half} (u(\Lambda^{-2}v))\|_{{\mathcal L}^1_t {\mathcal L}^{4n}_x} 
 \lesssim \|u\|_{H^{s,\half+\epsilon}} \|\Lambda^{-2}v\|_{H^{s+1,\half+\epsilon}} 
\end{align*}
by the choice $s_1 = \frac{n}{2}-1 < s$ , $s_2=\frac{n}{2}-\frac{3}{2}+5\epsilon < s+1$ . The proof of (\ref{29}) is now complete.
\end{proof}

\begin{proof}[Proof of (\ref{24}) and (\ref{25})] 
We have to prove
$$ \| \Lambda_+^{-1} \Lambda_-^{-1+\epsilon} Q(u,v) \| _{F^s} \lesssim \|\Lambda u \|_{F^s} \|\Lambda_+ v\|_{H^{s-1,\half+\epsilon}} \, .$$
We want to use (\ref{42'}). \\
{\bf A.} The first part of the $F^s$-norm is handled as follows:\\
1. For the first term we reduce to the estimate
$$ \|uv\|_{H^{s-\half-2\epsilon,0}} \lesssim \|u\|_{H^{s+\half-2\epsilon,\half +\epsilon}} \| v\|_{H^{s-\half-2\epsilon,\half+\epsilon}} \, 
, $$
which follows by Prop. \ref{Prop.1.2}.
 \\
2. For the last term we reduce to
\begin{align*}
\|uv\|_{H^{s-\half-2\epsilon,-\half+2\epsilon}} 
 \lesssim \| u\|_{H^{s+\half-2\epsilon,\half+\epsilon}} \|v \|_{H^{s-\half-2\epsilon,0}} \, ,
\end{align*}
which holds by Prop. \ref{Prop.1.2}. \\
3. For the second term we want to prove
\begin{align}
\label{R2}
\|uv\|_{H^{s-\half-2\epsilon,-\half+2\epsilon}} 
 \lesssim   \|\Lambda^{\half-2\epsilon} \Lambda_-^{-\half+2\epsilon} u\|_{F^s} \|v \|_{H^{s-\half-2\epsilon,\half+\epsilon}} \, ,
\end{align}
which follows from
\begin{align*}
\|uv\|_{H^{s-\half-2\epsilon,-\half+2\epsilon}} 
 \lesssim (\|u\|_{H^{s+\half-2\epsilon,0}} + \|\Lambda^{\frac{1}{4}+3\epsilon} u\|_{{\mathcal L}^1_t {\mathcal L}^{4n}_x}) \|v \|_{H^{s-\half-2\epsilon,\half+\epsilon}} \, ,
\end{align*}
By Lemma \ref{Lemma8.10} this reduces to the following two estimates:
\begin{align*}
\|uv\|_{H^{s-\half-2\epsilon,-\half-\frac{\epsilon}{2}}} &
 \lesssim (\|u\|_{H^{s+\half-\frac{9}{2}\epsilon,0}} + \|\Lambda^{\frac{1}{4}+\frac{\epsilon}{2}} u\|_{{\mathcal L}^1_t {\mathcal L}^{4n}_x}) \|v \|_{H^{s-\half-2\epsilon,\half+\epsilon}} \, , \\
\|uv\|_{H^{s-\half-2\epsilon,0}} 
 &\lesssim \|u\|_{H^{s+1-4\epsilon,0}}  \|v \|_{H^{s-\half-2\epsilon,\half+\epsilon}} \, .
\end{align*}
The last estimate follows immediately from Prop. \ref{SML}. The first estimate is handled by the fractional Leibniz rule:\\
3.1. By Sobolev we have
\begin{align*}
\|u \Lambda^{s-\half-2\epsilon} v\|_{H^{0,-\half-\frac{\epsilon}{2}}} & \lesssim \|u \Lambda^{s-\half-2\epsilon} v\|_{{\mathcal L}^1_t {\mathcal L}^2_x} \\
& \lesssim \|u\|_{{\mathcal L}^1_t {\mathcal L}^{\infty}_x} \|\Lambda^{s-\half-2\epsilon} v\|_{L^{\infty}_t L^2_x} \\
& \lesssim \| \Lambda^{\frac{1}{4}+\frac{\epsilon}{2}} u \|_{{\mathcal L}^1_t {\mathcal L}^{4n}_x} \|v\|_{H^{s-\half-2\epsilon,\half+\epsilon}}  \, .
\end{align*}
3.2.  We obtain
\begin{align*}
\|(\Lambda^{s-\half-2\epsilon}u)  v\|_{H^{0,-\half-\frac{\epsilon}{2}}} &  \lesssim \|u\|_{H^{s+\half-\frac{9}{2}\epsilon,0}} \|v \|_{H^{s-\half-2\epsilon,\half+\epsilon}} 
\end{align*}
by Prop. \ref{Prop.1.2} with parameters $s_0=1-\frac{5}{2}\epsilon$ , $s_1=0$ , $s_2=s-\half-2\epsilon$ , so that $s_0+s_1+s_2=s+\half-\frac{9}{2}\epsilon >  \frac{n-1}{2}$ and $(s_0+s_1+s_2)+s_1+s_2 = 2s-\frac{13}{2}\epsilon > \frac{n}{2} $ , because $s > \frac{n}{2}-\frac{7}{8} > \frac{n}{4} $ for $n \ge 4$ .
\\
{\bf B.} The second part of the $F^s$-norm is handled as follows:\\
1. The first term on the right hand side of (\ref{42'}) requires the estimate
\begin{align*}
& \|\Lambda_+^{-1} \Lambda_-^{-1+\epsilon} \Lambda_+^{\half+2\epsilon} \Lambda^{-\frac{3}{4}+3\epsilon} \Lambda_-^{\half}\Lambda_+^{\half-2\epsilon} \Lambda_-^{\half-2\epsilon}(\Lambda_+^{\half+2\epsilon} u \Lambda_+^{\half+2\epsilon} v)\|_{{\mathcal L}^1_t {\mathcal L}^{4n}_x} \\
& \hspace{1em} \lesssim \| \Lambda_+ u\|_{H^{s,\half+\epsilon}} \|\Lambda_+ v \|_{H^{s-1,\half+\epsilon}} \, ,
\end{align*}
which reduces to
\begin{align*}
 \|  \Lambda^{-\frac{3}{4}+3\epsilon} (uv) \|_{{\mathcal L}^1_t {\mathcal L}^{4n}_x}  \lesssim \| u\|_{H^{s+\half-2\epsilon,\half+\epsilon}}  \| v \|_{H^{s-\half-2\epsilon,\half+\epsilon}} \, .
\end{align*}
This follows by Prop. \ref{TheoremE} with parameters $q=2$ , $r=8n$ , $\sigma= \frac{3}{4}-3\epsilon$ . This requires $s_1,s_2 < \frac{n}{2}-\frac{5}{8}$ and $s_1+s_2 \ge n-2+3\epsilon$ . We choose $s_1= \frac{n}{2}-\frac{5}{8}-\epsilon < s+\half-2\epsilon$ , $s_2=\frac{n}{2}-\frac{11}{8}+4\epsilon < s- \half-2\epsilon$ . \\
2. The estimate for the second term reduces to
\begin{align*}
 \|  \Lambda^{-\frac{3}{4}+2\epsilon} \Lambda_-^{-\half-\epsilon} (uv) \|_{{\mathcal L}^1_t {\mathcal L}^{4n}_x} 
\lesssim  \| \Lambda^{\frac{1}{2}-4\epsilon} \Lambda_-^{-\half+2\epsilon} u\|_{F^s} \| v \|_{H^{s-\half-4\epsilon,\half+\epsilon}} \, .,
\end{align*}
where we used Lemma \ref{Lemma8.10}. By Prop. \ref{Prop.4.8} we obtain the following bound for the left hand side:
$ \| \Lambda^{\frac{n}{2}-\frac{3}{2}+2\epsilon} (uv) \|_{{\mathcal L}^1_t {\mathcal L}^2_x} \, . $ Using the fractional Leibniz rule we estimate: \\
2.1.
\begin{align*}
\| u \Lambda^{\frac{n}{2}-\frac{3}{2}+2\epsilon} v \|_{{\mathcal L}^1_t {\mathcal L}^2_x} & \lesssim \|u\|_{{\mathcal L}^1_t {\mathcal L}^{\infty}_x} \| \Lambda^{\frac{n}{2}+2\epsilon} v \|_{L^{\infty}_t L^2_x}\\
& \lesssim \|\Lambda^{\frac{1}{2}-4\epsilon} \Lambda_-^{-\half+2\epsilon} u\|_{F^s}
\|v\|_{H^{s-\half-4\epsilon,\half+\epsilon}} \,.
\end{align*}
2.2. 
\begin{align*}
\| (\Lambda^{\frac{n}{2}-\frac{3}{2}+2\epsilon}\ u) v \|_{L^1_t  L^2_x} & \lesssim \| \Lambda^{\frac{n}{2}-\frac{3}{2}+2\epsilon} u\|_{L^2_t L^p_x} \|v\|_{L^2_t L^r_x} \\
& \lesssim \| u\|_{H^{s+\half-4\epsilon,0}} \|v\|_{L^2_t L^r_x} \, ,
\end{align*}
where $\frac{1}{p}=\half - \frac{\frac{9}{8}-4\epsilon}{n}$ and $\frac{1}{r} =\frac{\frac{9}{8}-4\epsilon}{n}$ , so that by Sobolev  $H^{s+\half-4\epsilon} \hookrightarrow H^{\frac{n}{2}-\frac{3}{2}+2\epsilon,p}$ for $s>\frac{n}{2}-\frac{7}{8}$ . For the last factor we may apply Prop. \ref{Str} directly in the case $n \ge 5$ , which gives 
$$ \|v\|_{L^2_t  L^r_x} \lesssim \|v\|_{H^{\frac{n}{2}-\frac{9}{8}-\half+4\epsilon,\half+\epsilon}} \lesssim \|v\|_{H^{s-\half-4\epsilon,\half+\epsilon}} \, , $$
whereas in the case $n=4$ we obtain by Prop. \ref{mStr} :
$$\|v\|_{L^2_t L^r_x} \lesssim \|v\|_{H^{\frac{n+1}{2}(\half-\frac{1}{r}),\half+\epsilon}} \lesssim \|v\|_{H^{s-\half-4\epsilon,\half+\epsilon}} \, , $$
as one easily checks.\\
3. The last term on the right hand side of (\ref{42'}) is reduced to
\begin{align*}
 \|  \Lambda^{-\frac{3}{4}+2\epsilon} \Lambda_-^{-\half-\epsilon} (uv) \|_{{\mathcal L}^1_t {\mathcal L}^{4n}_x} 
\lesssim  \|u\|_{H^{s+\half-4\epsilon,\half+\epsilon}} \| v \|_{H^{s-\half-4\epsilon,0}} \, ,
\end{align*}
As before we obtain the following bound for the left hand side:
$ \| \Lambda^{\frac{n}{2}-\frac{3}{2}+2\epsilon} (uv) \|_{{\mathcal L}^1_t {\mathcal L}^2_x} \, . $ Using the fractional Leibniz rule we estimate: \\
3.1. By Sobolev and Prop. \ref{Str} we obtain:
\begin{align*}
\| (\Lambda^{\frac{n}{2}-\frac{3}{2}+2\epsilon}) u) v \|_{L^1_t  L^2_x} & \lesssim \| \Lambda^{\frac{n}{2}-\frac{3}{2}+2\epsilon} u\|_{L^2_t L^r_x} \|v\|_{L^2_t L^p_x} \\
& \lesssim \| u\|_{H^{s+\half-4\epsilon,\half+\epsilon}} \|v\|_{H^{s-\half-4\epsilon,0}} \, ,
\end{align*}
where $\frac{1}{r}=\half - \frac{11}{8n}$ and $\frac{1}{p} =\frac{11}{8n}$ , so that by Sobolev  $H^{s-\half-4\epsilon} \hookrightarrow L^p$ for $s>\frac{n}{2}-\frac{7}{8}$.  For the first factor we may apply Prop. \ref{Str}, which gives 
$$ \|\Lambda^{\frac{n}{2}-\frac{3}{2}+2\epsilon}u\|_{L^2_t  L^r_x} \lesssim \|u\|_{H^{\frac{n}{2}-\frac{5}{8}+2\epsilon,\half+\epsilon}} \lesssim \|u\|_{H^{s+\half-4\epsilon,\half+\epsilon}} \, , $$
as one easily checks. \\
3.2. Similarly
\begin{align*}
\| u  (\Lambda^{\frac{n}{2}-\frac{3}{2}+2\epsilon} v) \|_{L^1_t  L^2_x} & \lesssim \| u\|_{L^2_t L^r_x} \|\Lambda^{\frac{n}{2}-\frac{3}{2}+2\epsilon} v\|_{L^2_t L^q_x} \\
& \lesssim \| u\|_{H^{s+\half-4\epsilon,\half+\epsilon}} \|v\|_{H^{s-\half-4\epsilon,0}} \, ,
\end{align*}
where $\frac{1}{q}=\half - \frac{5}{8n}$ and $\frac{1}{r} =\frac{5}{8n}$ , so that by Sobolev  $H^{s-\half-4\epsilon} \hookrightarrow H^{\frac{n}{2}-\frac{3}{2}+2\epsilon,q}$ for $s>\frac{n}{2}-\frac{7}{8}$.  For the first factor we may apply Prop. \ref{Str}, which gives 
$$ \|u\|_{L^2_t  L^r_x} \lesssim \|u\|_{H^{\frac{n}{2}-\frac{5}{8}-\half,\half+\epsilon}} \lesssim \|v\|_{H^{s+\half-4\epsilon,\half+\epsilon}} \, . $$
\end{proof}

\begin{proof}[Proof of (\ref{26})]
We may reduce to
$$ \|\Lambda_+^{-1} \Lambda_-^{\epsilon -1} \Lambda_+ Q(u,v)\|_{H^{r,\half+\epsilon}} \lesssim \|\Lambda u \|_{F^s} \|\Lambda_+ v\|_{H^{r-1,\half+\epsilon}} \, . $$ 
Now we use (\ref{42''}) with $\epsilon=0$ for $Q(u,v)$ and estimate the six terms as follows: \\
1. The estimate for the first term is reduced to (using the trivial estimate $\Lambda_-^{2\epsilon} u \precsim  \Lambda_+^{2\epsilon } u $) :
$$ \|uv\|_{H^{r-1+2\epsilon,0}} \lesssim \|u\|_{H^{s,\half+\epsilon}} \| v \|_{H^{r-\half,\half+\epsilon}} \, , $$
which follows from Prop. \ref{Prop.1.2}. \\
2. The estimate for the second term reduces to
$$ \|uv\|_{H^{r-1,2\epsilon}} \lesssim \|u\|_{H^{s+\half,\half+\epsilon}} \|v\|_{H^{r-1,\half+\epsilon}} \, , $$
which is a consequence of Cor. \ref{Cor.1.2} under our assumption $2s-r > \frac{n}{2}$ . \\
3. The third term requires
$$ \|uv\|_{H^{r-1,-\half+2\epsilon}} \lesssim \|u\|_{H^{s,0}} \|v\|_{H^{r-\half,\half+\epsilon}} \, . $$ 
4. The forth term similarly reduces to
$$ \|uv\|_{H^{r-1,-\half+2\epsilon}} \lesssim \|u\|_{H^{s+\half,\half+\epsilon}} \|v\|_{H^{r-1,0}} \, . $$ 
6. The estimate for the sixth term similarly follows from
$$ \|uv\|_{H^{r-1,-\half+2\epsilon}} \lesssim \|u\|_{H^{s,\half+\epsilon}} \|v\|_{H^{r-\half,0}} \, . $$ 
These three estimates follow from Cor. \ref{Cor.1.1}. \\
5. The fifth term is the most complicated one. It follows from
$$
\|uv\|_{H^{r-1,-\half+2\epsilon}} \lesssim ( \|u\|_{H^{s+\half,0}} + \|\Lambda^{\frac{1}{4}+5\epsilon} u \|_{{\mathcal L}^1_t {\mathcal L}^{4n}_x} ) \|v\|_{H^{r-1,\half+\epsilon}} \, . 
$$
By use of Lemma \ref{Lemma8.10} we obtain
$$
 \|u v\|_{H^{r-1,-\half+2\epsilon}}   
\lesssim \| (\Lambda^{3\epsilon} u) v\|_{H^{r-1,-\half-\epsilon}} 
+  \| (\Lambda^{-\half+2\epsilon} u) v\|_{H^{r-1,0}}\, . 
$$
The last term is easily estimated by Sobolev for $s > \frac{n}{2}-1$ :
$$  \| (\Lambda^{-\half+2\epsilon} u) v\|_{H^{r-1,0}}  \lesssim \|u\|_{H^{s+1-2\epsilon,0}} \|v\|_{H^{r-1,\half+2\epsilon}} \, . $$
For the first term we want to show
\begin{align}
\label{69}
 \|uv\|_{H^{r-1,-\half-\epsilon}} & \lesssim (\| \Lambda^{\frac{1}{4}+2\epsilon} u \|_{{\mathcal L}^1_t {\mathcal L}^{4n}_x} + \|u\|_{H^{s+\half-3\epsilon,0}} ) \|v\|_{H^{r-1,\half+\epsilon}}
\end{align}
Consider first the case $ r \geq \frac{3}{2}$.
 (\ref{69}) is handled by the fractional Leibniz rule as follows:\\
1.
\begin{align*}
\| u \Lambda^{r-1} v \|_{H^{0,-\half- \epsilon}} &\lesssim \|u \Lambda^{r-1} v\|_{{\mathcal L}^1_t {\mathcal L}^2_x} \lesssim \|u\|_{{\mathcal L}^1_t {\mathcal L}^{\infty}_x} \| \Lambda^{r-1} v\|_{L^{\infty}_t L^2_x} \\
& \lesssim \| \Lambda^{\frac{1}{4}+2\epsilon} u\|_{{\mathcal L}^1_t {\mathcal L}^{4n}_x} 
 \|v\|_{H^{r-1,\half+\epsilon}} \, , 
\end{align*}
where we used Prop. \ref{Cor.4.6}  and Prop. \ref{Prop.4.3} .\\
2. We obtain the estimate 
\begin{align}
\label{100}
 \| (\Lambda^{r-1}u)v\|_{H^{0,-\half-\epsilon}} \lesssim \|u\|_{H^{s+\half-3\epsilon,0}} \|v\|_{H^{r-1,\half+\epsilon}} \, , 
\end{align}
by Prop. \ref{Prop.1.2} under our assumptions $s > \frac{n}{2} - 1$ , $r \geq \frac{3}{2}$ and $s> r-\frac{3}{2}$ , in which case one easily checks the necessary conditions. Thus under these assumptions we proved (\ref{69}). \\
Next we consider the case $ r \le \half $ . (\ref{69}) follows by duality from the estimate
$$
 \|uv\|_{H^{1-r,-\half-\epsilon}} \lesssim (\| \Lambda^{\frac{1}{4}+2\epsilon} u \|_{{\mathcal L}^1_t {\mathcal L}^{4n}_x} + \|u\|_{H^{s+\half-3\epsilon,0}} ) \|v\|_{H^{1-r,\half+\epsilon}} \, . $$
Using the fractional Leibniz rule we obtain
$$ \| (\Lambda^{1-r} u) v\|_{H^{0,-\half-\epsilon}} \lesssim \|u\|_{H^{s+\half-3\epsilon,0}} \|v\|_{H^{1-r,\half+\epsilon}} $$
by Prop. \ref{Prop.1.2} under the assumptions $s>\frac{n}{2}-1$ and $ r \le \half$ (and $s > \half - r$). Moreover similarly as (\ref{100}) : 
\begin{align*}
\|u \Lambda^{1-r} v\|_{H^{0,-\half-\epsilon}}
 \lesssim \| \Lambda^{\frac{1}{4}+2\epsilon} u\|_{{\mathcal L}^1_t {\mathcal L}^{4n}_x} \|v\|_{H^{1-r,\half+\epsilon}} \, .
\end{align*}
Thus we have proven (\ref{69}) for $r \le \half$ and $s> \frac{n}{2} - 1$ as well. \\
By interpolation we also obtain this estimate in the remaining case $ \half < r < \frac{3}{2} $ . 

\end{proof}

\begin{proof}[Proof of (\ref{30}) and (\ref{31})]
(\ref{30}) reduces to the following estimates
$$
\|uv\|_{H^{s-1,-\half+\epsilon}}  \lesssim \|u\|_{H^{s,\half+\epsilon}} \|v\|_{H^{s+1,\half+\epsilon}} \, ,$$
which easily follows from  Prop. \ref{SML}, because $(1-s)+s+(s+1) = s+2 > \frac{n}{2} +1$, and
\begin{align*}
\| \Lambda^{-\frac{5}{4}+5\epsilon} \Lambda_-^{-\half+\epsilon} (uv) \|_{{\mathcal L}^1_t {\mathcal L}^{4n}_x} &\lesssim \| \Lambda^{-\frac{5}{4}+7\epsilon} \Lambda_-^{-\half-\epsilon} (uv) \|_{{\mathcal L}^1_t {\mathcal L}^{4n}_x} \\
&\lesssim \| \Lambda^{\frac{n}{2}-2+7\epsilon} (uv) \|_{{\mathcal L}^1_t {\mathcal L}^2_x} \lesssim \|u\|_{H^{s,0}} \|v\|_{H^{s+1,0}} \, , 
\end{align*}
where we used Prop. \ref{Prop.4.8} and Prop. \ref{SML}, because $(2-\frac{n}{2}-7\epsilon)+s+(s+1) > \frac{n}{2}+1$. The estimate (\ref{31}) is proven in exactly the same way.
\end{proof}

\begin{proof}[Proof of (\ref{33}) and (\ref{34})]
(\ref{33}) reduces to the following estimate
$$ \|uv\|_{H^{r-1,-\half+\epsilon}} \lesssim \|u\|_{H^{s+2,\half+\epsilon}} \|v\|_{H^{r-1,\half+\epsilon}} \, , $$
which follows from Prop. \ref{SML}, because $s+2 > \frac{n}{2} +1$ . Similarly (\ref{34}) can be proven.
\end{proof}

\begin{proof}[Proof of (\ref{35})]
{\bf A.} The estimate for the first part of the $F^s$-norm reduces to
$$ \|uvw\|_{H^{s-1,-\half+2\epsilon}} \lesssim \|u\|_{H^{r+1,\half+\epsilon}} \|v\|_{H^{s,\half+\epsilon}}  \|w\|_{H^{s,\half+\epsilon}} \, . $$
We first assume $s \le \frac{n}{2}-\frac{1}{8}$ .
In a first step we apply Cor. \ref{Cor.1.1}, using our assumptions $s> \frac{n}{2}-\frac{7}{8}$ and $r > \frac{n}{2}-\frac{7}{4} $ . By elementary calculations we obtain
$$ \|uvw\|_{H^{s-1,-\half+2\epsilon}} \lesssim \|u\|_{H^{r+1,\half+\epsilon}} \|vw\|_{H^{2s-\frac{n}{2}+\frac{1}{8},0}} \, .$$
For the second step we obtain by Prop. \ref{Prop.1.2} 
$$\|vw\|_{H^{2s-\frac{n}{2}+\frac{1}{8},0}} \lesssim \|v\|_{H^{s,\half+\epsilon}} \|w\|_{H^{s,\half+\epsilon}} \, . $$
If however $s > \frac{n}{2}-\frac{1}{8}$ we obtain similarly by easy calculations 
$$ \|uvw\|_{H^{s-1,-\half+2\epsilon}} \lesssim \|u\|_{H^{r+1,\half+\epsilon}} \|vw\|_{H^{s,0}} 
\lesssim \|u\|_{H^{r+1,\half+\epsilon}} \|v\|_{H^{s,\half+\epsilon}} \|w\|_{H^{s,\half+\epsilon}}
\, .$$ 
{\bf B.} The estimate for the second part of the $F^s$-norm reduces to
$$ \|\Lambda^{-\frac{5}{4}+5\epsilon} \Lambda_-^{-\half+\epsilon}(uvw) \|_{{\mathcal L}^1_t {\mathcal L}^{4n}_x} \lesssim \|u\|_{H^{r+1,\half+\epsilon}} \|v\|_{H^{s,\half+\epsilon}}  \|w\|_{H^{s,\half+\epsilon}} \, . $$
By Prop. \ref{Prop.4.8} we obtain
$$\|\Lambda^{-\frac{5}{4}+5\epsilon} \Lambda_-^{-\half+\epsilon}(uvw) \|_{{\mathcal L}^1_t {\mathcal L}^{4n}_x} \lesssim \|\Lambda^{\frac{n}{2}-2+7\epsilon} \Lambda_-^{-\half+\epsilon}(uvw) \|_{{\mathcal L}^1_t {\mathcal L}^2_x} \, , $$
which by the fractional Leibniz rule and symmetry in $v$ and $w$ is estimated as follows: \\
1. By Sobolev and Prop. \ref{Str} we obtain under our assumption $s>\frac{n}{2}-\frac{7}{8}$ and $r > \frac{n}{2}-\frac{7}{4}$ :
\begin{align*}
\|(\Lambda^{\frac{n}{2}-2+7\epsilon} u)vw \|_{L^1_t L^2_x} & \lesssim 
\|\Lambda^{\frac{n}{2}-2+7\epsilon} u\|_{L^{\infty}_t L^{\frac{4n}{2n-5}}_x} \| v \|_{L^2_t L^{\frac{8n}{5}}_x} \| w \|_{L^2_t L^{\frac{8n}{5}}_x} \\
& \lesssim  \|u\|_{H^{r+1,\half+\epsilon}} \|v\|_{H^{s,\half+\epsilon}}  \|w\|_{H^{s,\half+\epsilon}} \, . 
\end{align*}
2. Similarly we also obtain
\begin{align*}
\| u (\Lambda^{\frac{n}{2}-2+7\epsilon}v) w \|_{L^1_t L^2_x} & \lesssim 
\| u \|_{L^2_t L^{2n}_x} \|\Lambda^{\frac{n}{2}-2+7\epsilon} v\|_{L^{\infty}_t L^{\frac{2n}{n-2}}_x} \| w \|_{L^2_t L^{2n}_x} \\
& \lesssim  \|u\|_{H^{r+1,\half+\epsilon}} \|v\|_{H^{s,\half+\epsilon}}  \|w\|_{H^{s,\half+\epsilon}} \, . 
\end{align*}
\end{proof}

\begin{proof}[Proof of (\ref{36})]
{\bf A.} We may reduce to
$$ \| u \Lambda^{-1} (vw)\|_{H^{s-1,-\half+2\epsilon}} \lesssim \|u\|_{H^{r,\half+\epsilon}} \|v\|_{H^{s,\half+\epsilon}} \|w\|_{H^{s,\half+\epsilon}} \, . $$
In the case $\frac{n}{2}-\frac{7}{8} < s \le \frac{n}{2} - \half$ we use Cor. \ref{Cor.1.1} and Prop. \ref{Prop.1.2} and obtain
\begin{align*}
 \| u \Lambda^{-1} (vw)\|_{H^{s-1,-\half+2\epsilon}} & \lesssim \|u\|_{H^{r,\half+\epsilon}} \|\Lambda^{-1} (vw)\|_{H^{-\frac{n}{2}+2s+\frac{3}{2},0}} \\
& \lesssim  \|u\|_{H^{r,\half+\epsilon}} \|v\|_{H^{s,\half+\epsilon}} \|w\|_{H^{s,\half+\epsilon}} \, .
\end{align*}
In the case $s > \frac{n}{2} - \half$ we obtain similarly
\begin{align*}
 \| u \Lambda^{-1} (vw)\|_{H^{s-1,-\half+2\epsilon}} & \lesssim \|u\|_{H^{r,\half+\epsilon}} \|\Lambda^{-1} (vw)\|_{H^{s+1,0}} \\
& \lesssim  \|u\|_{H^{r,\half+\epsilon}} \|v\|_{H^{s,\half+\epsilon}} \|w\|_{H^{s,\half+\epsilon}} \, .
\end{align*}
{\bf B.} We have to control
$$\|\Lambda^{-\frac{5}{4}+5\epsilon} \Lambda_-^{-\half+\epsilon}(u\Lambda^{-1}(vw)) \|_{{\mathcal L}^1_t {\mathcal L}^{4n}_x} \lesssim \|\Lambda^{\frac{n}{2}-2+7\epsilon} (u\Lambda^{-1}(vw)) \|_{{\mathcal L}^1_t {\mathcal L}^2_x} \, , $$
where we used Prop. \ref{Prop.4.8}. By the fractional Leibniz rule we have to consider two terms. \\
1. By Sobolev and Prop. \ref{Str} we obtain
\begin{align*}
&\|(\Lambda^{\frac{n}{2}-2+7\epsilon} u)\Lambda^{-1}(vw) \|_{{\mathcal L}^1_t {\mathcal L}^2_x} \lesssim \| \Lambda^{\frac{n}{2}-2+7\epsilon} u \|_{L^{\infty}_t L^2_x} \| \Lambda^{-1}(vw)\|_{L^1_t L^{\infty}_x} \\
&\lesssim  \|u\|_{H^{r,\half+\epsilon}} \|v\|_{L^2_t L^{\frac{8n}{3}}_x} \|w\|_{L^2_t L^{\frac{8n}{3}}_x} 
\lesssim
\| u \|_{H^{r,\half+\epsilon}} \|v\|_{H^{s,\half+\epsilon}}  \|w\|_{H^{s,\half+\epsilon}} \, .
\end{align*}
2. We have to estimate  $\| u \Lambda^{\frac{n}{2}-3+7\epsilon}(vw) \|_{L^1_t L^2_x} $  , which in the case $n \ge 6$ by symmetry and the fractional Leibniz rule reduces to $\| u (\Lambda^{\frac{n}{2}-3+7\epsilon}v)w \|_{L^1_t L^2_x} $ . By Sobolev and Prop. \ref{Str} we obtain
\begin{align*}
 \| u (\Lambda^{\frac{n}{2}-3+7\epsilon}v)w \|_{L^1_t L^2_x}
 & \lesssim \|u\|_{L^2_t L^{\frac{4n}{5}}_x} \| \Lambda^{\frac{n}{2}-3+7\epsilon} v \|_{L^{\infty}_t L^{\frac{8n}{4n-17}}_x} \|w\|_{L^2_t L^{\frac{8n}{7}}_x} \\
& \lesssim \|u\|_{H^{r,\half+\epsilon}} \|v\|_{H^{s,\half+\epsilon}} \|w\|_{H^{s,\half+\epsilon}} \, . 
\end{align*}
If $4 \le n \le 5$ we obtain by the same means
\begin{align*}
\| u \Lambda^{\frac{n}{2}-3+7\epsilon}(vw) \|_{L^1_t L^2_x} & \lesssim \|u\|_{L^{\infty}_t L^{\frac{4n}{7}}_x} \| \Lambda^{\frac{n}{2}-3+7\epsilon}(vw)\|_{L^1_t L^{\frac{4n}{2n-7}}_x} \\
& \lesssim \|u\|_{H^{r,\half+\epsilon}} \|vw\|_{L^1_t L^{\frac{4n}{5}}_x} \\
&\lesssim \|u\|_{H^{r,\half+\epsilon}} \|v\|_{L^2_t L^{\frac{8n}{5}}_x} \|w\|_{L^2_t L^{\frac{8n}{5}}_x} \\
& \lesssim
 \|u\|_{H^{r,\half+\epsilon}} \|v\|_{H^{s,\half+\epsilon}} \|w\|_{H^{s,\half+\epsilon}} \, .
\end{align*}
\end{proof}

\begin{proof}[Proof of (\ref{38})]
{\bf A.} We have to show the estimate
$$ \|uvw\|_{H^{s-1,-\half+2\epsilon}} \lesssim \|u\|_{H^{s,\half+\epsilon}} \|v\|_{H^{s,\half+\epsilon}} \|w\|_{H^{s,\half+\epsilon}} \, . $$
It is sufficient to consider the (minimal) value $s= \frac{n}{2}-1 + \epsilon$ , which immediately implies this for any larger $s$. This follows from Cor. \ref{Cor.1.1} and Prop. \ref{Prop.1.2} :
\begin{align*}
 \|uvw\|_{H^{\frac{n}{2}-2+\epsilon,-\half+2\epsilon}} & \lesssim \|u\|_{H^{\frac{n}{2}-1+\epsilon,\half+\epsilon}} \|vw\|_{H^{\frac{n}{2}-\frac{3}{2}+\epsilon,0}} \\
&\lesssim \|u\|_{H^{\frac{n}{2}-1+\epsilon,\half+\epsilon}}  \|v\|_{H^{\frac{n}{2}-1+\epsilon,\half+\epsilon}}  \|w\|_{H^{\frac{n}{2}-1+\epsilon,\half+\epsilon}} \, . 
\end{align*}
{\bf B.} For the second part of the $F^s$-norm we use Prop. \ref{Prop.4.8} and obtain
\begin{align*}
\| \Lambda^{-\frac{5}{4}+5\epsilon} \Lambda_-^{-\half+\epsilon} (uvw)\|_{{\mathcal L}^1_t {\mathcal L}^{4n}_x} &\lesssim \|\Lambda^{\frac{n}{2}-2+7\epsilon} (uvw)) \|_{{\mathcal L}^1_t {\mathcal L}^2_x} \, .
\end{align*}
Now by symmetry we only have to estimate
\begin{align*}
\| (\Lambda^{\frac{n}{2}-2+7\epsilon} u) vw\|_{L^1_t L^2_x} & \lesssim \| \Lambda^{\frac{n}{2}-2+7\epsilon} u\|_{L^{\infty}_t L^{\frac{2n}{n-2}}_x} \| v\|_{L^2_t L^{2n}_x} \| w\|_{L^2_t L^{2n}_x} \\
& \lesssim \|u\|_{H^{s,\half+\epsilon}} \|v\|_{H^{s,\half+\epsilon}} \|w\|_{H^{s,\half+\epsilon}} \, .
\end{align*}
\end{proof}

\begin{proof}[Proof of (\ref{39})]
It suffices to show
$$ \|uvw\|_{H^{r-1,-\half+2\epsilon}} \lesssim \|u\|_{H^{s,\half+\epsilon}} \|v\|_{H^{s,\half+\epsilon}} \|w\|_{H^{r,\half+\epsilon}} \, , $$
which follows by Cor. \ref{Cor.1.1} and Prop. \ref{Prop.1.2}  in the case $r \le \frac{n}{2}-\half$ from
$$ \|uvw\|_{H^{r-1,-\half+2\epsilon}} \lesssim \|uv\|_{H^{\frac{n}{2}-\frac{3}{2}+\epsilon,0}}\|w\|_{H^{r,\half+\epsilon}} \lesssim \|u\|_{H^{s,\half+\epsilon}} \|v\|_{H^{s,\half+\epsilon}} \|w\|_{H^{r,\half+\epsilon}} \, , $$
whereas in the case $ r > \frac{n}{2} - \half$ we obtain 
$$ \|uvw\|_{H^{r-1,-\half+2\epsilon}} \lesssim \|uv\|_{H^{r-1,0}}\|w\|_{H^{r,\half+\epsilon}} \lesssim \|u\|_{H^{s,\half+\epsilon}} \|v\|_{H^{s,\half+\epsilon}} \|w\|_{H^{r,\half+\epsilon}} \, , $$
using our assumption $2s-r > \frac{n}{2}$ .
\end{proof}

\begin{proof}[Proof of (\ref{37})]
{\bf A.} For the first part of the $F^s$-norm we have to show
$$ \| \Lambda^{-1} (uv) wz \|_{H^{s-1,-\half+2\epsilon}} \lesssim \|u\|_{H^{s,\half+\epsilon}} 
\|v\|_{H^{s,\half+\epsilon}} \|w\|_{H^{s,\half+\epsilon}} \|z\|_{H^{s,\half+\epsilon}} \, . $$
It suffices to consider the minimal value $s=\frac{n}{2}-\frac{7}{8}+\epsilon$ , which by Cor. \ref{Cor.1.1} and Prop. \ref{Prop.3.8} can be estimated as follows:
\begin{align*}
&\| \Lambda^{-1} (uv) wz \|_{H^{\frac{n}{2}-\frac{15}{8}+\epsilon,-\half+2\epsilon}}  \lesssim \| \Lambda^{-1} (uv)  \|_{H^{\frac{n}{2}-\frac{3}{4}+2\epsilon,\half+\epsilon}} \| wz  \|_{H^{\frac{n}{2}-\frac{13}{8}+2\epsilon,0}} \\
& \hspace{1em} \lesssim \| uv  \|_{H^{\frac{n}{2}-\frac{7}{4}+2\epsilon,\half+\epsilon}} \| wz  \|_{H^{\frac{n}{2}-\frac{13}{8}+2\epsilon,0}} \\
& \hspace{1em}\lesssim \| u \|_{H^{\frac{n}{2}-\frac{7}{8}+2\epsilon,\half+\epsilon}} \|v  \|_{H^{\frac{n}{2}-\frac{7}{8}+2\epsilon,\half+\epsilon}} \| w\|_{H^{\frac{n}{2}-\frac{7}{8}+2\epsilon,\half+\epsilon}}  \|z  \|_{H^{\frac{n}{2}-\frac{7}{8}+2\epsilon,\half+\epsilon}} \, . 
\end{align*}
{\bf B.} By Prop. \ref{Prop.4.8} and the fractional Leibniz rule we obtain
\begin{align*}
&\| \Lambda^{-\frac{5}{4}+5\epsilon} \Lambda_-^{-\half+\epsilon} (\Lambda^{-1}(uv)wz))\|_{{\mathcal L}^1_t {\mathcal L}^{4n}_x} \lesssim \| \Lambda^{\frac{n}{2}-2+7\epsilon} ((\Lambda^{-1}(uv)wz)))\|_{{\mathcal L}^1_t {\mathcal L}^2_x}\\
& \lesssim \| \Lambda^{\frac{n}{2}-3+7\epsilon}(uv) (wz)\|_{L^1_t L^2_x} + \| \Lambda^{-1}(uv)\Lambda^{\frac{n}{2}-2+7\epsilon} (wz)\|_{L^1_t L^2_x} \, .
\end{align*}
1. In the case $ n \ge 6$ we reduce the estimate for the first term by symmetry to
\begin{align*}
\| (\Lambda^{\frac{n}{2}-3+7\epsilon}u)v wz\|_{L^1_t L^2_x} & \lesssim \| \Lambda^{\frac{n}{2}-3+7\epsilon}u\|_{L^{\infty}_t L^{\frac{2n}{n-4}}_x} \|v\|_{L^{\infty}_t L^n_x} \|w\|_{L^2_t L^{2n}_x} \|z\|_{L^2_t L^{2n}_x} \\
& \lesssim \|u\|_{H^{\frac{n}{2}-1+7\epsilon,\half+\epsilon}} \|v\|_{H^{\frac{n}{2}-1,\half+\epsilon}} \|w\|_{H^{\frac{n}{2}-1,\half+\epsilon}} \|z\|_{H^{\frac{n}{2}-1,\half+\epsilon}} 
\end{align*}
by Sobolev and Prop. \ref{Prop.1.2}, whereas for $4 \le n \le 5$ we similarly obtain
\begin{align*}
&\| \Lambda^{\frac{n}{2}-3+7\epsilon}(uv) (wz)\|_{L^1_t L^2_x}  \lesssim \| \Lambda^{\frac{n}{2}-3+7\epsilon}(uv)\|_{L^{\infty}_t L^{\frac{2n}{n-2}}_x} \|w\|_{L^2_t L^{2n}_x} \|z\|_{L^2_t L^{2n}_x} \\
& \hspace{1em} 
\lesssim 
\| \Lambda^{\frac{n}{2}-2+7\epsilon}(uv)\|_{L^{\infty}_t L^2_x}  \|w\|_{H^{\frac{n}{2}-1,\half+\epsilon}} \|z\|_{H^{\frac{n}{2}-1,\half+\epsilon}} \\
& \hspace{1em} \lesssim 
\|u\|_{L^{\infty}_t H^{\frac{n}{2}-1+4\epsilon}_x}  \|v\|_{L^{\infty}_t H^{\frac{n}{2}-1+4\epsilon}_x} \|w\|_{H^{\frac{n}{2}-1,\half+\epsilon}} \|z\|_{H^{\frac{n}{2}-1,\half+\epsilon}} \\
& \hspace{1em} \lesssim \|u\|_{H^{\frac{n}{2}-1+4\epsilon,\half+\epsilon}} \|v\|_{H^{\frac{n}{2}-1+4\epsilon,\half+\epsilon}} \|w\|_{H^{\frac{n}{2}-1,\half+\epsilon}} \|z\|_{H^{\frac{n}{2}-1,\half+\epsilon}} 
\end{align*}
by Prop. \ref{SML} and Prop. \ref{Prop.1.2}. \\
2. The estimate for the second term is as follows
\begin{align*}
 &\| \Lambda^{-1}(uv)\Lambda^{\frac{n}{2}-2+7\epsilon} (wz)\|_{L^1_t L^2_x} \\
& \lesssim \| \Lambda^{-1}(uv)\|_{L^1_t L^{\infty}_x} \|\Lambda^{\frac{n}{2}-2+7\epsilon} (wz)\|_{L^{\infty}_t L^2_x} \\
&  \lesssim \|uv\|_{L^1_t L^{n+\epsilon}_x} (\|\Lambda^{\frac{n}{2}-2+2\epsilon} w\|_{L^{\infty}_t L^{\frac{2n}{n-2}}_x} \| z\|_{L^{\infty}_t L^n_x} + \| w\|_{L^{\infty}_t L^n_x} \|\Lambda^{\frac{n}{2}-2+7\epsilon} z\|_{L^{\infty}_t L^{\frac{2n}{n-2}}_x}) \\
& \lesssim \|u\|_{L^2_t L^{2(n+\epsilon)}_x}  \|v\|_{L^2_t L^{2(n+\epsilon)}_x} \|w\|_{H^{\frac{n}{2}-1+2\epsilon,\half+\epsilon}}  \|z\|_{H^{\frac{n}{2}-1+2\epsilon,\half+\epsilon}} \\
& \lesssim \|u\|_{H^{\frac{n}{2}-1+O(\epsilon),\half+\epsilon}}  \|v\|_{H^{\frac{n}{2}-1+O(\epsilon),\half+\epsilon}}  \|w\|_{H^{\frac{n}{2}-1+7\epsilon,\half+\epsilon}}  \|z\|_{H^{\frac{n}{2}-1+7\epsilon,\half+\epsilon}} \, ,
\end{align*}
where we used Sobolev and Strichartz as before.
\end{proof}

\begin{proof}[Proof of (\ref{40})]
We have to show
$$ \|uv wz \|_{H^{r-1,-\half+2\epsilon}} \lesssim \|u\|_{H^{s,\half+\epsilon}} 
\|v\|_{H^{s,\half+\epsilon}} \|w\|_{H^{s,\half+\epsilon}} \|z\|_{H^{s,\half+\epsilon}} \, . $$
By our assumption $2s-r > \frac{n}{2}$ the left hand side is bounded by the term \\
$\|uv wz \|_{H^{2s-\frac{n}{2}-1-\epsilon,-\half+2\epsilon}}$ . It suffices to prove the remaining estimate for the (minimal) value $s= \frac{n}{2}-\frac{7}{8}$ . By Cor. \ref{Cor.1.1} and Prop. \ref{SML} we obtain
\begin{align*}
\|uv wz \|_{H^{\frac{n}{2}-\frac{11}{4}-\epsilon,-\half+2\epsilon}} & \lesssim \|uv\|_{H^{\frac{n}{2}-\frac{15}{8}+O(\epsilon),\half+\epsilon}} \| wz \|_{H^{\frac{n}{2}-\frac{11}{8}+O(\epsilon),0}} \\
& \lesssim \|u\|_{H^{\frac{n}{2}-\frac{7}{8},\half+\epsilon}}    \|v\|_{H^{\frac{n}{2}-\frac{7}{8},\half+\epsilon}} \| w\|_{H^{\frac{n}{2}-\frac{7}{8},\half+\epsilon}}  \|z \|_{H^{\frac{n}{2}-\frac{7}{8},\half+\epsilon}}
\end{align*}
\end{proof}

\section{ Proof of the nonlinear estimates in the case $n=3$}

\begin{proof}[Proof of (\ref{28})]
 We recall (\ref{41'}) for $\alpha = \epsilon$ :
\begin{align*}
Q_0(u,v) &\precsim  D_-^{1-\epsilon} (D_+ u D_+^{\epsilon} v) +   D_-^{1-\epsilon} (D_+^{\epsilon} u D_+ v)   \\
  &  \hspace{1em}  + (D_+ D_-^{1-\epsilon} u)(D_+^{\epsilon} v) + (D_+^{\epsilon} u)(D_+ D_-^{1-\epsilon} v) \, .
\end{align*}
Thus we have to show the following estimates and remark that we only have to consider the first and third term, because the last two terms are equivalent by symmetry. \\
1. For the first term it suffices to show
\begin{align*}
 &\| \Lambda_+^{-1} \Lambda_-^{-1+\epsilon} \Lambda^{r-1} \Lambda_+  D_-^{1-\epsilon}(D_+ u D_+^{\epsilon} v) \|_{H^{0,\half+\epsilon}} \\
&\hspace{1em}\lesssim \| \Lambda^{s-1} \Lambda_+ u \|_{H^{0,\half+\epsilon}} \| \Lambda^{s-1} \Lambda_+ v \|_{H^{0,\half+\epsilon}} \,. 
\end{align*}
This follows from
$$ \|uv\|_{H^{r-1,\half+\epsilon}} \lesssim \|u\|_{H^{s-1,\half+\epsilon}} \|v\|_{H^{s-\epsilon,\half+\epsilon}} \, , $$
which is a consequence of Prop. \ref{Prop.1.2'} under our conditions $s\ge r$ , $2s-r > \frac{3}{2}$ and $s-1+s-\epsilon > \half+\epsilon$ , thus here we need $s > \frac{3}{4}$ .  \\
2. For the second term we show
\begin{align*}
& \| \Lambda_+^{-1} \Lambda_-^{-1+\epsilon} \Lambda^{r-1} \Lambda_+ (D_+ D_-^{1-\epsilon}u D_+^{\epsilon} v) \|_{H^{0,\half+\epsilon}}
\lesssim \| \Lambda^{s-1} \Lambda_+ u \|_{H^{0,\half+\epsilon}} \| \Lambda^{s-1} \Lambda_+ v \|_{H^{0,\half+\epsilon}} \,.
\end{align*}
Thus it suffices to show
$$\|uv\|_{H^{r-1,-\half+2\epsilon}} \lesssim \|u\|_{H^{s-1,-\half+2\epsilon}} \|v\|_{H^{s-\epsilon,\half+\epsilon}} \, $$
which is a consequence of Prop. \ref{Prop.1.2'} as in 1. under the same assumptions. 
\end{proof}

\begin{proof}[Proof of (\ref{27})]
 We use (\ref{42'}).
Thus we have to show the following estimates and remark that we only have to consider the first two terms, because the last two terms are equivalent by symmetry. \\
1. For the first term it suffices to show
\begin{align*}
 &\| \Lambda_+^{-1} \Lambda_-^{-1+\epsilon} \Lambda^{r-1} \Lambda_+ \Lambda_+^{\half-2\epsilon} \Lambda_-^{\half-2\epsilon}(\Lambda_+^{\half+2\epsilon} u  \Lambda_+^{\half+2\epsilon} v) \|_{H^{0,\half+\epsilon}} \\
&\hspace{1em}\lesssim \| \Lambda^{s-1} \Lambda_+ u \|_{H^{0,\frac{3}{4}+\epsilon}} \| \Lambda^{s-1} \Lambda_+ v \|_{H^{0,\frac{3}{4}+\epsilon}} \,. 
\end{align*}
This follows from
$$ \|uv\|_{H^{r-\half-2\epsilon,0}} \lesssim \|u\|_{H^{s-\half-2\epsilon,\frac{3}{4}+\epsilon}} \|v\|_{H^{s-\half-2\epsilon,\frac{3}{4}+\epsilon}} \, , $$
which is a consequence of Prop. \ref{Prop.1.2'} with parameters $s_0=\half-r+2\epsilon$ , $s_1 = s_2 = s-\half-2\epsilon$ , $b_0 =0$ , $b_1=b_2=\frac{3}{4}+\epsilon$ , so that $s_0+s_1+s_2 > 1 $ , if $2s-r > \frac{3}{2}$ , and $s_0+s_1+s_2+s_1+s_2 > \frac{3}{2}$,  if $4s-r > \frac{11}{4}$ , which holds under our assumptions. \\
2. For the second term we show
\begin{align*}
& \| \Lambda_+^{-1} \Lambda_-^{-1+\epsilon} \Lambda^{r-1} \Lambda_+ \Lambda_+^{\half-2\epsilon} (\Lambda_+^{\half+2\epsilon} \Lambda_-^{\half-2\epsilon}u \Lambda_+^{\half+2\epsilon} v) \|_{H^{0,\half+\epsilon}} \\
&\lesssim \| \Lambda^{s-1} \Lambda_+ u \|_{H^{0,\frac{3}{4}+\epsilon}} \| \Lambda^{s-1} \Lambda_+ v \|_{H^{0,\frac{3}{4}+\epsilon}} \,.
\end{align*}
Using $\Lambda_-^{4\epsilon}u \precsim \Lambda_+^{4\epsilon}u$ it suffices to show
$$\|uv\|_{H^{r-\half+2\epsilon,-\half-2\epsilon}} \lesssim \|u\|_{H^{s-\half-2\epsilon,\frac{1}{4}}} \|v\|_{H^{s-\half-2\epsilon,\frac{3}{4}+\epsilon}} \, $$
which is a consequence of Prop. \ref{Prop.1.2'} as in 1., if $2s-r > \frac{3}{2}$ and $3s-2r > \frac{7}{4}$ , which holds under our assumptions.
\end{proof}

\begin{proof}[Proof of (\ref{32})]
As before it is easy to see that we can reduce to
$$ \|uv\|_{H^{s-1,-\frac{1}{4}+2\epsilon}} \lesssim \|u\|_{H^{r+1,\half+\epsilon}} \|v\|_{H^{r,\half+\epsilon}}\,. $$
This is a consequence of Prop. \ref{Prop.1.2'}. One easily checks that it can be applied  under the conditions $s \le r+1$ , $2r-s > -1$ , $4r-s>-\frac{7}{4}$ and $3r-2s>-2$ , all of which are satisfied under our assumptions.
\end{proof}

\begin{proof}[Proof of (\ref{24}) and (\ref{25})]
We have to prove
$$ \|\Lambda_+^{-1} \Lambda_-^{-1+\epsilon} Q(u,v)\|_{H^{s,\frac{3}{4}+\epsilon}} \lesssim \|\Lambda_+ u\|_{H^{s,\frac{3}{4}+\epsilon}} \|\Lambda_+ v\|_{H^{s,\frac{3}{4}+\epsilon}} \, . $$
By (\ref{42''}) we reduce to the following estimates:
\begin{align*}
\|uv\|_{H^{s-\half+2\epsilon,\frac{1}{4}}} & \lesssim \|u\|_{H^{s+\half-2\epsilon,\frac{3}{4}+\epsilon}} \|v\|_{H^{s-\half-2\epsilon,\frac{3}{4}+\epsilon}} \, , \\
\|uv\|_{H^{s-\half+2\epsilon,-\frac{1}{4}+2\epsilon}} & \lesssim \|u\|_{H^{s+\half-2\epsilon,\frac{1}{4}+3\epsilon}} \|v\|_{H^{s-\half-2\epsilon,\frac{3}{4}+\epsilon}} \, , \\
\|uv\|_{H^{s-\half+2\epsilon,-\frac{1}{4}+2\epsilon}} & \lesssim \|u\|_{H^{s+\half-2\epsilon,\frac{3}{4}+\epsilon}} \|v\|_{H^{s-\half-2\epsilon,\frac{1}{4}+3\epsilon}} \, .
\end{align*}
We apply Proposition \ref{Prop.1.2'}. It is easy to check that the first and second estimate require the assumption $s > \frac{3}{4}$ and the third estimate the assumption $s > \half$ .
\end{proof}

\begin{proof}[Proof of (\ref{29})]
It is sufficient to show
\begin{equation}
\label{G1}
\| \Gamma^1(u,v) \|_{H^{s-1,-\frac{1}{4}+2\epsilon}} \lesssim \|u\|_{H^{s,\frac{3}{4}+\epsilon}} \|v\|_{H^{s-1,\frac{3}{4}+\epsilon}}  \, .
\end{equation}
We use Lemma \ref{Lemma2.1}.\\
a. We first consider $\Gamma_2^1(u,v)$ . By (\ref{44}) it suffices to show the following estimates, all of which are consequences of Proposition \ref{Prop.1.2'}. 
\begin{align*}
\|uv\|_{H^{s-\half-2\epsilon,\frac{1}{4}}} & \lesssim \|u\|_{H^{s+\half-2\epsilon,\frac{3}{4}+\epsilon}} \|v\|_{H^{s-\half-2\epsilon,\frac{3}{4}+\epsilon}} \, ,\\
\|uv\|_{H^{s-1,-\frac{1}{4}+2\epsilon}} & \lesssim \|u\|_{H^{s,\frac{1}{4}}} \|v\|_{H^{s-\half-2\epsilon,\frac{3}{4}+\epsilon}} \, , \\
\|uv\|_{H^{s-1,-\frac{1}{4}+2\epsilon}} & \lesssim \|u\|_{H^{s+\half-2\epsilon,\frac{3}{4}+\epsilon}} \|v\|_{H^{s-1,\frac{1}{4}}} \, . 
\end{align*}
b. Assume that $u$ and $v$ have frequencies $ \ge 1$ ,  so that $\Lambda_+^{\alpha} u \sim D_+^{\alpha}u$ . In this case we use (\ref{45'}) and consider $\Gamma^1_1(u,v)$ . By (\ref{43}) we reduce to 
\begin{align*}
\|uv\|_{H^{s-\half-2\epsilon,\frac{1}{4}}} & \lesssim \|u\|_{H^{s+\half-2\epsilon,\frac{3}{4}+\epsilon}} 
\|v\|_{H^{s-\half-2\epsilon,\frac{3}{4}+\epsilon}} \, , \\
 \|uv\|_{H^{s-\half-2\epsilon,-\frac{1}{4}+2\epsilon}}& \lesssim \| u \|_{H^{s+\half-2\epsilon,\frac{1}{4}+\epsilon}} \|v \|_{H^{s-\half-2\epsilon,\frac{3}{4}+\epsilon}} \, , \\
\|uv\|_{H^{s-\half-2\epsilon,-\frac{1}{4}+2\epsilon}} & \lesssim \|u\|_{H^{s+\half-2\epsilon,\frac{3}{4}+\epsilon}} 
\|v\|_{H^{s-\half-2\epsilon,\frac{1}{4}}} \, .
\end{align*}
These estimates follow from Proposition \ref{Prop.1.2'}, which requires $s > \frac{3}{4}$. \\  
c. Consider $(\Lambda^{-2}u) v$ and $u (\Lambda^{-2}v)$ . It suffices to show
\begin{align*}
\| (\Lambda^{-2}  u)v\|_{H^{s-1,-\frac{1}{4}+2\epsilon}} &\lesssim \|u\|_{H^{s,\frac{3}{4} + \epsilon}} \|v\|_{H^{s-1,\frac{3}{4}+\epsilon}} \, , \\
\|   u(\Lambda^{-2}v)\|_{H^{s-1,-\frac{1}{4}+2\epsilon}} &\lesssim \|u\|_{H^{s,\frac{3}{4} + \epsilon}} \|v\|_{H^{s-1,\frac{3}{4}+\epsilon}}\, .
\end{align*}
Both follow easily from Prop. \ref{SML} under our asumption $s > \half$ . \\
d. Let us now consider the case where the frequencies of $u$ or $v$ are $\le 1$. We use (\ref{45''}) instead of (\ref{45'}). Because $\Gamma^1_2(u,v)$ has already been handled, we only have to consider $uv$ .
If $u$ has low frequencies we obtain by Prop. \ref{SML}:
\begin{align*}
\| uv\|_{H^{s-1,-\frac{1}{4}+2\epsilon}} & \lesssim \|u\|_{H^{\frac{3}{2}+,\frac{3}{4}+\epsilon}} \|v\|_{H^{s-1,\frac{3}{4}+\epsilon}} \\
& \lesssim \|u\|_{H^{s,\frac{3}{4}+\epsilon}} \|v\|_{H^{s-1,\frac{3}{4}+\epsilon}} \, .
\end{align*}
Similarly we treat the case where $v$ has low frequencies. 
\end{proof}

\begin{proof}[Proof of (\ref{26})]
We may reduce to
$$ \|\Lambda_+^{-1} \Lambda_-^{\epsilon -1} \Lambda_+ Q(u,v)\|_{H^{r,\half+\epsilon}} \lesssim \|\Lambda u \|_{H^{s,\frac{3}{4}+\epsilon}} \|\Lambda_+ v\|_{H^{r-1,\half+\epsilon}} \, . $$ 
Now we use (\ref{42'})  and estimate the three terms as follows: \\
1. The estimate for the first term is reduced to (using the trivial estimate $\Lambda_-^{2\epsilon} u \precsim  \Lambda_+^{2\epsilon } u $) :
$$ \|uv\|_{H^{r-\half+2\epsilon,0}} \lesssim \|u\|_{H^{s+\half-2\epsilon,\frac{3}{4}+\epsilon}} \| v \|_{H^{r-\half+2\epsilon,\half+\epsilon}} \, , $$
which follows from Prop. \ref{Prop.1.2'}. \\
2. The estimate for the last term reduces to
$$ \|uv\|_{H^{r-\half+2\epsilon,-\half+2\epsilon}} \lesssim \|u\|_{H^{s+\half-2\epsilon,\frac{3}{4}+\epsilon}} \|v\|_{H^{r-\half +2\epsilon,0}} \, , $$
which is also a consequence of Prop. \ref{Prop.1.2'} under our assumption $2s-r > \frac{3}{2}$ . \\
3. The second term reduces to
$$ \|uv\|_{H^{r-\half+2\epsilon,-\half+2\epsilon}} \lesssim \|u\|_{H^{s+\half-2\epsilon,\frac{1}{4}}}    \|v\|_{H^{r-\half+2\epsilon,\half+2 \epsilon}} \, , $$ 
which requires $s > \frac{3}{4}$ .
\end{proof}

\begin{proof}[Proof of (\ref{30}) and (\ref{31})]
 We need the following estimate
$$
\|uv\|_{H^{s-1,-\frac{1}{4}+\epsilon}}  \lesssim \|u\|_{H^{s,\frac{3}{4}+\epsilon}} \|v\|_{H^{s+1,\frac{3}{4}+\epsilon}} \, ,$$
which easily follows from the Sobolev multiplication law (Prop. \ref{SML}), because $(1-s)+s+(s+1) = s+2 > \frac{3}{2}$ , and (\ref{31}) is treated in the same way. 
\end{proof}

\begin{proof}[Proof of (\ref{33}) and (\ref{34})]
(\ref{33}) reduces to the following estimate
$$ \|uv\|_{H^{r-1,-\half+\epsilon}} \lesssim \|u\|_{H^{s+2,\frac{3}{4}+\epsilon}} \|v\|_{H^{r-1,\half+\epsilon}} \, , $$
which follows from Prop. \ref{SML}, because $s+2 > \frac{3}{2}$ . Similarly (\ref{34}) reduces to
$$ \|uv\|_{H^{r-1,-\half+\epsilon}} \lesssim \|u\|_{H^{s+1,\frac{3}{4}+\epsilon}} \|v\|_{H^{s-1,\frac{3}{4}+\epsilon}} \, , $$
which also holds by Sobolev, where we use our assumption $2s-r+1 > \frac{5}{2}$ .
\end{proof}

\begin{proof}[Proof of (\ref{35})]
We reduce to
$$ \|uvw\|_{H^{s-1,-\frac{1}{4}+2\epsilon}} \lesssim \|u\|_{H^{r+1,\half+\epsilon}} \|vw\|_{H^{r,0}}  \lesssim \|u\|_{H^{r+1,\half+\epsilon}} \|v\|_{H^{s,\frac{3}{4}+\epsilon}} \|w\|_{H^{s,\frac{3}{4}+\epsilon}} \, , $$
which follows from Prop. \ref{Prop.1.2'}, where we use $2r-s > -1$ and $r \ge s-1$ for the first step and $2s-r > \frac{3}{2}$ for the second step.
\end{proof}

\begin{proof}[Proof of (\ref{36})]
We may reduce to
\begin{align*}
 \| u \Lambda^{-1} (vw)\|_{H^{s-1,-\frac{1}{4}+2\epsilon}} & \lesssim \|u\|_{H^{r,\half+\epsilon}} \|\Lambda^{-1} (vw)\|_{H^{s+\half,0}} \\
& \lesssim  \|u\|_{H^{r,\half+\epsilon}} \|v\|_{H^{s,\frac{3}{4}+\epsilon}} \|w\|_{H^{s,\frac{3}{4}+\epsilon}} \, ,
\end{align*}
by our assumption $2r-s > -1$ ,
where we use Prop. \ref{Prop.1.2'} twice .
\end{proof}

\begin{proof}[Proof of (\ref{38})]
 It suffices to consider the case $s=\frac{3}{4}$ . We easily obtain the desired estimate by Prop. \ref{Prop.1.2'}:
$$
 \|uvw\|_{H^{-\frac{1}{4},-\frac{1}{4}+2\epsilon}}  \lesssim \|u\|_{H^{\frac{3}{4},\frac{3}{4}+\epsilon}} \|vw\|_{H^{\frac{1}{4},0}} 
\lesssim \|u\|_{H^{\frac{3}{4},\frac{3}{4}+\epsilon}}  \|v\|_{H^{\frac{3}{4},\frac{3}{4}+\epsilon}}  \|w\|_{H^{\frac{3}{4},\frac{3}{4}+\epsilon}} \, . 
$$
\end{proof}

\begin{proof}[Proof of (\ref{39})]
We obtain by Prop. \ref{Prop.1.2'}:
$$ \|uvw\|_{H^{r-1,-\half+2\epsilon}} \lesssim \|uv\|_{H^{0+,0}}\|w\|_{H^{r,\half+\epsilon}} \lesssim \|u\|_{H^{s,\frac{3}{4}+\epsilon}} \|v\|_{H^{s,\frac{3}{4}+\epsilon}} \|w\|_{H^{r,\half+\epsilon}} \, . $$
\end{proof}

\begin{proof}[Proof of (\ref{37})]
We have to show
$$ \| \Lambda^{-1} (uv) wz \|_{H^{s-1,-\frac{1}{4}+2\epsilon}} \lesssim \|u\|_{H^{s,\frac{3}{4}+\epsilon}} 
\|v\|_{H^{s,\frac{3}{4}+\epsilon}} \|w\|_{H^{s,\frac{3}{4}+\epsilon}} \|z\|_{H^{s,\frac{3}{4}+\epsilon}} \, . $$
It suffices to consider the minimal value $s=\frac{3}{4}$ , which by Proposition \ref{Prop.1.2'}  can be estimated as follows:
\begin{align*}
&\| \Lambda^{-1} (uv) wz \|_{H^{-\frac{1}{4},-\frac{1}{4}+2\epsilon}}  \lesssim \| \Lambda^{-1} (uv)  \|_{H^{\frac{3}{4}+,\half+\epsilon}} \| wz  \|_{H^{0,0}} \\
& \hspace{1em}\lesssim \| u \|_{H^{\frac{3}{4},\frac{3}{4}+\epsilon}} \|v  \|_{H^{\frac{3}{4},\frac{3}{4}+\epsilon}} \| w\|_{H^{\frac{3}{4},\frac{3}{4}+\epsilon}}  \|z  \|_{H^{\frac{3}{4},\frac{3}{4}+\epsilon}} \, . 
\end{align*}
\end{proof}

\begin{proof}[Proof of (\ref{40})]
We have to show
$$ \|uv wz \|_{H^{r-1,-\half+2\epsilon}} \lesssim \|u\|_{H^{s,\frac{3}{4}+\epsilon}} 
\|v\|_{H^{s,\frac{3}{4}+\epsilon}} \|w\|_{H^{s,\frac{3}{4}+\epsilon}} \|z\|_{H^{s,\frac{3}{4}+\epsilon}} \, . $$
By our assumption $2s-r > \frac{3}{2}$ the left hand side is bounded by the term \\
$\|uv wz \|_{H^{2s-\frac{5}{2}-\epsilon,-\half+2\epsilon}}$ . It suffices to prove the remaining estimate for the (minimal) value $s= \frac{3}{4}$ . By Proposition \ref{Prop.1.2'} we obtain
\begin{align*}
\|uv wz \|_{H^{-1,-\half+2\epsilon}} & \lesssim \|uv\|_{H^{-\frac{1}{4},\half+\epsilon}} \| wz \|_{H^{\frac{3}{8},0}} \\
& \lesssim \|u\|_{H^{\frac{3}{4},\frac{3}{4}+\epsilon}}    \|v\|_{H^{\frac{3}{4},\frac{3}{4}+\epsilon}} \| w\|_{H^{\frac{3}{4},\frac{3}{4}+\epsilon}}  \|z \|_{H^{\frac{3}{4},\frac{3}{4}+\epsilon}} \, .
\end{align*}
\end{proof}

\section{Appendix: Proof of Proposition \ref{Prop.3.8}}

The proof relies on the following fundamental result by Foschi and Klainerman.
\begin{prop}
\label{FK}
Let $n \ge 2$ . Assume $s_0+s_1+s_2+b_0 = \frac{n-1}{2}$ , $ b_0 < \frac{n-3}{4}$ , $s_0 < \frac{n-1}{2}$ , $s_1,s_2 \le \frac{n-1}{2}-b_0$ , $s_1+s_2 \ge \half$ . Then the following estimate holds:
$$ \|D_-^{-b_0} (uv) \|_{\dot{H}^{-s_0,0}} \lesssim \|u\|_{\dot{H}^{s_1,\half+\epsilon}} \|v\|_{\dot{H}^{s_2,\half+\epsilon}} \, . $$
\end{prop}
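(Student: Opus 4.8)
The plan is to reconstruct the Foschi--Klainerman argument, reducing the $X^{s,b}$--type bilinear estimate to a sharp geometric estimate for the overlap of two thickened light cones. First I would dualize and dyadically decompose: writing the estimate in Fourier variables as a trilinear form over $\{\xi_0+\xi_1+\xi_2=0,\ \tau_0+\tau_1+\tau_2=0\}$, I localize each spatial frequency to $|\xi_i|\sim N_i$ and each distance to the cone $\langle|\tau_i|-|\xi_i|\rangle\sim L_i$, separate into the usual interaction types (high--high to low, $N_0\ll N_1\sim N_2$; high--low; low--high), and within each into subcases according to which of $L_0,L_1,L_2$ is largest. The constraint $\tau_0+\tau_1+\tau_2=0$ together with the three cone relations forces $\max(L_0,L_1,L_2)$ to dominate a resonance quantity comparable, up to frequency normalization, to $\min(|\xi_1|,|\xi_2|)$ times the square of the angle between $\xi_1$ and $\pm\xi_2$ (the sign depending on the choice of signs in the two free evolutions). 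This is the null structure, and it is exactly what makes the weight $L_0^{-b_0}$ usable: on the relevant set it is bounded by a negative power of that angle.

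Second, the core is the sharp dyadic estimate. By the transfer/superposition principle (possible since the exponents on the right are $\half+\epsilon>\half$) it suffices to treat, for each choice of signs, free waves $u=e^{\pm_1 it|D|}f$, $v=e^{\pm_2 it|D|}g$, with the $L_1,L_2$ sums reinstated afterwards using the remaining $\epsilon$ room; in the subcases where an input modulation dominates, one instead places the $D_-$-weight on that input and applies a plain bilinear Strichartz bound obtained by Hölder from Proposition~\ref{Str}. In the main subcase (output modulation dominating), Plancherel turns $\|\chi_{L_0}P_{N_0}(u_{N_1}v_{N_2})\|_{L^2}^2$ into an integral of $|\widehat{u_{N_1}}|^2|\widehat{v_{N_2}}|^2$ against the measure of the set of $(\xi_1,\xi_2)$ on the two spheres (thickened by $L_1,L_2$) with $\xi_1+\xi_2$ in a fixed ball of radius $\sim N_0$; this overlap is estimated by the coarea formula in angular coordinates, and converting $L_0^{-b_0}$ into an angular weight there gives the sharp gain. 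The hypotheses $b_0<\tfrac{n-3}{4}$, $s_0<\tfrac{n-1}{2}$, $s_1,s_2\le\tfrac{n-1}{2}-b_0$, $s_1+s_2\ge\half$, together with the scaling identity $s_0+s_1+s_2+b_0=\tfrac{n-1}{2}$, are precisely what make the resulting sums over $N_0$ (and over $N_1\sim N_2$ in the high--high case) and over the $L_i$ converge; the strict inequalities and the $\epsilon$ in $\half+\epsilon$ supply the geometric surplus and absorb the logarithmic loss at the diagonal $N_0\sim N_1\sim N_2$ with comparable modulations.

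I expect the main obstacle to be the high--high to low interaction, $N_0\ll N_1\sim N_2$, near the threshold $b_0=\tfrac{n-3}{4}$: there the overlap measure estimate is sharp, the null form gives no improvement beyond the angular smallness already forced by $|\xi_1+\xi_2|\sim N_0$, and the condition $s_1+s_2\ge\half$ is exactly what is needed to control the concentration of the two inputs on a common small cap. Tracking the endpoint here is where Foschi--Klainerman's geometric measure computation is used in its strongest form. Once the homogeneous estimate of Proposition~\ref{FK} is established, deducing Proposition~\ref{Prop.3.8} --- passing to inhomogeneous spaces, restricting to a finite time interval, and shifting to exponents $\half-2\epsilon$ in one factor of \eqref{b} (which costs only a harmless regularity shift and uses the inequality $s_0+s_1+s_2\ge\tfrac n2+\epsilon$ in place of the exact homogeneous scaling) --- is routine.
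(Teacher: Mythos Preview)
Your plan is a reasonable high-level sketch of how Foschi and Klainerman actually prove their Theorem~1.1, and the ingredients you name --- dyadic localization in frequency and modulation, reduction via the transfer principle to free waves, the cone-overlap/coarea estimate, and the role of each hypothesis in closing the sums --- are the right ones. In that sense the approach is sound, though at this level of detail it is a programme rather than a proof; carrying out the high--high endpoint analysis rigorously is genuinely delicate.

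However, the paper does something entirely different and far simpler: it does not reprove the estimate at all. The paper's proof of this proposition is a single sentence citing \cite{FK}, Theorem~1.1, and invoking the transfer principle to pass from the free-wave bilinear estimate there to the $\dot H^{s,b}$ formulation stated here. So the comparison is: you are reconstructing the contents of \cite{FK}, whereas the paper treats that result as a black box. Your route is self-contained but orders of magnitude longer; the paper's route is appropriate since Proposition~\ref{FK} is a known result being quoted, not a new contribution. Also note that your final paragraph about deducing Proposition~\ref{Prop.3.8} is outside the scope of the statement you were asked to prove; the paper handles that deduction separately in the appendix.
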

\begin{proof} 
This immediately follows from \cite{FK}, Theorem 1.1 by the transfer principle.
\end{proof}

\begin{proof}[Proof of Proposition \ref{Prop.3.8}]
We first prove (\ref{a}) in the case $s_0+s_1+s_2 =\frac{n}{2} + \epsilon$ .
By Proposition  \ref{FK} we obtain 
$$ \|D_-^{\half+\epsilon} (uv) \|_{\dot{H}^{-s_0,0}} \lesssim \|u\|_{\dot{H}^{s_1,\half+\epsilon}} \|v\|_{\dot{H}^{s_2,\half+\epsilon}} \, . $$

In the case $s_0,s_1,s_2 \ge 0$ this immediately implies
$$ \|D_-^{\half+\epsilon} (uv) \|_{H^{-s_0,0}} \lesssim \|u\|_{{H}^{s_1,\half+\epsilon}} \|v\|_{{H}^{s_2,\half+\epsilon}} \, . $$
Combining this with the estimate 
$$ \|uv\|_{H^{-s_0,0}} \lesssim \|u\|_{H^{s_1,\half+\epsilon}} \|v\|_{H^{s_2,\half+\epsilon}} \, , $$
which holds by Proposition \ref{SML} we obtain (\ref{a}).

Next consider the case $s_0 < 0$ . The estimate
$$ \|D_-^{\half+\epsilon} ((D^{-s_0}u)v)\|_{H^{0,0}} \lesssim \|u\|_{\dot{H}^{s_1,\half+\epsilon}} \|v\|_{\dot{H}^{s_2,\half+\epsilon}} $$ 
is equivalent to 
$$ \|D_-^{\half+\epsilon} (uv)\|_{H^{0,0}} \lesssim \|u\|_{\dot{H}^{s_1+s_0,\half+\epsilon}} \|v\|_{\dot{H}^{s_2,\half+\epsilon}} \, ,$$ 
which holds by Proposition \ref{FK}. Using $s_1+s_0 \ge 0$ , $s_2\ge s_0+s_2\ge 0$ this implies
$$ \|D_-^{\half+\epsilon} (uv)\|_{H^{0,0}} \lesssim \|u\|_{{H}^{s_1+s_0,\half+\epsilon}} \|v\|_{{H}^{s_2,\half+\epsilon}} \, .$$ 
By Proposition \ref{SML} we obtain
$$ \|uv\|_{H^{0,0}} \lesssim \|u\|_{H^{s_1+s_0,\half+\epsilon}} \|v\|_{H^{s_2,\half+\epsilon}} \, , $$
so that
$$ \|uv\|_{H^{0,\half+\epsilon}} \lesssim  \|u\|_{H^{s_1+s_0,\half+\epsilon}} \|v\|_{H^{s_2,\half+\epsilon}}$$
and therefore
$$ \|(\Lambda^{-s_0} u) v\|_{H^{0,\half+\epsilon}} \lesssim \|u\|_{H^{s_1,\half+\epsilon}} \|v\|_{H^{s_2,\half+\epsilon}} \, . $$
Similarly 
$$ \|u(\Lambda^{-s_0} v) \|_{H^{0,\half+\epsilon}} \lesssim \|u\|_{H^{s_1,\half+\epsilon}} \|v\|_{H^{s_2,\half+\epsilon}} \, , $$
so that (\ref{a}) follows by the fractional Leibniz rule.

Next consider the case $s_1 < 0$ (in the same way the case $s_2 < 0$ can be treated). The estimate
$$ \| (D^{-s_1} v) w \|_{H^{0,-\half-\epsilon}} \lesssim \|D_-^{-\half-\epsilon} v\|_{\dot{H}^{s_0,0}} \|w\|_{\dot{H}^{s_2,\half+\epsilon}} $$
is equivalent to
$$ \|  v w \|_{H^{0,-\half-\epsilon}} \lesssim \|D_-^{-\half-\epsilon} v\|_{\dot{H}^{s_1+s_0,0}} \|w\|_{\dot{H}^{s_2,\half+\epsilon}} \, .$$ By duality this is equivalent to
$$ \| D_-^{\half+\epsilon}(uv)\|_{\dot{H}^{-s_1-s_0,0}} \lesssim \|u\|_{H^{0,\half+\epsilon}} \|v\|_{\dot{H}^{s_2,\half+\epsilon}} \, , $$
which holds by Proposition \ref{FK}. Using $s_1+s_0 \ge 0$ and $s_2 \ge 0$ this immediately implies
$$ \| D_-^{\half+\epsilon}(uv)\|_{H^{-s_1-s_0,0}} \lesssim \|u\|_{H^{0,\half+\epsilon}} \|v\|_{{H}^{s_2,\half+\epsilon}} \, . $$
Because  the estimate
$$ \|uv\|_{H^{-s_1-s_0,0}} \lesssim \|u\|_{H^{0,\half+\epsilon}} \|v\|_{H^{s_2,\half+\epsilon}} $$
holds by Proposition \ref{SML} we obtain
$$ \|uv\|_{H^{-s_1-s_0,\half+\epsilon}} \lesssim \|u\|_{H^{0,\half+\epsilon}} \|v\|_{H^{s_2,\half+\epsilon}} \, .$$
By duality this is equivalent to
$$ \|vw\|_{H^{0,-\half-\epsilon}} \lesssim \|v\|_{H^{s_1+s_0,-\half-\epsilon}} \|w\|_{H^{s_2,\half+\epsilon}} \, , $$
thus
\begin{equation}
\label{l1}
 \|(\Lambda^{-s_1} v)w\|_{H^{0,-\half-\epsilon}} \lesssim \|v\|_{H^{s_0,-\half-\epsilon}} \|w\|_{H^{s_2,\half+\epsilon}} \, ,
 \end{equation}
Similarly the estimate
$$ \| v(D^{-s_1} w)  \|_{H^{0,-\half-\epsilon}} \lesssim \|D_-^{-\half-\epsilon} v\|_{\dot{H}^{s_0,0}} \|w\|_{\dot{H}^{s_2,\half+\epsilon}} $$
is equivalent to
$$ \|  v w \|_{H^{0,-\half-\epsilon}} \lesssim \|D_-^{-\half-\epsilon} v\|_{\dot{H}^{s_0,0}} \|w\|_{\dot{H}^{s_1+s_2,\half+\epsilon}} \, .$$
By duality this is equivalent to
$$ \|D_-^{\half+\epsilon}(uv)\|_{\dot{H}^{-s_0,0}} \lesssim \|u\|_{H^{0,\half+\epsilon}} \|v\|_{\dot{H}^{s_1+s_2,\half+\epsilon}} \, , $$
which is valid by Proposition \ref{FK}. Using $s_0 \ge 0$ and $s_1+s_2 \ge 0$ this immediately implies
$$ \|D_-^{\half+\epsilon}(uv)\|_{{H}^{-s_0,0}} \lesssim \|u\|_{H^{0,\half+\epsilon}} \|v\|_{{H}^{s_1+s_2,\half+\epsilon}} \, , $$
which combined with Proposition \ref{SML}, which gives the estimate
$$\|uv\|_{H^{-s_0,0}} \lesssim \|u\|_{H^{0,\half+\epsilon}} \|v\|_{H^{s_1+s_2,\half+\epsilon}} \, , $$
implies
$$ \|uv\|_{H^{-s_0,\half+\epsilon}} \lesssim \|u\|_{H^{0,\half+\epsilon}} \|v\|_{H^{s_1+s_2,\half+\epsilon}} \, . $$
By duality this is equivalent to
$$ \|vw\|_{H^{0,-\half-\epsilon}} \lesssim \|v\|_{H^{s_0,-\half-\epsilon}} \|w\|_{H^{s_1+s_2,\half+\epsilon}} $$
and also
\begin{equation}
\label{l2}
\|v \Lambda^{-s_1}w\|_{H^{0,-\half-\epsilon}} \lesssim \|v\|_{H^{s_0,-\half-\epsilon}} \|w\|_{H^{s_2,\half+\epsilon}} \, .
\end{equation}
(\ref{l1}) and (\ref{l2}) imply by the fractional Leibniz rule
$$\|vw\|_{H^{-s_1,-\half-\epsilon}} \lesssim \|v\|_{H^{s_0,-\half-\epsilon}} \|w\|_{H^{s_2,\half+\epsilon}} \, . $$
This is by duality our desired estimate (\ref{a}).

Consider now the more general case $s_0+s_1+s_2 \ge \frac{n}{2}+\epsilon$ . It is not difficult to see that in the case of (homogeneous) $H^{s,b}$-spaces the inequality (\ref{a}) may be reduced to the special case considered before.

Moreover a simple application of Proposition \ref{SML} shows that
\begin{equation}
\label{d}
\|uv\|_{H^{-s_0,0}} \lesssim \|u\|_{H^{s_1,0}} \|v\|_{H^{s_2,\half+\epsilon}} \, .
\end{equation}
Estimate (\ref{b}) now follows by interpolation between  (\ref{a}) and (\ref{d}).

\end{proof}

\end{document}